\newtheorem{theorem}{Theorem}[section]
\newtheorem{proposition}[theorem]{Proposition}
\newtheorem{lemma}[theorem]{Lemma}
\newtheorem{corollary}[theorem]{Corollary}
\theoremstyle{definition}
\newtheorem{definition}[theorem]{Definition}
\newtheorem{remark}[theorem]{Remark}
\newtheorem{example}[theorem]{Example}
\newcounter{bean}
\newenvironment{letterlist}{\begin{list}{\rm ({\alph{bean}})}
      {\usecounter{bean}\setlength{\rightmargin}{\leftmargin}}}
      {\end{list}}
\newenvironment{romanlist}{\begin{list}{\rm ({\roman{bean}})}
      {\usecounter{bean}\setlength{\rightmargin}{\leftmargin}}}
      {\end{list}}
\newcommand{\seqm}[3]{\ensuremath{#1\stackrel{#2}
 {\longrightarrow}#3}}
\newcommand{\seqmm}[5]{\ensuremath{#1\stackrel{#2}
 {\longrightarrow}#3\stackrel{#4}{\longrightarrow}#5}}
\newcommand{\seqmmm}[7]{\ensuremath{#1\stackrel{#2}
 {\longrightarrow}#3\stackrel{#4}{\longrightarrow}#5
  \stackrel{#6}{\longrightarrow}#7}}
\newcommand{\larrow}{\relbar\!\!\relbar\!\!\rightarrow}
\newcommand{\llarrow}{\relbar\!\!\relbar\!\!\larrow}
\newcommand{\llnamedright}[3]{\ensuremath{#1\stackrel{#2}
 {\llarrow}#3}}
\newcommand{\llnamedddright}[7]{\ensuremath{#1\stackrel{#2}
 {\llarrow}#3\stackrel{#4}{\llarrow}#5
  \stackrel{#6}{\llarrow}#7}}
\newcommand{\paren}[1]{\ensuremath{\left\{ #1 \right\}}}
\newcommand{\sbracket}[1]{\ensuremath{\left[#1\right]}}
\newcommand{\cplus}[3]{\displaystyle\bigoplus^{#2}_{#1}#3}
\newcommand{\csum}[3]{\displaystyle\sum^{#2}_{#1}#3}
\newcommand{\qqed}{\hfill\square}
\newcommand{\mc}[1]{\ensuremath{\mathcal{#1}}}
\newcommand{\mb}[1]{\ensuremath{\mathbb{#1}}}
\newcommand{\Z}{\ensuremath{\mathbb{Z}}}
\newcommand{\ID}{\ensuremath{\mathbbm{1}}}
\def\dim{\mathrm{dim}}
\begin{document}

\title[Loop space homotopy types] 
         {The loop space homotopy type of simply-connected four-manifolds 
         and their generalizations} 
\author{Piotr Beben}
\address{School of Mathematics, University of Southampton,  
        Southampton SO17 1BJ, United Kingdom} 
\email{P.D.Beben@soton.ac.uk} 
\author{Stephen Theriault}
\address{School of Mathematics, University
         of Southampton, Southampton SO17 1BJ, United Kingdom}
\email{S.D.Theriault@soton.ac.uk}

\subjclass[2010]{Primary 55P35, 57N65, Secondary 57P10}
\keywords{manifold, loop space, homotopy decomposition}

\begin{abstract} 
We determine loop space decompositions of simply-connected four-manifolds, 
$(n-1)$-connected $2n$-dimensional manifolds provided $n\notin\{4,8\}$, 
and connected sums of products of two spheres. These are obtained as 
special cases of a more general loop space decomposition of certain 
torsion-free $CW$-complexes with well-behaved skeleta and some  
Poincar\'{e} duality features.   
\end{abstract}

\maketitle 

\section{Introduction} 

The topology of simply-connected four-manifolds is a subject 
of widespread and enduring interest. They have been classified up to 
homotopy type by Milnor~\cite{Mi} and up to homeomorphism type 
by Freedman~\cite{F}. Their classification up to diffeomorphism 
type is one of the great unsolved questions in modern mathematics, 
with significant advances achieved by Donaldson~\cite{D} and 
Seiberg and Witten~\cite{Wi}. They have also been studied in view 
of their connections to other areas of mathematics, such as 
knot theory~\cite{FS} and symplectic geometry~\cite{P}. 

The homotopy theory of simply-connected four-manifolds has 
continued to attract considerable attention since Milnor's classification. 
For example, given simply-connected four-manifolds $M$ and~$N$, 
Cochran and Habegger~\cite{CH} calculated the group of 
self-homotopy equivalences of $M$; Zhao, Gao and Su~\cite{ZGS} 
calculated the homotopy classes of maps $[M,N]$; and Baues~\cite{B} 
has written a monograph entirely devoted to investigating the 
homotopy theory of $M$, $N$ and the maps between them. 

In another direction, Wall~\cite{Wa} initiated the study of $(n-1)$-connected 
$2n$-dimensional manifolds as a generalization of simply-connected 
four-manifolds. Such manifolds have received considerable recent attention 
as certain families of them arise as intersections of quadrics in geometric 
topology and moment-angle manifolds in toric topology~\cite{BM,GL}. 
Another variation is connected sums of products of two spheres, which 
generalizes the sub-collection of simply-connected four-manifolds that are 
connected sums of $S^{2}\times S^{2}$. Such connected sums appear 
in the classification by McGavran~\cite{Mc} of $n$-torus actions on closed, 
compact, simply-connected $(n+2)$-manifolds, and they also appear as 
intersections of quadrics and moment-angle manifolds~\cite{BM,GL}. 

In this paper we study simply-connected four-manifolds and their 
generalizations from a new perspective. Let $M$ be a simply-connected manifold. 
Let $\Omega M$ be the space of continuous basepoint preserving maps from 
the circle to~$M$, called the \emph{(based) loop space} of $M$. When $M$ 
is a simply-connected four-manifold, an $(n-1)$-connected $2n$-manifold, 
or a connected sum of  products of two spheres, we aim to give an explicit, 
integral homotopy decomposition of $\Omega M$ as a product of simpler factors. 

Decomposing the loops on large classes of manifolds has long been thought 
to be too hard to do. However, the methods used in the paper are relatively 
accessible and flexible. Essentially, the starting input is information about 
the integral cohomology of $M$ derived from Poincar\'{e} duality. This is then 
manipulated by creating appropriate homotopy fibrations involving $M$ which 
allow one to apply decomposition methods from homotopy theory, in the 
spirt of~\cite{CMN}. It should be the case that the same methods can be 
used to investigate the loops on other classes of manifolds. 

Such decompositions are useful and to illustrate this, we give three examples. 
First, in toric topology one associates to a simplicial complex $K$ a space 
called a moment-angle complex. If $K$ is a simple polytope then 
this moment-angle complex is a manifold. For example, if $K$ is 
an $n$-gon then the moment-angle complex is a connected sum of 
products of two spheres~\cite{BM,GL}. The combinatorics of the 
polytope and the geometry of the manifold are deeply connected, 
but the relationship is not well understood. Decomposing the loops 
on such connected sums and relating the factors to the 
combinatorics of the polytope should be insightful. 
Second, string topology is concerned with 
properties of the free loop space $\Lambda M$ of $M$: the space of 
continuous unbased maps from the circle to $M$. There is a fibration 
\(\seqmm{\Omega M}{}{\Lambda M}{e}{M}\) 
where $e$ evaluates a map at the basepoint, and $e$ has a section. 
The section implies that 
$\pi_{m}(\Lambda M)\cong\pi_{m}(M)\oplus\pi_{m}(\Omega M)$ 
for $m\geq 2$, so the homotopy groups of~$\Lambda M$ can be 
determined to the same extent as those of the factors of $\Omega M$. 
This has implications for counting the geodesics on $M$ (see~\cite{SV}),  
and the decomposition of $\Omega M$ may help clarify the 
homology and cohomology of $\Lambda M$. The third application  
is to configuration spaces, which will be discussed in more detail 
in Section~\ref{sec:config}. Let $F(M,k)$ be the configuration space of 
ordered $k$-tuples of distint points in the product space $M^{k}$. 
In certain cases, for example if $M$ is a product of two non-trivial 
manifolds, Cohen and Gitler~\cite{CG} showed that $\Omega M$ is a 
factor of $\Omega F(M,k)$. A decomposition for $\Omega M$ further 
refines this, and allows for the calculation of a significant subgroup of 
the homotopy groups of the configuration space. 

To present our results, we start with a classification theorem. 
Assume that homology is taken with integral 
coefficients and use the symbol ``$\simeq$" to denote a homotopy equivalence. 
By a connected sum of sphere products, we mean a connected sum of 
products of two spheres. 

\begin{theorem} 
   \label{classify} 
   The following hold: 
   \begin{letterlist} 
      \item if $M$ and $N$ are simply-connected four-manifolds, then 
               $\Omega M\simeq\Omega N$ if and only if 
               $H^{2}(M)\cong H^{2}(N)$; 
      \item if $M$ and $N$ are $(n-1)$-connected $2n$-dimensional manifolds  
                and $n\notin\{2,4,8\}$, then $\Omega M\simeq\Omega N$ if and only if 
                $H^{n}(M)\cong H^{n}(N)$; 
      \item if $M$ and $N$ are $n$-dimensional connected sums of sphere 
                products, then $\Omega M\simeq\Omega N$ if and only if 
                 $H^{m}(M)\cong H^{m}(N)$ for each $m<n$. 
   \end{letterlist} 
\end{theorem}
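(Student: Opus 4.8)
The plan is to derive all three parts from the general loop space decomposition established in the body of the paper, together with elementary identifications of the low-dimensional (co)homology of $\Omega M$. For the ``if'' directions, the point is that the manifolds in question are torsion-free in the range of degrees that matters, so that the cohomology groups named in (a)--(c) are recorded up to isomorphism by their ranks alone. Indeed, a simply-connected four-manifold has $H_{1}(M)=0$, so Poincar\'{e} duality and the universal coefficient theorem force $H_{2}(M)\cong H^{2}(M)$ to be free; an $(n-1)$-connected $2n$-manifold likewise has $H^{n}(M)$ free; and an $n$-dimensional connected sum of sphere products $M$ satisfies $M\setminus\{\ast\}\simeq\bigvee_{j}\bigl(S^{p_{j}}\vee S^{q_{j}}\bigr)$, so every $H^{m}(M)$ with $m<n$ is free. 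Since the general decomposition exhibits $\Omega M$ as a product of explicit factors depending only on these ranks (equivalently, on the homology of $M$, which is torsion-free and concentrated in the expected degrees), equality of the relevant cohomology groups yields $\Omega M\simeq\Omega N$. For part~(c) one uses in addition that, because $p_{j}+q_{j}=n$ for every $j$, the multiset $\{(p_{j},q_{j})\}_{j}$ is recoverable from the ranks of $H^{m}(M)$ with $m\le n/2$, so that matching all $H^{m}(M)$ for $m<n$ indeed forces matching decomposition data.

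For the ``only if'' direction of (a) and (b), I would recover the single relevant cohomology group as a homotopy invariant of $\Omega M$. Writing $n=2$ in the four-manifold case, $M$ is $(n-1)$-connected, hence $\Omega M$ is $(n-2)$-connected, and the Hurewicz theorem gives $H_{n-1}(\Omega M)\cong\pi_{n-1}(\Omega M)\cong\pi_{n}(M)$; a second application of Hurewicz identifies $\pi_{n}(M)\cong H_{n}(M)$, and Poincar\'{e} duality together with universal coefficients gives $H_{n}(M)\cong H^{n}(M)$. Thus $H^{n}(M)$ is determined up to isomorphism by the homotopy type of $\Omega M$, so $\Omega M\simeq\Omega N$ forces $H^{n}(M)\cong H^{n}(N)$. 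Note this argument makes no use of the restriction $n\notin\{4,8\}$ in (b); that hypothesis is needed only for the ``if'' direction, since it is precisely the range in which the general decomposition is available.

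The ``only if'' direction of (c) is the substantive case, as one must recover $H^{m}(M)$ for \emph{all} $m<n$ simultaneously. Here I would use that $M$ has the homotopy type of $\bigl(\bigvee_{j}(S^{p_{j}}\vee S^{q_{j}})\bigr)\cup e^{n}$, so the inclusion of the wedge is roughly $(n-1)$-connected and its looping roughly $(n-2)$-connected; consequently $H_{\ast}(\Omega M)$ agrees, in degrees below $n-2$, with the homology of the loop space of the wedge, which is the tensor algebra on generators in degrees $p_{j}-1$ and $q_{j}-1$. Inverting the standard relation between the Poincar\'{e} series of a tensor algebra and that of its space of generators then recovers $\mathrm{rank}\,H_{m}(M)$ for $m<n-1$, and Poincar\'{e} duality supplies the remaining ranks through $\mathrm{rank}\,H_{m}(M)=\mathrm{rank}\,H_{n-m}(M)$; the low-dimensional cases $n\le 3$, in which the class is essentially trivial, are treated directly. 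Hence $\Omega M\simeq\Omega N$ forces $H^{m}(M)\cong H^{m}(N)$ for every $m<n$. I expect the main obstacle to lie exactly in this last step: converting the heuristic that ``$\Omega M$ resembles the loop space of a wedge of spheres in low degrees'' into a rigorous recovery of the integral Betti numbers of $M$ over a whole range of degrees --- in effect, extracting from the general decomposition enough of a uniqueness statement that its highly-connected correction factors cannot perturb the homology in the degrees that carry the cohomological information.
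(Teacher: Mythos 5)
Your proposal is correct and follows essentially the paper's own route: the ``if'' directions are exactly the decomposition theorems (Theorems~\ref{1conndecomp}, \ref{Walldecomp} and~\ref{connsum}), and the ``only if'' directions, which the paper leaves implicit, are obtained from loop-space invariants just as you describe (for (a) and (b) via $\pi_{n-1}(\Omega M)\cong\pi_n(M)\cong H_n(M)$, with the $\{2,4,8\}$ exclusion indeed only needed for the decomposition). The step you flag as the main obstacle in (c) does go through: the skeletal inclusion $\bigvee_j(S^{p_j}\vee S^{q_j})\to M$ is $(n-1)$-connected, so $H_m(\Omega M)$ for $m\leq n-3$ agrees with the Bott--Samelson tensor algebra on generators of degrees $p_j-1,\,q_j-1\leq n-3$, and inverting the Poincar\'e series inductively recovers every Betti number of $M$ in degrees $\leq n-2$, while simple connectivity, Poincar\'e duality and torsion-freeness supply $H^{n-1}$ and convert ranks into the stated cohomology isomorphisms.
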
 

Observe that in each case, the homotopy type of $\Omega M$ depends 
only on the cohomology of $M$, regarded as a $\mathbb{Z}$-module, 
in degrees strictly less than the dimension of $M$. This contrasts with 
the situation before looping. For example, Milnor~\cite{Mi} proved that 
two simply-connected four-manifolds $M$ and $N$ are homotopy 
equivalent if and only if $M$ and $N$ have isomorphic cohomology 
rings. Theorem~\ref{classify} states that after looping the ring structure 
in cohomology plays no role, only the rank in degree~$2$ cohomology 
does. So looping considerably simplifies the homotopy types. This is 
interesting because $\Omega M$ has the same homotopy 
groups as $M$, just shifted down one dimension. We therefore 
immediately obtain the following corollary. 

\begin{corollary} 
   \label{htpygroups} 
   The following hold: 
   \begin{letterlist} 
      \item if $M$ and $N$ are simply-connected four-manifolds, then 
               $\pi_{\ast}(M)\cong\pi_{\ast}(N)$ if and only if 
               $H^{2}(M)\cong H^{2}(N)$; 
      \item if $M$ and $N$ are $(n-1)$-connected $2n$-dimensional manifolds  
                and $n\notin\{2,4,8\}$, then $\pi_{\ast}(M)\cong\pi_{\ast}(N)$ if and only if 
                $H^{n}(M)\cong H^{n}(N)$; 
      \item if $M$ and $N$ are $n$-dimensional connected sums of sphere 
                products, then $\pi_{\ast}(M)\cong\pi_{\ast}(N)$ if and only if 
                 $H^{m}(M)\cong H^{m}(N)$ for each $m<n$.
   \end{letterlist} 
   $\qqed$ 
\end{corollary}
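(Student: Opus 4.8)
The plan is to obtain Corollary~\ref{htpygroups} from Theorem~\ref{classify} together with standard facts about homotopy groups, handling the ``if'' and ``only if'' directions separately. For the ``if'' direction in all three cases: if the displayed isomorphism of cohomology groups holds, then Theorem~\ref{classify} provides a homotopy equivalence $\Omega M\simeq\Omega N$; since $M$ and $N$ are connected and $\pi_{m}(X)\cong\pi_{m-1}(\Omega X)$ for every $m\geq 1$, this equivalence induces isomorphisms $\pi_{m}(M)\cong\pi_{m}(N)$ for all $m$, that is $\pi_{\ast}(M)\cong\pi_{\ast}(N)$.

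For the ``only if'' direction the idea is that the relevant cohomology groups are already recoverable from the graded homotopy groups, so that Theorem~\ref{classify} is not even needed. In case~(a), $M$ and $N$ are simply connected, so the Hurewicz theorem gives $\pi_{2}(M)\cong H_{2}(M)$ and $\pi_{2}(N)\cong H_{2}(N)$; since $H_{2}$ of a simply-connected four-manifold is free abelian, the universal coefficient theorem converts $\pi_{2}(M)\cong\pi_{2}(N)$ into $H^{2}(M)\cong H^{2}(N)$. Case~(b) is identical with the degree $2$ replaced by $n$: $(n-1)$-connectivity lets Hurewicz identify $\pi_{n}$ with $H_{n}$, which is free abelian by Poincar\'{e} duality, and the universal coefficient theorem again upgrades $\pi_{n}(M)\cong\pi_{n}(N)$ to $H^{n}(M)\cong H^{n}(N)$.

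Case~(c) is where the actual work lies, and I expect it to be the main obstacle. One may assume $M$ and $N$ are simply connected; then the $(n-1)$-skeleton of an $n$-dimensional connected sum of sphere products is homotopy equivalent to a wedge of simply-connected spheres $W=\bigvee_{j}(S^{p_{j}}\vee S^{q_{j}})$ with $2\leq p_{j}\leq q_{j}\leq n-2$, and attaching the remaining cells, all of dimension $\geq n$, changes no $\pi_{m}$ with $m\leq n-2$; hence $\pi_{m}(M)\cong\pi_{m}(W)$ and $\pi_{m}(N)\cong\pi_{m}(W')$ for $m\leq n-2$, where $W'$ is the corresponding wedge for $N$. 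Rationalizing, $\pi_{\ast}(\Omega W)\otimes\mathbb{Q}$ is the free graded Lie algebra on generators $x_{j}$ of degree $p_{j}-1$ and $y_{j}$ of degree $q_{j}-1$, one pair for each $j$; since the generators of a free graded Lie algebra are precisely its indecomposables, the dimensions of $\pi_{m}(W)\otimes\mathbb{Q}$ for $m\leq n-2$ determine, degree by degree, the multiset $\{p_{j},q_{j}\}_{j}$, hence the rank of $H_{m}(W)$ for every $m<n$. An isomorphism $\pi_{\ast}(M)\cong\pi_{\ast}(N)$ therefore forces $W$ and $W'$ to have the same sphere dimensions with the same multiplicities; since $H_{m}(M)\cong H_{m}(W)$ is free for $0<m<n$, with $H^{m}(M)\cong\operatorname{Hom}(H_{m}(M),\mathbb{Z})$ for such $m$ and $H^{n-1}(M)=H_{n-1}(M)=0$ by Poincar\'{e} duality, this yields $H^{m}(M)\cong H^{m}(N)$ for all $m<n$.

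The delicate points in case~(c) are exactly the structural inputs invoked above: that these manifolds are torsion-free and that their skeleta split as wedges of spheres, both consequences of the Poincar\'{e} duality setup underlying the paper, and the passage to rational homotopy to linearize the extraction of Betti numbers. If one prefers to avoid rationalizing, the same extraction can be carried out integrally by iterating the Hurewicz theorem over the skeleta of $W$ together with the Hilton--Milnor theorem, at the cost of tracking the Whitehead-product and torsion contributions to each $\pi_{m}(W)$ alongside the free part coming from the $m$-dimensional spheres; this is more laborious but routine, and it is the only place in the argument where anything beyond a direct appeal to Theorem~\ref{classify}, the Hurewicz theorem, and the universal coefficient theorem is required.
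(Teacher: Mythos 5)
Your proposal is correct, and it is worth recording how it relates to the paper's treatment: the paper gives no written argument at all, declaring Corollary~\ref{htpygroups} to be immediate from Theorem~\ref{classify} together with the shift $\pi_{m}(X)\cong\pi_{m-1}(\Omega X)$, and that reasoning literally covers only the ``if'' direction, which is exactly your first paragraph. The ``only if'' direction is what you have added. For (a) and (b) it is the routine Hurewicz-plus-universal-coefficients observation that $\pi_{2}$ (resp.\ $\pi_{n}$) already recovers $H^{2}$ (resp.\ $H^{n}$), since $H_{2}$ (resp.\ $H_{n}$) is free by Poincar\'{e} duality and universal coefficients; this is surely what the authors regarded as implicit. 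For (c), however, your argument is a genuine supplement that the paper nowhere records: you pass to the $(n-1)$-skeleton $W$, a wedge of simply-connected spheres of dimensions between $2$ and $n-2$, note $\pi_{m}(M)\cong\pi_{m}(W)$ for $m\leq n-2$, and recover the multiset of sphere dimensions from the ranks of the rational homotopy groups using the free graded Lie algebra $\pi_{\ast}(\Omega W)\otimes\mathbb{Q}$; the degree-by-degree recovery of generators is valid because decomposables in a given degree involve only generators of strictly smaller degree, and all generators lie in degrees at most $n-3$, within the range you control, after which freeness of $H_{\ast}(M)$ below the top degree converts the count into the stated cohomology isomorphisms. So the two treatments coincide on the direction the paper actually argues, and your rational-homotopy (or, equivalently, Hilton--Milnor counting) step supplies a correct proof of the converse, which is the only place any substance is required.
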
  

Part~(a) of Corollary~\ref{htpygroups} reproves a theorem of Duan and Liang~\cite{DL}
via more homotopy theoretic methods, while parts~(b) and~(c) generalize it to wider classes of manifolds.
Another proof of Part~(a), again using more geometric techniques, is given in a recent preprint by Basu and Basu~\cite{BB}.
In fact, they show that the result holds for stable homotopy groups in place of homotopy groups. 
It would be interesting to see whether this also holds for the generalizations presented here.

Theorem~\ref{classify} is proved by decomposing $\Omega M$ into 
a product of spaces, up to homotopy. Explicitly, we have the following. 

\begin{theorem} 
   \label{1conndecomp} 
   Let $M$ be a simply-connected four-manifold and suppose that 
   $\mbox{dim}\,H^{2}(M)=k$. If $k=0$ then $M\simeq S^{4}$, if $k=1$ 
   then $\Omega M\simeq S^{1}\times\Omega S^{5}$, and if $k\geq 2$ 
   then there is a homotopy equivalence 
   \[\Omega M\simeq S^1\times\Omega(S^{2}\times S^{3}) 
                  \times\Omega(J\vee (J\wedge \Omega(S^{2}\times S^{3})))\] 
   where $J=\bigvee_{i=1}^{k-1} (S^{2}\vee S^{3})$ if $k>2$ and $J=\ast$ if $k=2$. 
\end{theorem}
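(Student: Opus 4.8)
The plan is to realize $M$ as the top cell of a Poincaré duality complex attached to a wedge of low-dimensional spheres, and to split that attaching map after looping. Since $M$ is a simply-connected four-manifold with $\dim H^2(M)=k\geq 2$, the classification of Milnor gives that $M$ has a CW-structure with one $0$-cell, $k$ cells in dimension $2$, some cells in dimension $3$, and one $4$-cell. By Poincaré duality the number of $3$-cells equals $k$, so the $3$-skeleton is $X=\bigvee_{i=1}^{k}(S^2_i\vee S^3_i)$ up to the attaching maps of the $3$-cells; using the intersection form one can in fact reorganize so that $X\simeq\bigvee_{i=1}^{k}(S^2_i\vee S^3_i)$ and $M$ is obtained from $X$ by attaching a single $4$-cell along a map $f\colon S^3\to X$ whose components under Poincaré duality pair the $i$-th $S^2$ with the $i$-th $S^3$. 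Writing $X_1=S^2_1\vee S^3_1$ and $J=\bigvee_{i=2}^{k}(S^2_i\vee S^3_i)$, the key structural observation is that the $4$-cell is attached only using the ``$X_1$-part'' of $X$ together with a Whitehead-product correction, so that up to homotopy $M$ fits into a homotopy cofibration where the cofibre of $X_1\hookrightarrow M$ is the suspension-like space $\Sigma(J\wedge\Omega(\text{something}))$ after one loops.

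First I would establish the $k=1$ case directly: here $M$ is $S^2$ with a $4$-cell attached along a map of Hopf invariant $\pm1$ (so $M\simeq\mathbb{CP}^2$) or of Hopf invariant $0$; in either case $M$ is the cofibre of a map $S^3\to S^2$ that is an iterate of the Hopf map, and looping the cofibration $S^2\to M\to S^4$ together with the standard fibration $S^1\to S^3\to S^2$ gives $\Omega M\simeq S^1\times\Omega S^5$ by a James-type argument. (When $M\simeq\mathbb{CP}^2$ one uses the fibration $S^1\to S^5\to\mathbb{CP}^2$.) For $k\geq 2$, the strategy is to apply the main general loop space decomposition theorem of the paper (the one advertised in the abstract, which I am allowed to cite as an earlier result) to $M$: $M$ is torsion-free, its skeleta are wedges of spheres, and it has the Poincaré duality feature that the top cell is dual to the bottom cell. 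That theorem should yield a homotopy equivalence $\Omega M\simeq\Omega Y_1\times\Omega(C)$ where $Y_1$ is the ``rank-one piece'' built from $S^2_1\vee S^3_1$ and the top cell — which I claim is exactly $S^2\times S^3$ after looping, because $S^2\times S^3$ is itself the total space of such a minimal Poincaré complex — and $C$ is built from the remaining wedge summand $J$ twisted by $\Omega Y_1$.

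The heart of the argument is identifying the two factors precisely. For the first factor: the subcomplex $S^2_1\cup(\text{3-cell})\cup(\text{4-cell})$ of $M$ is, by the intersection-form normalization, a four-manifold with $\dim H^2=1$ that additionally contains a free $S^3$, and one checks it is homotopy equivalent to $S^2\times S^3\,\#\,(\text{sphere})$, which reduces to $S^2\times S^3$; hence $\Omega$ of it is $\Omega(S^2\times S^3)$, and peeling off the bottom $S^1\subset\Omega S^3$ via $S^1\to S^3\to S^2$ produces the $S^1\times\Omega(S^2\times S^3)$ part. For the second factor: the general theorem describes the complementary piece as $\Omega$ of the cofibre of the inclusion of the rank-one piece, and a Mayer--Vietoris / homology computation identifies that cofibre with $J\vee(J\wedge\Omega(S^2\times S^3))$ — the wedge summand $J$ records the ``extra'' $2$- and $3$-cells and the smash term records their interaction with the free $S^3$-factor through the attaching map of the $4$-cell, exactly as in the Cohen--Moore--Neisendorfer-style splittings the introduction alludes to. Assembling: $\Omega M\simeq S^1\times\Omega(S^2\times S^3)\times\Omega\bigl(J\vee(J\wedge\Omega(S^2\times S^3))\bigr)$, with $J=\bigvee_{i=1}^{k-1}(S^2\vee S^3)$ for $k>2$ and $J=\ast$ for $k=2$, as claimed.

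The main obstacle will be the precise homotopy-theoretic bookkeeping in the $k\geq 2$ case: verifying that after the intersection-form normalization the attaching map of the $4$-cell really does decompose compatibly with the wedge splitting $X\simeq X_1\vee J$ (up to the Whitehead-product terms that the general theorem is designed to absorb), so that the hypotheses of the general decomposition theorem genuinely apply and the complementary cofibre is the stated wedge-with-smash. Everything else — the $k=0,1$ cases and the identification $\Omega(S^2\times S^3\,\#\,\cdots)\simeq S^1\times\Omega(S^2\times S^3)$ — is comparatively routine once that structural input is in hand.
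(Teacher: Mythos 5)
Your argument for $k\geq 2$ rests on a false structural claim about $M$: a simply-connected closed four-manifold has $H_3(M)\cong H^1(M)=0$, so up to homotopy it has one $0$-cell, $k$ two-cells and a single $4$-cell, and its $3$-skeleton is $\bigvee_{i=1}^{k}S^2$, not $\bigvee_{i=1}^{k}(S^2\vee S^3)$ --- there are no $3$-cells for Poincar\'e duality to count (duality pairs $H^2$ with $H^2$ here, not $H^2$ with $H^3$). Consequently the ``rank-one piece'' $S^2_1\cup(\mbox{3-cell})\cup(\mbox{4-cell})$ does not exist inside $M$, and the claim that a subcomplex of the $4$-dimensional complex $M$ is homotopy equivalent to the $5$-dimensional manifold $S^2\times S^3$ is dimensionally impossible. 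Moreover, even with the correct skeleton $\bigvee^k S^2$ the general decomposition theorem cannot be applied directly to $M$: this is exactly the excluded case $m=n-m=2$ of Proposition~\ref{C1}, because of possible nonzero cup-product squares --- e.g.\ for $\#^{k}\mathbb{C}P^2$ the intersection form is definite, so one cannot split off an $S^2\vee S^2$ whose cofibre has the cohomology ring of $S^2\times S^2$, which is the hypothesis the general theorem needs. Your $k=1$ case also admits a Hopf-invariant-zero attaching map, which contradicts Poincar\'e duality; for $k=1$ the attaching map must have Hopf invariant $\pm1$.

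The paper's proof circumvents precisely this obstruction by an intermediate construction that your outline is missing, and it is where the $S^2\times S^3$ and the $S^2\vee S^3$ summands actually come from. One chooses $x_k\in H^2(M)$ with $x_kx_{\bar k}=z$ (Lemma~\ref{LPD}), represents it by a map $q\colon M\to \mathbb{C}P^\infty$, and takes $Z$ to be the homotopy fibre, so that $S^1\to Z\to M$ is a principal-type fibration. By Quinn's theorem $Z$ is a simply-connected $5$-dimensional Poincar\'e duality complex; a Serre spectral sequence computation (Lemma~\ref{Zprops}) shows it has torsion-free homology and its $4$-skeleton is a wedge of copies of $S^2\vee S^3$. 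Since now $m=2<n-m=3$, there are no cup squares and Proposition~\ref{C1} applies to decompose $\Omega Z$; finally the adjoint of a map $S^2\to M$ dual to $x_k$ shows $\Omega q$ has a right homotopy inverse, whence $\Omega M\simeq S^1\times\Omega Z$. Without this (or some comparable) passage to the $5$-dimensional complex $Z$, the route you propose does not go through.
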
 

\begin{theorem} 
   \label{Walldecomp} 
   Let $M$ be an $(n-1)$-connected $2n$-dimensional manifold and 
   suppose that \mbox{$\mbox{dim}\,H^{n}(M)=k$}. If $k\geq 2$ then there 
   is a homotopy equivalence 
   \[\Omega M\simeq\Omega(S^{n}\times S^{n})\times 
        \Omega (J\vee(J\wedge\Omega(S^{n}\times S^{n})))\] 
   where $J=\bigvee_{i=1}^{k-2} S^{n}$. 
\end{theorem}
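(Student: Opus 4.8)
The plan is to obtain Theorem~\ref{Walldecomp} as an instance of the paper's general loop space decomposition, with the manifold-theoretic input supplied by Wall's structure theory~\cite{Wa} and Poincar\'{e} duality. First I reformulate the target. For a co-$H$-space $J$ and any space $Y$ the pinch map $J\vee Y\to Y$ has a section and homotopy fibre the half-smash $J\rtimes\Omega Y\simeq J\vee(J\wedge\Omega Y)$, so $\Omega(J\vee Y)\simeq\Omega Y\times\Omega\bigl(J\vee(J\wedge\Omega Y)\bigr)$; taking $J=\bigvee_{i=1}^{k-2}S^{n}$ and $Y=S^{n}\times S^{n}$, and using that $(S^{n}\times S^{n})\vee\bigl(\bigvee_{i=1}^{k-2}S^{n}\bigr)=\bigl(\bigvee_{i=1}^{k}S^{n}\bigr)\cup_{[\iota_{1},\iota_{2}]}e^{2n}$, the conclusion of Theorem~\ref{Walldecomp} becomes equivalent to
\[
  \Omega M\;\simeq\;\Omega\Bigl(\bigl(\textstyle\bigvee_{i=1}^{k}S^{n}\bigr)\cup_{[\iota_{1},\iota_{2}]}e^{2n}\Bigr).
\]
So it suffices to show that, after looping, the attaching map of the top cell of $M$ can be replaced by the single Whitehead product $[\iota_{1},\iota_{2}]$.

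The structural input is this. By Wall's classification of $(n-1)$-connected $2n$-manifolds~\cite{Wa}, and classically for $n=2$, one has $M\simeq\overline{M}\cup_{f}e^{2n}$ with $\overline{M}=\bigvee_{i=1}^{k}S^{n}$ the punctured manifold and $f\in\pi_{2n-1}(\overline{M})$; by the Hilton--Milnor theorem $f=\sum_{i}\iota_{i}\!\circ\! a_{i}+\sum_{i<j}\lambda_{ij}[\iota_{i},\iota_{j}]$ in this range, where $a_{i}\in\pi_{2n-1}(S^{n})$, $\lambda_{ij}\in\mathbb{Z}$, and the $\lambda_{ij}$ together with the Hopf invariants of the $a_{i}$ constitute the intersection matrix, which Poincar\'{e} duality forces to be unimodular. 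I would exploit two consequences: unimodularity, which after a self-homotopy equivalence of $\overline{M}$ (a change of basis of $H_{n}(M)$) lets me assume $\lambda_{12}=1$ and, more generally, supplies for any primitive class a dual class pairing to $1$ with it; and the concentration of $H^{*}(M)$ in degrees $0,n,2n$, so that for $n\ge 3$ the only obstruction to realizing a class $x\in H^{n}(M)$ by a map $M\to S^{n}$ lies in $H^{2n}(M;\pi_{2n-1}(S^{n}))\cong\pi_{2n-1}(S^{n})$ and is detected by the associated $a_{i}$.

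The engine is then the standard one in this circle of ideas. Choosing $x_{1}$, and via the dual pairing $x_{2}$, so that the corresponding class lifts to a map $g\colon M\to S^{n}\times S^{n}$ which is degree one onto each factor on two of the bottom spheres, the looped map $\Omega g$ acquires a section, whence $\Omega M\simeq\Omega(S^{n}\times S^{n})\times\Omega F$ with $F=\mathrm{hofib}(g)$. Over $\overline{M}$ the homotopy fibre of the relevant pinch map is explicitly computable; attaching the lift of $f$ --- whose leading term $[\iota_{1},\iota_{2}]$ is exactly the attaching map of the top cell of $S^{n}\times S^{n}$ --- then cuts $F$ down, up to attaching data that dies under $\Omega$, to $\mathrm{hofib}\bigl((\bigvee_{i=1}^{k-2}S^{n})\vee(S^{n}\times S^{n})\to S^{n}\times S^{n}\bigr)\simeq\bigl(\bigvee_{i=1}^{k-2}S^{n}\bigr)\rtimes\Omega(S^{n}\times S^{n})$, and re-assembling via the first paragraph yields the theorem. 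When the intersection form is definite, so that no suitable $x_{1}$ exists, one runs the same argument after replacing $g$ by an auxiliary fibration over $M$ --- the higher-dimensional analogue of the circle bundle classified by $x_{1}$ that appears in the four-manifold case.

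The main obstacle is verifying that the ``surplus'' attaching data of $f$ --- the off-diagonal $\lambda_{ij}$ with $(i,j)\neq(1,2)$, and above all the diagonal classes $a_{i}\in\pi_{2n-1}(S^{n})$ --- contributes nothing to $\Omega M$. The cross Whitehead products are relatively harmless: $[\iota_{1},\iota_{2}]$ is nullhomotopic in $S^{n}\times S^{n}$, and the remaining ones lift into the half-smash summand of $F$, where, after looping, they become commutators that can be untangled. The delicate point is the diagonal classes, which in general cannot be removed by any change of basis --- for instance for the odd intersection forms occurring in connected sums of $\mathbb{CP}^{2}$'s or $\mathbb{HP}^{2}$'s --- and must instead be shown to become nullhomotopic only after the relevant inclusions are looped. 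This is precisely the step where the fine structure of $\pi_{2n-1}(S^{n})$ intervenes, and where the exceptional dimensions $n\in\{4,8\}$, with their Hopf-invariant-one elements $\nu$ and $\sigma$, are most subtle; granting the requisite identification of $F$ in the remaining cases, the decomposition $\Omega M\simeq\Omega(S^{n}\times S^{n})\times\Omega\bigl(J\vee(J\wedge\Omega(S^{n}\times S^{n}))\bigr)$ with $J=\bigvee_{i=1}^{k-2}S^{n}$ follows.
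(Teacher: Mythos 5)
Your overall skeleton (split $\Omega M$ over a rank-two target with the cohomology of $S^{n}\times S^{n}$, then identify the fibre) matches the paper's, and your opening reduction via the collapse map $(S^{n}\times S^{n})\vee J\to S^{n}\times S^{n}$ is correct. But the proposal has a genuine gap at exactly the step that carries the theorem. Your engine requires a map $g\colon M\to S^{n}\times S^{n}$ realizing two dual degree-$n$ classes, and, as you yourself note, the obstruction to realizing a class $x^{*}$ by a map $M\to S^{n}$ is the corresponding diagonal component $a_{i}\in\pi_{2n-1}(S^{n})$ of the attaching map. These $a_{i}$ are in general nonzero and cannot be normalized away by a change of basis (they need not even be visible to cohomology: torsion classes with vanishing Hopf invariant already obstruct the existence of $g$), so the map $g$ you build the whole argument on need not exist. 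Your fallback remarks --- that the surplus data ``dies under $\Omega$'', that the diagonal classes ``become nullhomotopic after the relevant inclusions are looped'', and that in bad cases one uses an unspecified ``auxiliary fibration over $M$'' --- are precisely the content of the theorem (your target equivalence $\Omega M\simeq\Omega\bigl((\bigvee_{i=1}^{k}S^{n})\cup_{[\iota_{1},\iota_{2}]}e^{2n}\bigr)$ is equivalent to it), so invoking them is circular; the proposal explicitly says ``granting the requisite identification of $F$'', which is to grant the theorem.

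The paper's route differs at exactly this point and is what closes the gap. Instead of mapping to $S^{n}\times S^{n}$, one maps to the cofibre $Q=P/J$ of the inclusion of $J$ (the wedge complementary to a dual pair of spheres in the $(2n-1)$-skeleton, supplied by Poincar\'e duality via Lemma~\ref{LPD}; the hypothesis $n\notin\{2,4,8\}$ enters only to rule out $y^{*}=\pm x^{*}$, i.e.\ Hopf invariant one, so that a genuine $S^{n}\vee S^{n}$ splits off the skeleton). The space $Q$ always exists and is only required to have the cohomology \emph{ring} of $S^{n}\times S^{n}$; Lemma~\ref{Qhlgy} shows $H_{*}(\Omega Q)\cong\mathbb{Z}[u,v]$ with $\Omega(q\circ s')$ inducing abelianization, so the Hilton--Milnor section gives a right homotopy inverse of $\Omega q$ (Lemma~\ref{loopQsplit}) with no need to kill any $a_{i}$. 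The fibre $F$ is then identified not by ``cutting down'' over $\bar P$ but by the $H_{*}(\Omega Q)$-module Serre spectral sequence of the principal fibration together with the holonomy action, yielding $F\simeq\Omega Q\ltimes J$ (Propositions~\ref{P4} and~\ref{Ftype}); Theorem~\ref{Walldecomp} is then the specialization of Theorem~\ref{TMain} through Proposition~\ref{C1}, whose skeleton hypothesis is exactly Wall's fact that the $(2n-1)$-skeleton is $\bigvee_{i=1}^{k}S^{n}$. If you want to salvage your draft, replace the hypothetical $g$ by the quotient map $q\colon M\to M/J$ and prove the two lemmas about $\Omega Q$ and about $F$; without that (or an independent argument that the diagonal classes are harmless after looping), the proof does not go through.
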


\begin{theorem} 
\label{connsum} 
Let $M$ and $N$ be closed oriented $(m-1)$-connected $n$-dimensional manifolds, 
with $1<m\leq n-m$. Suppose that $H_*(M)$ is torsion-free and there is a ring isomorphism  
$H^*(N)\cong H^*(S^m\times S^{n-m})$. 
Let $M-\ast$ and $N-\ast$ be the punctured manifolds with a single point $\ast$ removed. 
Then the following hold: 
\begin{romanlist} 
\item there is a homotopy equivalence
$$
\Omega (M\#N)\simeq \Omega(S^{m}\times S^{n-m})\times 
    \Omega((M-\ast)\vee ((M-\ast)\wedge \Omega(S^{m}\times S^{n-m})));   
$$ 
\item the looped inclusion
$\seqm{\Omega((M-\ast)\vee\bar N)\simeq\Omega((M-\ast)\vee S^{m}\vee S^{n-m})} 
    {\Omega i}{\Omega M}$ 
has a right homotopy inverse. 
\end{romanlist} 
Consequently, the homotopy type of $\Omega (M\#N)$ is independent of 
the homotopy type of $N$, and depends only on the homotopy type of $M-\ast$. 
\end{theorem}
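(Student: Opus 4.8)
The plan is to recognise $M\#N$ as an instance of the general loop space decomposition of this paper — the one of which Theorems~\ref{1conndecomp} and~\ref{Walldecomp} are also special cases — and to extract both parts of the statement, as well as the independence assertion, from it.

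\emph{Cell structure of the connected sum.} Write $M=(M-\ast)\cup_{a}D^{n}$ and $N=(N-\ast)\cup_{b}D^{n}$, where $a$ and $b$ are the attaching maps of the top cells. Removing an open disc from the interior of each top cell identifies $M\#N$ with the homotopy pushout of the attaching maps $\seqm{S^{n-1}}{a}{M-\ast}$ and $\seqm{S^{n-1}}{b}{N-\ast}$, hence with $\bigl((M-\ast)\vee(N-\ast)\bigr)\cup_{\phi}D^{n}$, where $\phi$ is the sum of $a$ and $b$ pushed into the two wedge summands; moreover the inclusion $(M-\ast)\vee(N-\ast)\hookrightarrow M\#N$ of the $(n-1)$-skeleton is homotopic to the inclusion of the punctured manifold $(M\#N)-\ast$. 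Thus there is a homotopy cofibration $\seqmm{S^{n-1}}{\phi}{(M-\ast)\vee(N-\ast)}{j}{M\#N}$ with $j$ the looped map appearing in part~(ii).

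\emph{Identifying the summands.} The ring isomorphism $H^{\ast}(N)\cong H^{\ast}(S^{m}\times S^{n-m})$ forces $H_{\ast}(N)$ to be torsion free and concentrated in degrees $0,m,n-m,n$. Since $N$ is in addition a closed oriented $(m-1)$-connected manifold with $1<m\le n-m$, a Poincar\'{e} duality argument identifies $\bar N:=N-\ast$ (the $(n-1)$-skeleton of $N$) with $S^{m}\vee S^{n-m}$, and identifies the Whitehead-product component of $b\in\pi_{n-1}(S^{m}\vee S^{n-m})$ with $\pm[\iota_{m},\iota_{n-m}]$ — the latter because the cup product $H^{m}(N)\otimes H^{n-m}(N)\to H^{n}(N)$ is a perfect pairing. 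Re-orienting $N$ if necessary we may take this coefficient to be $+1$, so
\[
M\#N\simeq\bigl((M-\ast)\vee S^{m}\vee S^{n-m}\bigr)\cup_{\phi}D^{n},\qquad \phi\simeq a+[\iota_{m},\iota_{n-m}]+(\text{terms of higher Hilton weight}),
\]
the higher-weight terms being the part of $b$ lying in $\pi_{n-1}(S^{m})\oplus\pi_{n-1}(S^{n-m})$ together with iterated Whitehead products.

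\emph{Applying the decomposition.} It remains to check that $A:=M-\ast$ meets the hypotheses imposed on the auxiliary complex by the general decomposition theorem: $A$ is $(m-1)$-connected; $H_{\ast}(A)$ is torsion free, since it agrees with $H_{\ast}(M)$ below degree $n$ (which is free by hypothesis) and $H_{n-1}(A)\cong H_{n-1}(M)$ because the top cell of $M$ is homologically trivial; and $A$, being a punctured manifold, carries the Poincar\'{e}--Lefschetz structure the construction exploits. The general theorem then applies to $X=M\#N$, with the pair of classes in $H^{m}(X)\times H^{n-m}(X)$ inherited from $N$ playing the role of the distinguished Poincar\'{e}-dual generators; its output is exactly the homotopy equivalence of part~(i), and it also provides a right homotopy inverse for $\seqm{\Omega((M\#N)-\ast)}{\Omega j}{\Omega(M\#N)}$. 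Since $(M\#N)-\ast\simeq(M-\ast)\vee S^{m}\vee S^{n-m}$ and $\bar N\simeq S^{m}\vee S^{n-m}$, this last statement is part~(ii). Finally the right-hand side of~(i) mentions only $M-\ast$ and $S^{m}\times S^{n-m}$: $N$ has disappeared, which is the concluding assertion.

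\emph{The main obstacle} lies in the passage from the cell structure to the general theorem, namely the higher-weight correction terms in $\phi$ coming from the non-Whitehead part of $b$. These are genuinely present in general — for instance, if $N$ is a non-trivial linear $S^{n-m}$-bundle over $S^{m}$ then $N\not\simeq S^{m}\times S^{n-m}$, yet the theorem asserts $\Omega(M\#N)$ is unchanged. The key point is that every correction term has strictly smaller Hilton weight than the principal Whitehead product $[\iota_{m},\iota_{n-m}]$, the class that witnesses Poincar\'{e} duality; the decomposition is obtained by realising $\Omega(S^{m}\times S^{n-m})$ as a retract of $\Omega(S^{m}\vee S^{n-m})$ via the Hilton--Milnor theorem and then extending that retraction over the $M-\ast$ wedge summand, and one must verify that this is insensitive to the corrections — equivalently, that $\phi$ is an inert map. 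Establishing this inertness, and hence that $\Omega(M\#N)$ depends only on $M-\ast$, is the technical heart of the argument.
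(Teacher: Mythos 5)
Your overall strategy -- realizing $M\#N$ as a special case of the paper's general decomposition -- is the right one, but as written the proposal does not constitute a proof, and the point at which it stops is precisely the point where it has misread what needs to be verified. The general result (Theorem~\ref{TMain}, via Definition~\ref{Pdef}) has purely cohomological hypotheses: the $(n-1)$-skeleton must split as $J\vee(S^{m}\vee S^{n-m})$, and the homotopy cofibre $Q$ of $J\to P$ must satisfy $H^{*}(Q)\cong H^{*}(S^{m}\times S^{n-m})$ as rings. There is no hypothesis whatsoever on the attaching map of the top cell, its Hilton--Milnor expansion, or any ``inertness'' property; nor does $J$ need any Poincar\'e--Lefschetz structure. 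For $P=M\#N$ the verification is almost immediate: take $J=\bar M\simeq M-\ast$, note $\bar N\simeq S^{m}\vee S^{n-m}$ so that $\bar P\simeq J\vee(S^{m}\vee S^{n-m})$, and observe that collapsing $\bar M$ inside $M\#N$ gives $Q\simeq N$, so the ring condition on $Q$ is literally the hypothesis on $N$. Torsion-freeness of $H_{*}(M\#N)$ follows since Poincar\'e duality makes $\bar P$ an $(n-m)$-dimensional complex with torsion-free homology. That is the whole check.

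The genuine gap is that you defer the ``technical heart'' -- insensitivity of the decomposition to the higher Hilton-weight terms in $\phi$, equivalently that $\phi$ is inert -- and never establish it. In the paper's framework this issue does not arise as an input at all: the right homotopy inverse for $\Omega\bar P\to\Omega P$ (your inertness) is a \emph{conclusion}, namely Theorem~\ref{TMain}(ii), and it is proved not by analyzing $\phi$ through the Hilton--Milnor theorem but by studying the principal fibration $\Omega Q\to F\to P$ induced by $P\to Q$: Lemma~\ref{Qhlgy} and Lemma~\ref{loopQsplit} split off $\Omega Q\simeq\Omega(S^{m}\times S^{n-m})$, and Propositions~\ref{P4} and~\ref{Ftype} identify $F\simeq\Omega Q\ltimes J$ via the Serre spectral sequence of $H_{*}(\Omega Q)$-modules. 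Your auxiliary observations (e.g.\ that the Whitehead component of $b$ is $\pm[\iota_{m},\iota_{n-m}]$) are true but unnecessary. So either prove the inertness claim you flag -- which is essentially reproving Theorem~\ref{TMain}(ii) by hand and is nontrivial -- or replace your attaching-map analysis by the cofibre observation $Q\simeq N$, after which the theorem follows by direct citation.
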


Theorems~\ref{Walldecomp} and~\ref{connsum} are consequences of much more general results presented in
Theorem~\ref{TMain} and Proposition~\ref{C1}, both of which are stated in the context of $CW$-complexes
and Poincar\'e duality spaces. 

Observe that in each of these theorems, the decompositions 
can be further refined. In each case, $J$ is a wedge of simply-connected 
spheres, so $J\simeq\Sigma J^{\prime}$ where $J^{\prime}$ is a wedge of 
spheres. Therefore, using the facts that 
$\Omega(X\times Y)\simeq\Omega X\times\Omega Y$, 
$\Sigma (X\times Y)\simeq\Sigma X\vee\Sigma Y\vee(\Sigma X\wedge Y)$ 
and by~\cite{J}, $\Sigma\Omega S^{s}\simeq\bigvee_{i=1}^{\infty} S^{(s-1)i+1}$, 
we see that $J\wedge\Omega(S^{s}\times S^{t})$ is homotopy equivalent to 
a wedge of spheres. Thus the factor 
$\Omega(J\vee(J\wedge\Omega(S^{s}\times S^{t})))$ 
is homotopy equivalent to the loops on a large wedge of spheres, 
and the Hilton-Milnor Theorem can be applied to decompose this as a 
product of loops on spheres of varying dimensions. In particular, 
in each case, $\Omega M$ decomposes as a product of loops on spheres, 
and so the homotopy groups of $M$ can be determined to the same 
extent as the homotopy groups of spheres. 
A similar refinement is possible in Theorem~\ref{connsum} when $M-\ast$
has the homotopy type of a suspension.

From this point of view, Theorems~\ref{1conndecomp}, \ref{Walldecomp} 
and~\ref{connsum} should be regarded as analogues of the 
Hilton-Milnor Theorem. As such, these theorems are very 
practical and should have numerous applications. We have already 
mentioned how they can be used to determine the homotopy groups 
of $M$. As another application described in detail in 
Section~\ref{sec:4manifold}, we consider principle $G$-bundles 
\(\seqm{P}{}{M}\), 
where $M$ is a simply-connected four-manifold and $G$ is a 
simply-connected, simple compact Lie group. It is well known 
that there are $[M,BG]\cong\mathbb{Z}$ distinct equivalence 
classes of such principle $G$-bundles. However, after looping 
the homotopy types of $\Omega P$ all coincide as 
$\Omega M\times\Omega G$. 

To prove Theorems~\ref{1conndecomp}, \ref{Walldecomp} 
and~\ref{connsum}, we consider a more general class 
of torsion-free $CW$-complexes which resemble Poincar\'{e} 
duality spaces. For such a space $P$ of connectivity $m-1$ and 
dimension~$n$, we assume that the $(n-1)$-skeleton $\bar P$ 
has $S^{m}\vee S^{n-m}$ as a wedge summand, and that there is a 
space $Q$ and a map 
$\seqm{P}{q}{Q}$ 
such that there is a ring isomorphism 
$H^{\ast}(Q)\cong H^{\ast}(S^{m}\times S^{n-m})$ and the composite 
\(\seqmm{S^{m}\vee S^{n-m}}{}{P}{}{Q}\)
is onto in cohomology. Taking $F$ to be the homotopy fibre of $q$,  
we analyze the homology of $F$ via the Serre spectral sequence, 
and then use this to determine its homotopy type. This is then fed 
into a decomposition of $\Omega P$ as $\Omega Q\times F$. The 
decompositions in the three theorems above then follow as 
special cases of this more general decomposition. All of this goes 
through provided there are no cup product squares in cohomology, 
which is the reason for the excluded cases $\{2,4,8\}$ in 
Theorem~\ref{Walldecomp}. In the case of simply-connected 
four-manifolds, these difficulties can be overcome through a 
novel modification. If the simply-connected four-manifold~$M$ 
is mapped into $\mathbb{C}P^{\infty}$ by representing a cohomology 
class in degree~$2$, then the homotopy fibre is a simply-connected 
$5$-dimensional Poincar\'{e} duality complex $Z$ which fits into the 
general class of torsion-free spaces $P$ above. The resulting 
decomposition of $\Omega Z$ is then used to determine the homotopy 
type of $\Omega M$.

\section{A general homotopy decomposition}

The loop space functor and localization functors both have effect of simplifying 
homotopy types while retaining most of the original homotopy theoretic information.
At one extreme a conjecture of Anick~\cite{Anick2} 
(for which there is some evidence~\cite{MW,Anick1,Stelzer1}) 
asserts that the loop space of any simply connected finite $CW$-complex
localized away from a predetermined finite set of primes decomposes 
as a weak product of a certain countable list of indecomposable spaces,  
while at the other end of the spectrum the loop space homotopy type of a 
highly connected $CW$-complex is uniquely determined. 
This is not difficult to see, for if $X$ and $Y$ and $(2n-2)$-dimensional $(n-1)$-connected,
then they are the $2n$-skeletons of $\Sigma\Omega X$ and $\Sigma\Omega Y$ respectively, 
so a homotopy equivalence $\Omega X\simeq\Omega Y$ would allow one to construct a composite 
\seqmmm{X}{incl.}{\Sigma\Omega X}{\simeq}{\Sigma\Omega Y}{eval.}{Y}
that induces an isomorphism on homology, and is therefore a homotopy 
equivalence. Recently, a much stronger result of Grbi\'{c} and Wu~\cite{GW} 
shows that if $X$ and $Y$ are simply-connected finite dimensional co-$H$ spaces  
then $X\simeq Y$ if and only if $\Omega X\simeq\Omega Y$. 

This leads to a natural question. Starting with a finite $CW$-complex $\bar P$,
and attaching a cell to $\bar P$ to form a space $P$, which homotopy classes of 
attaching maps yield the same loop space homotopy type for $P$?
By the above remarks, distinct homotopy classes of $co$-$H$-maps tend to yield 
distinct loop space homotopy types.  
Our goal is to provide sufficient cohomological criteria given a few conditions on $\bar P$.
More precisely, we give a loop space decomposition for a certain class 
of spaces, which includes certain connected-sums and certain 
Poincar\'{e} duality spaces (both examples to be discussed in more detail in the 
next section). Looping will have the effect of simplifying homotopy types, 
and the homotopy types of the loop spaces will be shown to depend only 
on simple data, often obtained from the homology of the original space 
in degrees strictly less than the dimension of the space. 
We begin by defining the class of spaces we have in mind. Throughout, 
homology is taken with integer coefficients. 

\begin{definition} 
\label{Pdef}  
Let $m$ and $n$ be integers such that $1<m\leq n-m$.
Suppose $P$ is a finite $n$-dimensional $(m-1)$-connected $CW$-complex
with torsion-free integral homology given by
$$
H_*(P)\cong \Z\paren{a_1,\ldots,a_\ell,z}  
$$
where 
$$
1<m=|a_1|\leq |a_2|\leq\cdots\leq |a_\ell|=n-m<|z|=n.
$$ 
Let $\bar P$ be the $(n-1)$-skeleton of $P$ and let 
$i\,\colon\seqm{\bar P}{}{P}$ 
be the skeletal inclusion. Notice that 
the bottom cell of $\bar P$ occurs in dimension $m$ while the 
top cell occurs in dimension $n-m$. 
\smallskip 

Define $\mathcal{P}$ as the collection of all such spaces $P$ which also 
satisfy the following two properties:  
\begin{itemize}
\item[($1$)] there is a homotopy equivalence $\bar P\simeq J\vee (S^{m}\vee S^{n-m})$ 
         for some space $J$; 
\item[($2$)] if $Q$ is the homotopy cofibre of the composite 
         $J\hookrightarrow\seqm{\bar{P}}{i}{P}$, then there is a ring isomorphism 
         $H^*(Q)\cong H^*(S^{m}\times S^{n-m})$. 
\end{itemize} 
\end{definition} 

To analyze $\Omega P$ for $P\in\mathcal{P}$, some observations and notation 
are required. 

\noindent 
\textbf{Observations}: 
\begin{itemize} 
\item[($1$)] If $X$ is a space and $H_*(X)$ is torsion-free, an element 
$x\in H_*(X)$ has a dual class in $H^*(X)$ which we label as $x^*$. 
In our case, since $H_*(P)$ is torsion-free, whenever $|a_i|+|a_j|=n$, 
the cup product $a_{i}^*a_{j}^*$ is some multiple of $z^*$; define 
the integer $c_{ij}$ by $a_{i}^*a_{j}^*=c_{ij}z^*$. 

\item[($2$)] Observe that the homological description of $P$ implies that there is a 
homotopy cofibration 
$$ 
\seqmm{S^{n-1}}{\alpha}{\bar P}{i}{P} 
$$ 
where $\alpha$ attaches the top cell to $P$. A basis for $H_*(\bar P)$ is given 
by the elements $\{a_1,\ldots,a_\ell\}$. 

\item[($3$)] The homotopy decomposition of $\bar P$ lets us define composites 
$$
s\,\colon J\hookrightarrow\seqm{\bar P}{i}{P}
$$
$$
s'\,\colon S^{m}\vee S^{n-m}\hookrightarrow\seqm{\bar P}{i}{P}.
$$ 
Let $\iota_{t}\in H_t(S^t)$ represent a generator. Without loss of 
generality we may assume that the basis for $H_*(P)$ has been chosen 
so that $(s')_*(\iota_{m})=a_{1}$ and $(s')_*(\iota_{n-m})=a_{\ell}$. 
Then the decomposition $\bar P\simeq J\vee(S^{m}\vee S^{n-m})$ implies 
that $s_*$ induces an injection onto $\{a_{2},\ldots,a_{\ell-1}\}$. 

\item[($4$)] The definition of $Q$ also lets us define a map $q$ by the homotopy 
cofibration 
$$ 
\seqmm{J}{s}{P}{q}{Q}. 
$$ 
As this cofibration induces a long exact sequence in homology, the fact 
that $\bar P\simeq J\vee (S^{m}\vee S^{n-m})$ is the $(n-1)$-skeleton of $P$ 
implies that the composite 
$\seqmm{S^m\vee S^{n-m}}{s'}{P}{q}{Q}$ 
induces an injection in homology. 

\item[($5$)] The ring isomorphism $H^*(Q)\cong H^*(S^{m}\times S^{n-m})$ 
implies that 
$$
H_*(Q)\cong \Z\{x,y,e\}, 
$$
where $|x|=m$, $|y|=n-m$, $|e|=n$ and the generators can be 
chosen so that $(x^*)^2=(y^*)^2=0$ and $y^*x^*=e^*$. Further, 
since $(q\circ s')_*$ is an injection, we have $q_*(a_{1})=x$,  
$q_{*}(a_{2})=y$ and $q_{*}(z)=e$; and as $q\circ s$ is null 
homotopic we have $q_{*}(a_{i})=0$ for $2\leq i\leq \ell-1$. 

\item[($6$)] The description of $q_{\ast}$ on the generators of $H_*(P)$ 
implies that $c_{\ell 1}=1$, $c_{1\ell}=(-1)^{m(n-m)}$, and 
$c_{11}=c_{\ell\ell}=0$. 
\end{itemize} 

We are aiming for the homotopy decomposition of $\Omega P$ 
stated in Theorem~\ref{TMain}. To get started, we begin with an initial 
decomposition. Define the space $F$ and the maps $f$ and $\delta$ 
by the homotopy fibration sequence
$$
\seqmmm{\Omega Q}{\delta}{F}{f}{P}{q}{Q}.
$$ 
We first calculate the homology of $\Omega Q$ and relate it to the 
homology of $\Omega (S^{m}\vee S^{n-m})$. By the Bott-Samelson 
theorem, there is an algebra isomorphism  
$$ 
H_*(\Omega(S^{m}\vee S^{n-m}))\cong T(u,v) 
$$ 
where $T(u,v)$ is the free tensor algebra on generators $u$ and $v$ 
of degrees $m-1$ and $n-m-1$ respectively. Let $\mathbb{Z}[u,v]$ 
be the polynomial algebra generated by $u$ and $v$. 

\begin{lemma} 
\label{Qhlgy} 
There is a coalgebra isomorphism 
$H_*(\Omega Q)\cong\mathbb{Z}[u,v]$ 
which can be chosen so that the map 
\(\llnamedright{\Omega(S^{m}\vee S^{n-m})}{\Omega (q\circ s')}{\Omega Q}\) 
induces in homology the abelianization 
$\seqm{T(u,v)}{}{\mathbb{Z}[u,v]}$. 
\end{lemma}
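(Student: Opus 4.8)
The plan is to compute $H_*(\Omega Q)$ directly from the homotopy cofibration $\seqmm{J}{s}{P}{q}{Q}$, or rather from the homology of $Q$ recorded in Observation~($5$), using the loop space homology Serre spectral sequence or — more efficiently — the path-loop fibration over $Q$. Since $H^*(Q)\cong H^*(S^m\times S^{n-m})$ as rings, $Q$ is a rational (indeed integral, since everything is torsion-free) ``Poincar\'e duality-like'' space with the cohomology of a product of two spheres, and the standard computation gives $H_*(\Omega Q)\cong \mathbb{Z}[u,v]$ as a coalgebra, where $|u|=m-1$ and $|v|=n-m-1$. Concretely, I would run the Serre spectral sequence for $\seqm{\Omega Q}{}{PQ}{}{Q}$: the $E_2$-page is $H_*(Q)\otimes H_*(\Omega Q)$, the total space is contractible, and the transgression forces $H_*(\Omega Q)$ to be the tensor product of a polynomial (divided power, but torsion-free and finitely generated in each degree forces polynomial after a dimension count) algebra on a class $u$ transgressing to $x$ and a polynomial algebra on a class $v$ transgressing to $y$ — giving $\mathbb{Z}[u,v]$. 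The fact that $(x^*)^2=(y^*)^2=0$ in $H^*(Q)$ is exactly what is needed so that no truncation occurs and we really get the full polynomial algebra; and the coalgebra structure is the standard one on $\mathbb{Z}[u,v]$ (each $u^iv^j$ primitive-free, with the shuffle/binomial comultiplication), which is what ``coalgebra isomorphism'' refers to.

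Next I would identify the map induced by $\Omega(q\circ s')$. By Observation~($4$), the composite $\seqmm{S^m\vee S^{n-m}}{s'}{P}{q}{Q}$ is injective in homology, sending $\iota_m\mapsto x$ and $\iota_{n-m}\mapsto y$ (using the normalization $(s')_*(\iota_m)=a_1$, $(s')_*(\iota_{n-m})=a_\ell$ from Observation~($3$) together with $q_*(a_1)=x$, $q_*(a_\ell)=y$ from Observation~($5$)). By the Bott–Samelson theorem $H_*(\Omega(S^m\vee S^{n-m}))\cong T(u,v)$ with $u,v$ the Samelson/James generators corresponding to $\iota_m,\iota_{n-m}$. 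Naturality of the James construction (or of the inclusion of generators into the loop space homology Hopf algebra) shows that $\Omega(q\circ s')_*$ is the unique Hopf algebra map $T(u,v)\to \mathbb{Z}[u,v]$ carrying the generator $u$ to $u$ and $v$ to $v$. That map is precisely the abelianization $T(u,v)\to \mathbb{Z}[u,v]$. One small point to check: that the coalgebra isomorphism $H_*(\Omega Q)\cong\mathbb{Z}[u,v]$ produced by the spectral sequence can be \emph{chosen} compatibly, i.e. so that the images of $u$ and $v$ under $\Omega(q\circ s')_*$ are exactly the polynomial generators; this is arranged by choosing the spectral sequence generators to be the transgression-preimages of $x=q_*(s')_*(\iota_m)$ and $y=q_*(s')_*(\iota_{n-m})$.

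The main obstacle I anticipate is not the algebra but the bookkeeping needed to guarantee \emph{integrality} and the polynomial (not divided-power) structure: a priori the Serre spectral sequence of the path-loop fibration only tells you $H_*(\Omega Q)$ has an associated graded that is an exterior-times-divided-power algebra on transgressive generators. To pin down $\mathbb{Z}[u,v]$ on the nose one uses that $H_*(\Omega Q)$ is torsion-free (one must verify this — it follows because the spectral sequence collapses at the appropriate stage with no extension problems, the base having free homology concentrated in a few degrees), that it is a Hopf algebra, and that a connected, torsion-free, finitely-generated-in-each-degree Hopf algebra over $\mathbb{Z}$ with the right Poincar\'e series on two generators in the stated degrees must be the polynomial algebra — here the hypothesis $m\le n-m$ and $m>1$ keeps the degrees of $u$ and $v$ positive and the algebra connected, and avoids low-dimensional coincidences. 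Once these structural facts are in hand, matching up with $T(u,v)\twoheadrightarrow\mathbb{Z}[u,v]$ via naturality is routine.
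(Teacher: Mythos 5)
Your computation of $H_*(\Omega Q)$ as a graded module/coalgebra via the path--loop Serre spectral sequence over $Q$ is the same as the paper's first step. The gap is in the second half, where you identify $(\Omega(q\circ s'))_*$ with the abelianization. Knowing only that the map is an algebra map sending $u\mapsto u$, $v\mapsto v$ does not suffice: for the map to \emph{be} the abelianization you also need (i) that it kills the graded commutator $[u,v]$, and (ii) that it is surjective (equivalently, that $H_*(\Omega Q)$ is generated as an algebra by the images of $u$ and $v$). You propose to get both from the structural claim that a connected, torsion-free Hopf algebra over $\mathbb{Z}$ with the Poincar\'e series of $\mathbb{Z}[u,v]$ ``on two generators'' must be polynomial, but this is where the argument fails. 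The phrase ``on two generators'' presupposes exactly the surjectivity (ii) that is at issue, and without it the claim is false: when the generators have odd degree, the Poincar\'e series does not detect the algebra structure --- for example $\Lambda(u)\otimes\mathbb{Z}[w]$ with $|w|=2|u|$ and $|u|$ odd has the same Poincar\'e series as $\mathbb{Z}[u]$, which is precisely the shape of $H_*(\Omega S^{m};\mathbb{Q})$ for $m$ even. So torsion-freeness, the Hopf structure and the Poincar\'e series together cannot force $H_*(\Omega Q)$ to be generated in degrees $m-1$ and $n-m-1$, nor force the commutator of the images to vanish; naturality of the James construction only pins down the images of the generators, not the fate of the rest of $T(u,v)$.

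The paper supplies exactly the two missing facts by comparing the path--loop Serre spectral sequences of $S^{m}\vee S^{n-m}$ and of $Q$ along $q\circ s'$. Since $(q\circ s')_*$ is an isomorphism in degrees $<n$, the comparison shows first that $[u,v]$, represented in the wedge's spectral sequence by $u\otimes\iota_{n-m}+(-1)^{|u||\iota_{n-m}|}\iota_{m}\otimes v$, is sent to the image of the differential $d^{n}(e)=a\otimes y+(-1)^{|a||y|}x\otimes b$ and hence to zero; and second that the induced map of $E^{n+1}$-pages (and hence of $E^{\infty}$-pages) is onto, so $(\Omega(q\circ s'))_*$ is surjective. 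The map then factors through the abelianization $T(u,v)\to\mathbb{Z}[u,v]$, and the factored map is a surjection between graded free $\mathbb{Z}$-modules with equal Poincar\'e series, hence an isomorphism, which is taken as the coalgebra identification. If you add this comparison argument (or some other proof of (i) and (ii)) in place of the structural claim, your outline becomes essentially the paper's proof.
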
 

\begin{proof} 
First, consider the homology Serre spectral sequence for the 
path-loop homotopy fibration 
$\seqmm{\Omega (S^{m}\vee S^{n-m})}{}{\ast}{}{S^{m}\vee S^{n-m}}$. 
Let $\iota_{k}\in H_{k}(S^{k})$ represent a generator. Then 
the elements $\iota_{m},\iota_{n-m}\in H_*(S^{m}\vee S^{n-m})$ 
transgress to the elements $u,v\in T(u,v)$, and the element 
$[u,v]\in T(u,v)$ arises in the spectral sequence as the element 
$u\otimes\iota_{n-m}+(-1)^{\vert u\vert\vert\iota_{n-m}\vert}\iota_{m}\otimes v$. 

Next, consider the homology Serre spectral sequence  
for the path-loop homotopy fibration 
$\seqmm{\Omega Q}{}{\ast}{}{Q}$. 
By Observation~(5), $H_*(Q)\cong\mathbb{Z}\{x,y,e\}$ where 
$\vert x\vert=m$, $\vert y\vert=n-m$, $\vert e\vert=n$ and 
the cohomology duals satisfy $x^*y^*=e^*$. Thus in homology, 
the reduced diagonal $\overline{\Delta}(e)$ equals $x\otimes y+y\otimes x$. 
Thus in the Serre spectral sequence for the path-loop homotopy fibration, we 
have $x$ and $y$ transgressing to elements $a$ and~$b$ respectively, and 
$d^{n}(e)=a\otimes y+(-1)^{\vert a\vert\vert y\vert} x\otimes b$. It is 
now a straightforward calculation to show that there is an isomorphism of 
vector spaces $H_*(\Omega Q)\cong\mathbb{Z}[a,b]$ where $\vert a\vert=m-1$ 
and $\vert b\vert=n-m-1$. 

Now consider the homotopy commutative diagram of path-loop homotopy fibrations 
\[\diagram 
         \Omega (S^{m}\vee S^{n-m})\rto\dto^{\Omega(q\circ s')} 
               & \ast\rto\dto & S^{m}\vee S^{n-m}\dto^{q\circ s'} \\ 
         \Omega Q\rto & \ast\rto & Q. 
  \enddiagram\] 
This induces a morphism of Serre spectral sequences between the 
two path-loop homotopy fibrations. By Observation~(5), the map $(q\circ s')_{\ast}$ 
is an isomorphism in degrees~$<n$. Therefore, comparing Serre spectral 
sequences, $(\Omega(q\circ s'))_{\ast}$ is an isomorphism in degrees~$<n-1$. 
In particular, $(\Omega(q\circ s'))_{\ast}$ is an isomorphism in degrees 
$m-1$ and $n-m-1$. Thus, up to sign, $(\Omega(q\circ s'))_{\ast}$ sends 
$u,v\in T(u,v)$ to $a,b\in\mathbb{Z}[u,v]$. Comparing spectral sequences, 
we also have the element 
$u\otimes\iota_{n-m}+(-1)^{\vert u\vert\vert\iota_{n-m}\vert}\iota_{m}\otimes v$ 
sent to $a\otimes y+(-1)^{\vert a\vert\vert y\vert} x\otimes b$, which is 
the image of the differential $d^{n}(e)$. That is, 
$[u,v]\in T(u,v)$ is sent to $0\in\mathbb{Z}[a,b]$. Further, it is 
straightforward to see that once the $d^{n}$ differential is taken 
into account and we move to $E^{n+1}$, that the $E^{n+1}$ page 
for the fibration 
$\seqmm{\Omega (S^{m}\vee S^{n-m})}{}{\ast}{}{S^{m}\vee S^{n-m}}$ 
maps onto the $E^{n+1}$ page for the fibration 
$\seqmm{\Omega Q}{}{\ast}{}{Q}$. As there are no more non-trivial 
differentials, the same is true of the $E^{\infty}$ pages, and so 
$(\Omega(q\circ s'))_{\ast}$ is onto. 

Finally, since $(\Omega(q\circ s'))_{\ast}$ is an algebra map and 
$(\Omega(q\circ s'))_{\ast}([u,v])=0$, there is a factorization 
\[\diagram 
         T(u,v)\rrto^-{(\Omega(q\circ s'))_{\ast}}\dto^{\pi} 
                & & H_*(\Omega Q)\cong\mathbb{Z}[a,b] \\ 
         \mathbb{Z}[u,v]\urrto_{g} & & 
  \enddiagram\] 
for some map $g$, where $\pi$ is the abelianization map. 
Since $(\Omega(q\circ s'))_{\ast}$ is onto and both 
$\mathbb{Z}[u,v]$ and $\mathbb{Z}[a,b]$ have the same 
Poincar\'{e} series, $g$ must be an isomorphism. The statement 
of the lemma now follows. 
\end{proof} 

By the Hilton-Milnor Theorem, the inclusion of the 
wedge into the product  
$\seqm{S^{m}\vee S^{n-m}}{j}{S^{n}\times S^{n-m}}$ 
has a right homotopy inverse after looping. That is, there is a map 
$$ 
\phi\,\colon\seqm{\Omega S^{n}\times\Omega S^{n-m}}{} 
     {\Omega(S^{m}\vee S^{n-m})} 
$$ 
which is a right homotopy inverse of $\Omega j$. 

\begin{lemma} 
\label{loopQsplit} 
The composite 
$\seqmmm{\Omega S^{m}\times\Omega S^{n-m}}{\phi}{\Omega(S^{m}\vee S^{n-m})} 
   {\Omega s'}{\Omega P}{\Omega q}{\Omega Q}$ 
is a homotopy equivalence. Consequently, in the homotopy fibration sequence 
\seqmmm{\Omega Q}{\delta}{F}{f}{P}{q}{Q},  
the map $\delta$ is null homotopic, 
implying that there are homotopy equivalences 
$$
\Omega P\simeq\Omega Q\times\Omega F\simeq 
    \Omega S^{m}\times\Omega S^{n-m}\times\Omega F. 
$$ 
\end{lemma}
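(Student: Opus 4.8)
The plan is to derive everything from Lemma~\ref{Qhlgy} together with the existence of $\phi$: first show the displayed composite induces an isomorphism on integral homology, hence is a homotopy equivalence, and then obtain the statements about $\delta$ and the product decomposition by formal manipulation of the fibration sequence.

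For the homology step, write $h=\Omega q\circ\Omega s'\circ\phi=\Omega(q\circ s')\circ\phi$. Since $\phi$ is a right homotopy inverse of $\Omega j$, on homology $(\Omega j)_{*}\circ\phi_{*}$ is the identity of $H_{*}(\Omega S^{m}\times\Omega S^{n-m})$. Using $\Omega(S^{m}\times S^{n-m})\simeq\Omega S^{m}\times\Omega S^{n-m}$ with the K\"{u}nneth and Bott--Samelson theorems, $(\Omega j)_{*}$ is the abelianization $T(u,v)\to\mathbb{Z}[u,v]$ followed by an isomorphism onto $H_{*}(\Omega S^{m}\times\Omega S^{n-m})$; hence $\phi_{*}$ composed with this abelianization is an isomorphism onto $\mathbb{Z}[u,v]$. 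On the other hand Lemma~\ref{Qhlgy} says precisely that $(\Omega(q\circ s'))_{*}$ is the abelianization $T(u,v)\to\mathbb{Z}[u,v]\cong H_{*}(\Omega Q)$. Composing these two facts shows $h_{*}=(\Omega(q\circ s'))_{*}\circ\phi_{*}$ is an isomorphism. As $\Omega S^{m}\times\Omega S^{n-m}$ and $\Omega Q$ are loop spaces, they are simple spaces of finite type ($Q$ is a finite complex), so the homology Whitehead theorem promotes $h$ to a homotopy equivalence. In particular $\Omega Q\simeq\Omega S^{m}\times\Omega S^{n-m}$.

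Now the consequences. Since $h=\Omega q\circ(\Omega s'\circ\phi)$ is a homotopy equivalence, $\sigma=\Omega s'\circ\phi\circ h^{-1}\colon\Omega Q\to\Omega P$ satisfies $\Omega q\circ\sigma\simeq\mathrm{id}$, a right homotopy inverse of $\Omega q$. In the fibration sequence $\Omega Q\xrightarrow{\delta}F\xrightarrow{f}P\xrightarrow{q}Q$ consecutive maps compose trivially, so $\delta\simeq\delta\circ\Omega q\circ\sigma\simeq\ast$, which is the statement about $\delta$. Finally, $\Omega q$ is a loop map between loop spaces, with homotopy fibre $\Omega F$ and right homotopy inverse $\sigma$, so the standard splitting argument applies: the map $\theta\colon\Omega F\times\Omega Q\to\Omega P$, $\theta(a,b)=\Omega f(a)\cdot\sigma(b)$, satisfies $\Omega q\circ\theta\simeq\mathrm{pr}_{2}$ and $\theta|_{\Omega F\times\{\ast\}}\simeq\Omega f$, whence the five lemma forces $\theta$ to be a homotopy equivalence $\Omega P\simeq\Omega F\times\Omega Q$. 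Together with $\Omega Q\simeq\Omega S^{m}\times\Omega S^{n-m}$ this gives the asserted decomposition.

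No step is really hard once Lemma~\ref{Qhlgy} is available; the point that needs attention is the bookkeeping in the homology step, namely that the abelianization through which $(\Omega j)_{*}$ factors matches the one in Lemma~\ref{Qhlgy}, which holds because the abelianization of $T(u,v)$ is unique up to canonical isomorphism. One should also note at the boundary case $m=2$ that, although $\Omega S^{2}$ and $\Omega Q$ fail to be simply connected, they are still simple, so the homology isomorphism still yields a genuine homotopy equivalence.
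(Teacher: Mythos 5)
Your proposal is correct and follows essentially the same route as the paper: it uses Lemma~\ref{Qhlgy} together with the Hilton--Milnor right inverse $\phi$ to see that the composite induces an isomorphism on integral homology (hence is a homotopy equivalence between simple spaces), and then deduces that $\Omega q$ has a right homotopy inverse, that $\delta$ is null homotopic, and that $\Omega P\simeq\Omega Q\times\Omega F$ by the standard splitting argument. Your extra bookkeeping identifying $(\Omega j)_{*}$ with the abelianization is just a more explicit version of the paper's observation that $\phi_{*}$ maps onto the copy of $\mathbb{Z}[u,v]$ inside $T(u,v)$, so there is no substantive difference.
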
 

\begin{proof} 
The fact that $\phi$ is a right homotopy inverse of $\Omega i$ implies 
that $\phi_{\ast}$ is a coalgebra map which maps onto the sub-coalgebra 
$\mathbb{Z}[u,v]$ of $T(u,v)\cong H_*(\Omega(S^{m}\vee S^{n-m}))$. 
By Lemma~\ref{Qhlgy}, $(\Omega(q\circ s'))_{\ast}$ maps this sub-coalgebra 
isomorphically onto $H_{\ast}(Q)$. Thus $\Omega q\circ\Omega s'\circ\phi$ 
induces an isomorphism in homology and so is a homotopy equivalence. 

For the consequences, consider the homotopy fibration sequence 
$\seqmmm{\Omega F}{}{\Omega P}{\Omega q}{\Omega Q}{\delta}{F}$. 
We have just shown that $\phi\circ\Omega s'$ is a right homotopy inverse 
for~$\Omega q$. Therefore, the map $\delta$ 
is null homotopic, and this immediately implies that there is a 
homotopy equivalence $\Omega P\simeq\Omega Q\times\Omega F$.  
\end{proof} 

Next, we wish to give an explicit homotopy decomposition 
of the space $\Omega F$. The first step is to calculate its homology. 
By Observation~(4), the composite \seqmm{J}{s}{P}{q}{Q} is a homotopy 
cofibration, so it is null homotopic. Therefore, $s$ lifts through 
\seqm{F}{f}{P} 
to a map
$$
\bar s\colon\seqm{J}{}{F}. 
$$ 
By Observation~(3), $s_{\ast}$ induces an injection onto $\{a_{2},\ldots,a_{\ell-1}\}$.   
So its lift $\bar s$ has the property that~$(\bar s)_{\ast}$ is an 
injection, and we will also label a basis for the image of $(\bar s)_{\ast}$ by  
$\{a_{2},\ldots,a_{\ell-1}\}$. 

As the homotopy fibration 
\begin{equation}
\label{EPrincipalFib}
\seqmm{\Omega Q}{\delta}{F}{f}{P}. 
\end{equation} 
is principal, there exists a left action 
$$
\theta\,\colon\seqm{\Omega Q\times F}{}{F}
$$
such that the following diagram commutes up to homotopy
\begin{equation}
\label{EAction}
\diagram
\Omega Q\times \Omega Q\rto^{\ID\times\delta}\dto^{\mu} 
    & \Omega Q\times F\dto^{\theta}\\
\Omega Q\rto^{\delta} & F  
\enddiagram
\end{equation} 
where $\ID$ is the identity map and $\mu$ is the standard loop space 
multiplication. 

\begin{proposition}
\label{P4}
There is an isomorphism of left $H_{*}(\Omega Q)$-modules
$$
H_*(F)\cong \Z\paren{a_2,\ldots,a_{\ell-1}}\otimes H_*(\Omega Q),
$$
where $\mathbb{Z}\{a_{2},\ldots, a_{\ell-1}\}$ is the image of 
$\bar{s}_{\ast}$ and the left action of $H_{*}(\Omega Q)$ given by $\theta_*$. 
\end{proposition}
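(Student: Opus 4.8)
The plan is to compute $H_*(F)$ by means of the homology Serre spectral sequence of the principal fibration $(\ref{EPrincipalFib})$, and then to upgrade the resulting description of the associated graded to an honest isomorphism of $H_*(\Omega Q)$-modules using the action $\theta$ and the lift $\bar s$. Since $P$ is simply-connected and $H_*(\Omega Q)\cong\mathbb{Z}[u,v]$ is free abelian by Lemma~\ref{Qhlgy}, the spectral sequence has $E^2_{p,q}\cong H_p(P)\otimes H_q(\Omega Q)$; and since the fibration is principal with action $\theta$ it is a spectral sequence of left $H_*(\Omega Q)$-modules, with $E^2$ free over $H_*(\Omega Q)$ on the classes $1,a_1,\dots,a_\ell,z$ on the zero row and all differentials $H_*(\Omega Q)$-linear.

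The first step is to identify the differentials on these generators. The classes $a_2,\dots,a_{\ell-1}$ lift through $f$ to $\bar s_*(a_2),\dots,\bar s_*(a_{\ell-1})$ in $H_*(F)$, so they lie in the image of the base edge homomorphism and are permanent cycles; by linearity the free submodule $M:=\mathbb{Z}\{a_2,\dots,a_{\ell-1}\}\otimes H_*(\Omega Q)$ they generate is a sub-spectral-sequence with zero differential. For $a_1$, $a_\ell$ and $z$ I would use naturality with respect to the map into the path-loop fibration $\seqmm{\Omega Q}{}{\ast}{}{Q}$ induced by $q$: combining Observation~(5) ($q_*a_1=x$, $q_*a_\ell=y$, $q_*z=e$) with the computation of that path-loop spectral sequence in the proof of Lemma~\ref{Qhlgy}, one gets, up to units, $d(a_1)=u$, $d(a_\ell)=v$, and $d(z)=\omega\otimes u$ for a class $\omega\in H_{n-m}(P)$ with $q_*\omega=y$ (so $\omega$ equals $a_\ell$ plus a combination of the middle classes), while a short argument — using that $v$ must be transgressed onto — forces $d^m(a_\ell)=0$. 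The essential point to retain is that each of $d(a_1)$, $d(a_\ell)$, $d(z)$ is carried to a non-zero element by the morphism of spectral sequences $q_*\otimes\mathrm{id}$, hence lies outside $M=\ker(q_*\otimes\mathrm{id})$.

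Running the spectral sequence is then bookkeeping. On the one hand the submodule $M$ survives intact to $E^\infty$: no differential leaves it (its generators are permanent cycles), and no differential enters it, because $d$ of any element has non-zero image under $q_*\otimes\mathrm{id}$ by the previous paragraph — it is cleanest to see this by working directly with the ``graph'' submodule $d(z)\cdot H_*(\Omega Q)$ rather than attempting to diagonalise the differential. On the other hand the quotient spectral sequence $E/M$ is identified, via $q_*\otimes\mathrm{id}$, with the path-loop spectral sequence of $Q$, which is acyclic with $E^\infty=\mathbb{Z}\langle 1\rangle$; equivalently, $\delta$ is null-homotopic by Lemma~\ref{loopQsplit}, so the zero row of $E^\infty$ reduces to $\mathbb{Z}\langle 1\rangle$. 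Comparing Poincar\'e series then gives $E^\infty\cong\mathbb{Z}\langle 1\rangle\oplus M$ as bigraded left $H_*(\Omega Q)$-modules, the action being free on the $a_i$ and trivial on the unit. As all groups in sight are free abelian, $H_*(F)$ has this same description as a graded abelian group.

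It remains to promote this to a module isomorphism. Define $\Phi\colon\mathbb{Z}\{a_2,\dots,a_{\ell-1}\}\otimes H_*(\Omega Q)\to H_*(F)$ by $\Phi(a_i\otimes\xi)=\theta_*(\xi\otimes\bar s_*(a_i))$; associativity of $\theta$ (diagram~$(\ref{EAction})$) makes $\Phi$ a map of left $H_*(\Omega Q)$-modules. Because $\theta$ is fibrewise over $P$ (that is, $f\circ\theta=f\circ\mathrm{pr}_F$) it preserves the Serre filtration, and since $\bar s_*(a_i)$ has filtration exactly $|a_i|$ with leading term $a_i\in E^\infty_{|a_i|,0}$, the map $\Phi$ is filtration-preserving and induces on associated graded exactly the inclusion of the free submodule on the $a_i$ identified above. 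A filtration-preserving map inducing an isomorphism on associated graded, with filtrations bounded in each degree, is an isomorphism; hence $\Phi$ identifies $\mathbb{Z}\{a_2,\dots,a_{\ell-1}\}\otimes H_*(\Omega Q)$ with $\widetilde H_*(F)$, which is the asserted isomorphism. I expect the main obstacle to be the precise determination of the differential on $z$ together with the attendant verification that $M$ genuinely survives as a free submodule of $E^\infty$: this is where the hypotheses on $Q$ (Observations~(5)–(6) and the ring isomorphism $H^*(Q)\cong H^*(S^m\times S^{n-m})$) really enter, and it is the step most sensitive to signs and to the choice of basis of $H_*(P)$.
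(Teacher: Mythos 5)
Your proposal is correct, and it rests on the same backbone as the paper's proof: the homology Serre spectral sequence of the principal fibration $\Omega Q\to F\to P$, regarded via Moore's theorem as a spectral sequence of left $H_*(\Omega Q)$-modules, with $a_2,\ldots,a_{\ell-1}$ surviving because of the lift $\bar s$, and the extension/module problem resolved at the end using the action $\theta$. Where you genuinely diverge is in how the two delicate steps are carried out. For the differentials on $a_1$, $a_\ell$ and $z$, the paper compares with the pullback fibration over $S^{m}\vee S^{n-m}$ (to transgress $a_1$ and $a_\ell$) and then dualizes to the cohomology spectral sequence, using multiplicativity and the cup-product coefficients to get $d^{m}(z\otimes 1)=\sum_{|a_i|=n-m}\pm c_{i1}\,a_i\otimes u$ with $c_{\ell 1}=1$ (Observation (6)); you instead use naturality along $q$ into the path-loop fibration of $Q$, which determines these differentials only modulo $\ker(q_*\otimes \mathrm{id})$ --- exactly enough for your bookkeeping, since your key invariant is that $\mathrm{im}(d)\cap M=0$ --- and you force $a_\ell$ to survive to page $n-m$ from the fact that $v$ must be hit, justified by $\delta\simeq\ast$ from Lemma~\ref{loopQsplit}, an input the paper's proof of Proposition~\ref{P4} does not use. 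Your closing step, the explicit module map $a_i\otimes\xi\mapsto\theta_*(\xi\otimes\bar s_*(a_i))$ plus the filtration-and-associated-graded argument, is essentially the paper's proof of Proposition~\ref{Ftype} imported into Proposition~\ref{P4}, whereas the paper finishes by noting the action on $E^{\infty}$ is free and deducing the module statement from that. The trade-off: your route never touches the matrix $(c_{ij})$ or the auxiliary pullback over $S^m\vee S^{n-m}$, at the cost of leaning on Lemma~\ref{loopQsplit} and on the sub/quotient spectral sequence formalism. Three small points to tighten, none of them gaps: the survival argument should exclude $d^{r}(a_\ell\otimes 1)$ for every $m\le r<n-m$, not only $r=m$ (your ``nothing else can kill $v$'' argument does this uniformly); when $m=n-m$ the differential on $z$ has both a $u$- and a $v$-component, as in the paper's Case 2, though your $\ker(q_*\otimes\mathrm{id})$ criterion is stated case-free and still applies; and ``no differential enters $M$'' should be read as $\mathrm{im}(d)\cap M=0$, since $d^m(z\otimes g)$ genuinely has components in $M$ when some middle $c_{i1}\neq 0$ --- your parenthetical justification already says the right thing.
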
 

\begin{proof} 
By a result of Moore~\cite{Mo}, the homology Serre spectral sequence $E$ 
for the principal homotopy fibration sequence \seqmm{\Omega Q}{\delta}{F}{f}{P} 
is a spectral sequence of left $H_*(\Omega Q)$-modules, with  
\begin{equation} 
\label{ESerre} 
E^{2}_{*,*}\cong H_*(P)\otimes H_*(\Omega Q).
\end{equation} 
Here, the left action is induced by $\theta_{\ast}$ and the 
differentials respect the left action of $H_*(\Omega Q)$. 
That is, up to sign, $d^n(f\otimes gh)=(1\otimes g)d^n(f\otimes h)$ whenever 
the differential $d^n$ is defined. We now proceed to calculate the spectral 
sequence. In doing so, it will be helpful to rewrite~(\ref{ESerre}) as 
\begin{equation} 
\label{ESerre2} 
E^{2}_{\ast,\ast}\cong\mathbb{Z}\{1,a_{1},\ldots,a_{\ell},z\}\otimes H_{\ast}(\Omega Q). 
\end{equation} 
\medskip 

\noindent 
\textit{Initial information on the differentials}. 
Consider the composite 
$\seqmm{S^{m}\vee S^{n-m}}{s'}{P}{q}{Q}$. 
By Observation~(4), $(q\circ s')_{\ast}$ is an injection in homology. 
The composite induces a homotopy fibration diagram 
\[\diagram 
     \Omega Q\rto\ddouble & Z\rto\dto 
          & S^{m}\vee S^{n-m}\rto^-{q\circ s'}\dto^{s'} & Q\ddouble \\ 
     \Omega Q\rto^-{\delta} & F\rto^-{f} & P\rto^-{q} & Q 
  \enddiagram\] 
which defines the space $Z$. Since $(q\circ s')_{\ast}$ is an injection 
in homology and there is a coalgebra isomorphism 
$H_{\ast}(Q)\cong H_{\ast}(S^{m}\times S^{n-m})$, 
in the homology Serre spectral sequence for the fibration 
$\seqmm{\Omega Q}{}{Z}{}{S^{m}\vee S^{n-m}}$ the generators 
$\iota_{m},\iota_{n-m}\in H_{\ast}(S^{m}\vee S^{n-m})$ transgress 
to the elements $u,v\in H_{\ast}(\Omega Q)$ respectively, where $u,v$ 
are as in Lemma~\ref{Qhlgy}. Now consider the homology Serre spectral 
sequence for the fibration 
$\seqmm{\Omega Q}{\delta}{F}{f}{P}$. 
By Observation~(3), we may assume that $(s')_{\ast}(\iota_{m})=a_{1}$ 
and $(s')_{\ast}(\iota_{n-m})=a_{l}$, 
so a comparison of spectral sequences implies that the elements 
$a_{1},a_{l}$ transgress to $u,v\in H_{\ast}(\Omega Q)$. That is, 
in terms of $E^{2}_{\ast,\ast}$, we have 
$$
d^{m}(a_1\otimes 1)=1\otimes u,\,\, d^{n-m}(a_\ell\otimes 1)=1\otimes v.
$$ 
Further, by Observation~(3), the map 
$\seqm{J}{s}{P}$ 
induces an injection in homology onto $\{a_{2},\ldots,a_{\ell-1}\}$, 
and it was observed before the statement of the proposition that 
the map $s$ lifts through $f$ to $F$. Therefore the elements $\{a_{2},\ldots,a_{\ell-1}\}$ 
survive the spectral sequence. Consequently, 
\begin{equation} 
  \label{dtinfo} 
  d^{t}(a_{i})=0\ \mbox{for all $t\geq 2$ and $2\leq i\leq\ell-1$}. 
\end{equation} 
\medskip 

\noindent 
\textit{Case 1: $m<n-m$}. For degree reasons, the differentials 
$d^{2},\ldots, d^{m-1}$ are all zero on the elements $a_{1},\ldots,a_{\ell}$, 
so the left action of $H_{\ast}(\Omega Q)$ implies that these differentials 
are identially zero. Therefore  
$$ 
E^{2}_{\ast,\ast}\cong E^{m}_{\ast,\ast}.  
$$ 

For $d^{m}$ we have $d^{m}(a_{1}\otimes 1)=1\otimes u$. The left 
action of $\theta_{\ast}$ implies that for any element $g\in H_{\ast}(\Omega Q)$, 
we have (up to sign), 
$$
d^{m}(a_1\otimes g)= (1\otimes g)d^{m}(a_1\otimes 1)= 
     (1\otimes g)(1\otimes u) = 1\otimes gu.
$$ 
By~(\ref{dtinfo}), $d^{m}(a_{i})=0$ for $2\leq i\leq\ell$. So 
the left action of $\theta_{\ast}$ implies that for 
$d^{m}(a_{i}\otimes g)=0$ for any $2\leq i\leq\ell$ and any $g\in H_*(\Omega Q)$. 
Next, consider the element $z\otimes 1$. 
Dualizing to the cohomology spectral sequence associated with $E$, 
we have for each $i$ such that $|a_i|=n-m$, 
\begin{align*}
d_{m}(a_i^*\otimes u^*)&=(d_{m}(a_i^*\otimes 1))(1\otimes u^*)+ 
    (-1)^{|a_i|}(a_i^*\otimes 1)d_{m}(1\otimes u^*)\\
&=(-1)^{\vert a_{i}\vert} (a_i^*\otimes 1)(a_1^*\otimes 1) = 
     (-1)^{\vert a_{i}\vert} c_{i1}(z^*\otimes 1).
\end{align*} 
This implies that in the homology Serre spectral sequence $E$ we have  
$$
d^{m}(z\otimes 1)=\csum{|a_i|=n-m}{}{(-1)^{\vert a_{i}\vert} c_{i1}(a_i\otimes u)}. 
$$
The left action of $\theta_{\ast}$ therefore implies that 
$$
d^{m}(z\otimes g)=\csum{|a_i|=n-m}{}{(-1)^{\vert a_{i}\vert} c_{i1}(a_i\otimes gu)}  
$$ 
for each $g\in H_*(\Omega Q)$. Therefore, as $c_{\ell 1}=1$ by Observation~(6),  
$c_{\ell 1}(a_\ell\otimes gu)=(a_\ell\otimes gu)$ is identified in $E^{m+1}_{n-m,*}$ 
with a linear combination of elements $a_i\otimes gu$ for $|a_i|=n-m$. Note 
that $a_{1}$ is excluded here since $\vert a_{1}\vert=m$ and in this case we 
have assumed that $m<n-m$. Collectively, we have determined the differential $d^{m}$, 
and obtain an isomorphism of left $H_{\ast}(\Omega Q)$-modules 
$$ 
E^{m+1}_{\ast,\ast}\cong\mathbb{Z}\{a_{2},\ldots,a_{\ell}\}\otimes H_{\ast}(\Omega Q). 
$$ 

Continuing, by~(\ref{dtinfo}), $d^{m+1},\ldots,d^{n-m-1}$ are all identically 
zero on the elements $a_{2},\ldots,a_{\ell-1}$ and for degree reasons, 
$d^{m+1},\ldots,d^{n-m-1}$ are all identically zero on $a_{\ell}$. So the left 
action of $\theta_{\ast}$ implies that these differentials are identically zero on 
all elements. Therefore there is an isomorphism 
$$ 
E^{m+1}_{\ast,\ast}\cong E^{n-m}_{\ast,\ast}. 
$$ 

For $d^{n-m}$, by~(\ref{dtinfo}), $d^{n-m}(a_{i})=0$ for 
$2\leq i\leq\ell -1$, so the left action of $\theta_{\ast}$ implies that 
$d^{n-m}(a_{i}\otimes g)=0$ for any $2\leq i\leq\ell-1$ and for 
any $g\in H_{\ast}(\Omega Q)$. From the initial calculation of differentials, 
we obtained $d^{n-m}(a_{\ell}\otimes 1)=1\otimes v$. The left 
action of $\theta_{\ast}$ therefore implies that for any element 
$g\in H_{\ast}(\Omega Q)$ we have (up to sign), 
$$
d^{n-m}(a_{\ell}\otimes g)= (1\otimes g)d^{n-m}(a_{\ell}\otimes 1)= 
     (1\otimes g)(1\otimes v) = 1\otimes gv.
$$ 
Thus we have determined the differential $d^{n-m}$, and obtain an isomorphism 
of left $H_{\ast}(\Omega Q)$-modules 
$$ 
E^{n-m+1}_{\ast,\ast}\cong\mathbb{Z}\{a_{2},\ldots, a_{\ell-1}\}\otimes H_{*}(\Omega Q). 
$$ 

Finally, by~(\ref{dtinfo}), the differentials $d^{t}$ for $t>n-m$ are all 
identically zero on $a_{2},\ldots,a_{\ell-1}$, so the left acton of $\theta_{\ast}$ 
implies that these differentials are identically zero on all elements. Hence 
$$ 
E^{\infty}_{\ast,\ast}\cong E^{n-m+1}_{\ast,\ast}.
$$ 
Since there is no torsion in $E^{\infty}_{*,*}$, there is no extension problem,
and we have
\begin{equation}
\label{EIso}
H_{\ast}(F)\cong \cplus{i+j=*}{}{E^{\infty}_{i,j}}
\cong\mathbb{Z}\{a_{2},\ldots,a_{\ell-1}\}\otimes H_{\ast}(\Omega Q).
\end{equation}
To see that this is an isomorphism of left $H_{*}(\Omega Q)$-modules,
recall that the left action 
\seqm{H_{\ast}(\Omega Q)\otimes E^{\infty}_{i,j}}{}{E^{\infty}_{i,j+\ast}}
coincides with the left action of associated graded objects
$$
\seqm{H_{\ast}(\Omega Q)\otimes \frac{\mc F_{i,i+j}}{\mc F_{i-1,i+j}}}{}
{\frac{\mc F_{i,i+j+\ast}}{\mc F_{i-1,i+j+\ast}}\cong E^{\infty}_{i,j+\ast}}
$$
induced by the action \seqm{H_{*}(\Omega Q)\otimes H_{i+j}(F)}{\mu_*}{H_{i+j+\ast}(F)}, 
where $\mc F_{i,j}=\mc F_i H_j(F)\subseteq H_j(F)$ is the increasing filtration
associated with our spectral sequence.
Observe from the calculations above that the action on the $E^{\infty}_{\ast,\ast}$ is free,
so the action on the associated graded objects is free.  
Therefore the action $\mu_*$ must also be free, 
and so the isomorphism~(\ref{EIso}) is one of left $H_{*}(\Omega Q)$-modules.

\medskip 

\noindent 
\textit{Case 2: $m=n-m$}. This case is simpler. 
We have $n=2m$ and $|u|=|v|=m-1$. So the only differential which 
comes into play is $d^{m}$. This time $d^{m}(z\otimes g)$ is the sum 
of linear combinations of the elements   
$c_{i1}(a_i\otimes gu)$ and $c_{i\ell}(a_i\otimes gv)$ for all $i$, 
where 
$c_{\ell 1}=1$, $c_{1\ell}=(-1)^{m(n-m)}=-(-1)^{|u||v|}$ and $c_{11}=c_{\ell\ell}=0$.
Therefore the elements $a_\ell\otimes gu-(-1)^{|u||v|}a_1\otimes gv$
are identified in $E^{m+1}_{m,*}$ with a linear combination of 
elements of the form $a_i\otimes gu$ or $a_i\otimes gv$ for $2\leq i\leq \ell-1$,
and the calculation goes through as before. 
\end{proof}
 
Now we refine the homotopy decomposition 
$\Omega P\simeq\Omega Q\times\Omega F$ 
of Lemma~\ref{loopQsplit} by identifying the homotopy type of $F$. 
For spaces $X$ and $Y$, the \emph{left half-smash} of $X$ and $Y$ is defined by 
$$ 
X\ltimes Y=(X\times Y)/(\ast\times Y).  
$$ 
It is well-known that if $Y$ is a suspension then there is a homotopy equivalence 
$$ 
X\ltimes Y\simeq Y\vee (X\wedge Y). 
$$ 

\begin{proposition} 
\label{Ftype} 
There is a homotopy equivalence 
$$ 
F\simeq \Omega Q\ltimes J. 
$$ 
\end{proposition}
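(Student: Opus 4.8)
The plan is to build an explicit map $\seqm{\Omega Q\ltimes J}{e}{F}$ out of the principal action $\theta$ and the lift $\bar s$, to verify that $e$ induces an isomorphism on integral homology, and then to conclude that $e$ is a homotopy equivalence by the Whitehead theorem once both spaces are seen to be simply-connected.

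To construct $e$, I would start with the composite
\[
g\colon\seqmm{\Omega Q\times J}{\ID\times\bar s}{\Omega Q\times F}{\theta}{F}.
\]
Restricting $g$ to $\Omega Q\times\ast$ gives the map sending $\gamma$ to $\theta(\gamma,\bar s(\ast))=\theta(\gamma,\ast)$, and evaluating the action diagram~(\ref{EAction}) with its second loop coordinate at the basepoint shows that this restriction is homotopic to $\delta$. By Lemma~\ref{loopQsplit} the map $\delta$ is null homotopic, so $g|_{\Omega Q\times\ast}$ is null homotopic. Since $\inclusion{\Omega Q\times\ast}{\Omega Q\times J}$ is a cofibration, the homotopy extension property lets me deform $g$ to a map $g'$ that is constant on $\Omega Q\times\ast$; such a $g'$ factors through the quotient $\Omega Q\ltimes J=(\Omega Q\times J)/(\Omega Q\times\ast)$ to give $e$. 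By construction the composite $\seqmm{\Omega Q\times J}{\pi}{\Omega Q\ltimes J}{e}{F}$ is homotopic to $\theta\circ(\ID\times\bar s)$, where $\pi$ denotes the quotient map. It is precisely the vanishing of $\delta$ that makes $g$ descend to the half-smash.

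Next I would pass to homology. Since $\pi$ collapses a retract of $\Omega Q\times J$, the homomorphism $\pi_{\ast}$ is surjective and $\widetilde{H}_{\ast}(\Omega Q\ltimes J)\cong H_{\ast}(\Omega Q)\otimes\widetilde{H}_{\ast}(J)$; by Observation~(3) we may identify $\widetilde{H}_{\ast}(J)$ with $\mathbb{Z}\{a_2,\dots,a_{\ell-1}\}$, so this group is torsion-free, finitely generated in each degree, and of the same rank in each degree as $H_{\ast}(F)$ as computed in Proposition~\ref{P4}. Because $e\circ\pi\simeq\theta\circ(\ID\times\bar s)$ and $\pi_{\ast}$ is onto, the image of $e_{\ast}$ equals the image of $\theta_{\ast}\circ(\ID\otimes\bar s_{\ast})$, which is exactly the left $H_{\ast}(\Omega Q)$-submodule of $H_{\ast}(F)$ generated by $\bar s_{\ast}(H_{\ast}(J))$. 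By Proposition~\ref{P4} that submodule is all of $H_{\ast}(F)$, so $e_{\ast}$ is surjective; a surjection between torsion-free abelian groups that are finitely generated and of equal rank in each degree is an isomorphism, so $e$ is a homology isomorphism.

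Finally, both spaces are simply-connected: $F$ is the homotopy fibre of a map between $1$-connected spaces, and when $m=2$ the map $q_{\ast}$ is onto in degree $2$ homology by Observation~(5), hence onto on $\pi_2$, forcing $\pi_1(F)=0$; and $\Omega Q\ltimes J$ sits in a split cofibration $\seqmm{J}{}{\Omega Q\ltimes J}{}{\Omega Q\wedge J}$ whose two ends are simply-connected, so it is too. Thus $e$ is a homotopy equivalence by the Whitehead theorem. The step I expect to require the most care is the proof that $e_{\ast}$ is surjective: this is not formal, and it rests on the module description of $H_{\ast}(F)$ from Proposition~\ref{P4} together with the vanishing of $\delta$ from Lemma~\ref{loopQsplit}, which is also what allowed $e$ to be defined in the first place.
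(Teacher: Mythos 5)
Your proposal is correct and follows essentially the same route as the paper: you build the map from the principal action $\theta$ composed with $\ID\times\bar s$, use the null homotopy of $\delta$ from Lemma~\ref{loopQsplit} to factor it through $\Omega Q\ltimes J$, and then invoke the module isomorphism of Proposition~\ref{P4} to see it is a homology isomorphism, hence an equivalence. The extra details you supply (surjectivity plus rank comparison, and the simple-connectivity check for Whitehead) are just an expanded version of the paper's one-line appeal to Proposition~\ref{P4}.
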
 

\begin{proof} 
Using the lift $\seqm{J}{\bar s}{F}$ of $\seqm{J}{s}{P}$ and the 
homotopy action $\seqm{\Omega Q\times F}{\theta}{F}$, define 
$\lambda$ as the composite
$$
\lambda\,\colon\seqmm{\Omega Q\times J}{\ID\times \bar s} 
    {\Omega Q\times F}{\theta}{F}.
$$ 
By~(\ref{EAction}), the restriction of $\theta$ to $\Omega Q$ is 
homotopic to $\delta$, which by Lemma~\ref{loopQsplit} is null homotopic. 
Therefore the composite 
$$
\seqmm{\Omega Q\times *}{\ID\times *}{\Omega Q\times J}{\lambda}{F}
$$
is null homotopic. Since the homotopy cofibre of $\ID\times\ast$ is 
$\Omega Q\ltimes F$, the map $\lambda$ extends to a map $\hat\lambda$ 
that makes the following diagram homotopy commute 
\[\diagram
\Omega Q\times *\rto^-{\ID\times *} 
& \Omega Q\times J\rto^{}\dto^{\lambda}
& \Omega Q\ltimes J\ar@{.>}[dl]^{\hat\lambda}\\
& F. 
\enddiagram\] 
By definition, $\lambda=\theta\circ(\ID\times\bar s)$, so Proposition~\ref{P4} 
implies that $\hat\lambda_{\ast}$ is an isomorphism. Thus $\hat\lambda$ 
is a homotopy equivalence. 
\end{proof} 

\begin{theorem}
\label{TMain}
Let $P\in\mathcal{P}$ and suppose that $P$ is $(m-1)$-connected 
and $n$-dimensional. Then the following hold: 
\begin{romanlist}
\item there is a homotopy equivalence  
$$
\Omega P\simeq \Omega(S^{m}\times S^{n-m})\times 
\Omega(\Omega(S^{m}\times S^{n-m})\ltimes J),  
$$
which, if $J$ is a suspension, refines to a homotopy equivalence  
$$
\Omega P\simeq \Omega(S^{m}\times S^{n-m})\times \Omega(J\vee (J\wedge \Omega(S^{m}\times S^{n-m}))); 
$$
\item the map $\seqm{\Omega\bar P}{\Omega i}{\Omega P}$ has a right 
homotopy inverse. 
\end{romanlist}
\end{theorem}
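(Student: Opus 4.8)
The plan is to read part~(i) straight off the preceding results and to obtain part~(ii) from the same decomposition together with one extra lifting argument.

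For part~(i): Lemma~\ref{loopQsplit} supplies $\Omega P\simeq\Omega Q\times\Omega F$ and $\Omega Q\simeq\Omega(S^{m}\times S^{n-m})$, while Proposition~\ref{Ftype} identifies $F\simeq\Omega Q\ltimes J$. Substituting and replacing $\Omega Q$ by $\Omega(S^{m}\times S^{n-m})$ throughout gives $\Omega P\simeq\Omega(S^{m}\times S^{n-m})\times\Omega\bigl(\Omega(S^{m}\times S^{n-m})\ltimes J\bigr)$. If $J$ is a suspension, the identity $X\ltimes Y\simeq Y\vee(X\wedge Y)$ recorded before Proposition~\ref{Ftype}, applied with $X=\Omega(S^{m}\times S^{n-m})$ and $Y=J$, rewrites the second factor as $\Omega\bigl(J\vee(J\wedge\Omega(S^{m}\times S^{n-m}))\bigr)$. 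That is all of~(i).

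For part~(ii) I would first record the decomposition of Lemma~\ref{loopQsplit} in the sharper form $\Omega P\simeq\Omega F\times\Omega Q$, where $\Omega Q$ is split off by an explicit section $\sigma$ of $\Omega q$ and $\Omega F$ is included via $\Omega f$; by the proof of Lemma~\ref{loopQsplit} one may take $\sigma=\Omega s'\circ\phi\circ g$ for $g$ a homotopy inverse of the equivalence $\Omega q\circ\Omega s'\circ\phi$, and the resulting equivalence $e\colon\Omega F\times\Omega Q\to\Omega P$ is $(\omega,\eta)\mapsto\Omega f(\omega)\cdot\sigma(\eta)$. Since $s'$ is by definition the composite $S^{m}\vee S^{n-m}\hookrightarrow\bar P\overset{i}{\to}P$, the section factors as $\sigma=\Omega i\circ\tau$, where $\tau\colon\Omega Q\to\Omega\bar P$ is $\Omega$ of the wedge-summand inclusion $S^{m}\vee S^{n-m}\hookrightarrow\bar P$ precomposed with $\phi\circ g$; thus the $\Omega Q$-factor lifts through $\Omega i$ for free. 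Granting that $\Omega f\colon\Omega F\to\Omega P$ also lifts through $\Omega i$, say to $r_{F}\colon\Omega F\to\Omega\bar P$, the required right homotopy inverse is $r=\mu\circ(r_{F}\times\tau)\circ e^{-1}$: because $\Omega i$ is an $H$-map, $\Omega i\circ r\simeq\mu\circ\bigl((\Omega i\circ r_{F})\times(\Omega i\circ\tau)\bigr)\circ e^{-1}\simeq\mu\circ(\Omega f\times\sigma)\circ e^{-1}=e\circ e^{-1}\simeq\mathrm{id}_{\Omega P}$.

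Hence all of~(ii) comes down to lifting $\Omega f$ through $\Omega i$, and this is the step I expect to be the real work. Replacing $i$ by a fibration with homotopy fibre $E$, such a lift exists if and only if the connecting composite $\Omega F\overset{\Omega f}{\to}\Omega P\overset{\partial}{\to}E$ is nullhomotopic. By Observation~(2), $P$ is the mapping cone of $\alpha\colon S^{n-1}\to\bar P$, so $E$ is $(n-2)$-connected with $\pi_{n-1}(E)\cong\mathbb{Z}$; and Observation~(6), which says $a_{1}^{*}a_{\ell}^{*}$ generates $H^{n}(P)$, pins the $(S^{m}\vee S^{n-m})$-component of $\alpha$ down to $\pm[\iota_{m},\iota_{n-m}]$, so $[\alpha]$ has infinite order in $\pi_{n-1}(\bar P)$ and $\partial$ is already trivial on $\pi_{n-1}(\Omega P)$. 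To get the nullhomotopy in full I would use the explicit model $F\simeq\Omega Q\ltimes J$ from Propositions~\ref{P4} and~\ref{Ftype}: that equivalence is assembled from the lift $\bar s\colon J\to F$ of $s=i\circ(J\hookrightarrow\bar P)$, which visibly factors through $i$, together with the principal $\Omega Q$-action $\theta$, whose restriction to the fibre is $\delta$ and hence nullhomotopic by Lemma~\ref{loopQsplit}; the task is to patch these two nullhomotopies, across the half-smash and using the connectivity of $E$ to control the higher cells, into a single nullhomotopy of $\partial\circ\Omega f$ on all of $\Omega F$. I anticipate this patching to be the main obstacle, and it is precisely here that the Poincar\'{e}-duality hypotheses built into the definition of $\mathcal{P}$ — encoded through Observation~(6) — are essential.
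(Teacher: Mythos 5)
Part~(i) of your proposal is fine and coincides with the paper's argument. The problem is part~(ii): you correctly reduce everything to lifting $\Omega f\colon\Omega F\to\Omega P$ through $\Omega i$ (the $\Omega Q$-factor does lift for free, exactly as you say, and your assembly of the right homotopy inverse from such a lift matches the paper's final diagram), but the lifting itself is precisely the substance of part~(ii) and you do not prove it --- you end by saying you would ``patch'' the two nullhomotopies across the half-smash ``using the connectivity of $E$ to control the higher cells'' and that you anticipate this to be the main obstacle. That is a genuine gap, and the sketched route is unlikely to close it: the homotopy fibre $E$ of $i$ is only $(n-2)$-connected, while $\Omega F\simeq\Omega(\Omega Q\ltimes J)$ has cells in arbitrarily high dimensions, so obstructions to the nullhomotopy of $\partial\circ\Omega f$ can occur in every degree $\geq n-1$ and connectivity alone controls nothing beyond the bottom one. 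Your remark that Observation~(6) forces the $(S^m\vee S^{n-m})$-component of the attaching map to be $\pm[\iota_m,\iota_{n-m}]$ only disposes of the first obstruction.

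The missing idea in the paper is to work before looping and avoid obstruction theory altogether: pull the fibration $\seqmmm{\Omega Q}{\delta}{F}{f}{P}$ back along $i$, i.e.\ let $\bar F$ be the homotopy fibre of $\bar q=q\circ i\colon\seqm{\bar P}{}{Q}$, with comparison map $\tau\colon\seqm{\bar F}{}{F}$ and $\bar f\colon\seqm{\bar F}{}{\bar P}$ satisfying $i\circ\bar f\simeq f\circ\tau$. Since $q\circ s'=\bar q\circ j$, the map $\Omega\bar q$ also has a right homotopy inverse, so the connecting map $\bar\delta$ is null homotopic, and one can run the same half-smash construction upstairs: the lift $\bar r\colon\seqm{J}{}{\bar F}$ of $r$ and the principal action $\bar\theta$ give a map $\seqm{\Omega Q\ltimes J}{\tilde\lambda}{\bar F}$ whose composite with $\tau$ is a choice of the equivalence $\hat\lambda$ of Proposition~\ref{Ftype}. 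Hence $\tau$ has a right homotopy inverse $\sigma$, and $\Omega\bar f\circ\Omega\sigma$ is exactly the lift of $\Omega f$ through $\Omega i$ that your argument needs. Without this pullback step (or some substitute for it), your proof of part~(ii) is incomplete.
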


\begin{proof} 
For part~(i), by Lemma~\ref{loopQsplit}, $\Omega P\simeq\Omega Q\times\Omega F$ 
and $\Omega Q\simeq\Omega S^{m}\times\Omega S^{n-m}$, and by 
Proposition~\ref{Ftype}, $F\simeq \Omega Q\ltimes J$. Thus 
$$ 
\Omega P\simeq\Omega S^{m}\times\Omega S^{n-m}\times 
    \Omega((\Omega S^{m}\times\Omega S^{n-m})\ltimes J). 
$$ 
If $J$ is a suspension, this decomposition refines due to the fact  
that $\Omega Q\ltimes J\simeq J\vee (J\wedge\Omega Q)$. 

For part~(ii), define $\bar q$ as the composite 
$$
\bar q\,\colon\seqmm{\bar P}{i}{P}{q}{Q}. 
$$ 
From this composite we obtain a homotopy pullback diagram 
\begin{equation} 
\label{FbarFdgrm} 
\diagram 
\Omega Q\rto^{\bar\delta}\ddouble & \bar F\rto^{\bar f}\dto^{\tau} 
     & \bar P\rto^{\bar q}\dto^{i} & Q\ddouble\\
\Omega Q\rto^{\delta} & F\rto^{f} & P\rto^{q} & Q 
\enddiagram 
\end{equation} 
which defines the space $\bar F$ and the maps $\bar f$, $\bar\delta$ and $\tau$. 
In particular, this is a homotopy commutative diagram of principal fibration 
sequences, so if 
$\bar\theta\,\colon\seqm{\Omega Q\times\bar F}{}{\bar F}$ 
is the homotopy action for the top fibration sequence, then there is a 
homotopy commutative diagram of actions 
\[\diagram 
         \Omega Q\times\bar F\rto^-{\bar\theta}\dto^{\ID\times\tau} 
                & \bar F\dto^{\tau} \\   
        \Omega Q\times F\rto^-{\theta} & F. 
  \enddiagram\] 

By definition, the map $\seqm{J}{s}{P}$ factors as the composite 
$\seqmm{J}{r}{\bar P}{i}{P}$,  
where $r$ is the inclusion of the wedge summand in 
$\bar P\simeq J\vee (S^{m}\vee S^{n-m})$. Since $s$ lifts 
through $f$ to the map $\seqm{J}{\bar s}{F}$, the definition 
of $\bar F$ as a homotopy pullback in~(\ref{FbarFdgrm}) implies that there 
is a pullback map $\bar r\,\colon\seqm{J}{}{\bar F}$ such that 
$\bar f\circ\bar r\simeq r$ and $\tau\circ\bar r\simeq\bar s$. 
Combining this with the preceding diagram, we obtain a homotopy 
commutative diagram 
\begin{equation} 
\label{actiondgrm} 
  \diagram 
         \Omega Q\times J\rto^-{\ID\times\bar r}\ddouble 
                & \Omega Q\times\bar F\rto^-{\bar\theta}\dto^{\ID\times\tau} 
                & \bar F\dto^{\tau} \\   
        \Omega Q\times J\rto^-{\ID\times\bar s} & \Omega Q\times F\rto^-{\theta} & F. 
  \enddiagram 
\end{equation}  

By definition, the map $\seqm{S^{m}\vee S^{n-m}}{s'}{P}$ factors as 
the composite 
$\seqmm{S^{m}\vee S^{n-m}}{j}{\bar P}{i}{P}$, 
where $j$ is the inclusion of the wedge summand in 
$\bar P\simeq J\vee (S^{m}\vee S^{n-m})$. By Lemma~\ref{loopQsplit}, 
$\Omega(q\circ s')$ has a right homotopy inverse. As 
$\bar q=q\circ i$, we have $q\circ s'=q\circ i\circ j=\bar q\circ j$, 
so $\Omega(\bar q\circ j)$ has a right homotopy inverse. Consequently, 
$\Omega\bar q$ has a right homotopy inverse, which implies that in 
the homotopy fibration 
$\seqmm{\Omega P}{\Omega\bar q}{\Omega Q}{\bar\delta}{\bar F}$, 
the map $\bar\delta$ is null homotopic. 

Let $\bar\lambda$ be the composite along the top row of~(\ref{actiondgrm}), 
$$ 
\bar\lambda\,\colon\seqmm{\Omega Q\times J}{\ID\times\bar r} 
     {\Omega Q\times F}{\bar\theta}{\bar F}. 
$$ 
Since $\bar{\theta}$ is a homotopy action, its restriction to $\Omega Q$ 
is $\bar\delta$. Therefore the restriction of $\bar\lambda$ to $\Omega Q$ 
is $\bar\delta$, which is null homotopic. Thus there is a homotopy 
commutative diagram 
\[\diagram 
       \Omega Q\times\ast\rto^-{\ID\times\ast} 
          & \Omega Q\times J\rto\dto^{\bar\lambda} 
          & \Omega Q\ltimes J\ar@{.>}[dl]^{\tilde\lambda} \\ 
       & F & 
  \enddiagram\] 
where the top row is a homotopy cofibration and $\tilde\lambda$ 
is an extension of $\bar\lambda$. Now let $\gamma$ be the composite 
$$ 
\gamma\,\colon\seqmm{\Omega Q\ltimes\bar F}{\tilde\lambda}{\bar F}{\tau}{F}. 
$$ 
Observe that $\gamma$ is a choice of the extension $\hat\lambda$ in 
the proof of Proposition~\ref{Ftype}. Thus $\gamma$ induces an isomorphism 
in homology and so is a homotopy equivalence. Consequently, 
the map $\tau$ has a right homotopy inverse 
$\sigma\,\colon\seqm{F}{}{\bar F}$. 

Finally, consider the diagram 
\[\diagram 
        (\Omega S^{m}\times\Omega S^{n-m})\times\Omega F   
                    \rto^-{\ID\times\Omega\sigma}\drdouble   
             & (\Omega S^{m}\times\Omega S^{n-m})\times\Omega\bar F 
                    \rto^-{\Omega j\times\Omega\bar f}\dto^{\ID\times\Omega\tau} 
             & \Omega\bar P\times\Omega\bar P\rto^-{\mu}\dto^{\Omega i\times\Omega i} 
             & \Omega\bar P\dto^{\Omega i} \\ 
        & (\Omega S^{m}\times\Omega S^{n-m})\times\Omega F 
                   \rto^-{\Omega s'\times\Omega f} 
              & \Omega P\times\Omega P\rto^-{\mu} & \Omega P 
  \enddiagram\] 
where $\mu$ is the standard loop multiplication. The left triangle homotopy 
commutes since $\sigma$ is a right homotopy inverse of $\tau$. The 
middle square homotopy commutes since, by definition, $s'=i\circ j$, 
and by~(\ref{FbarFdgrm}), $f\simeq i\circ\bar f$. The right square homotopy 
commutes since $\Omega i$ is a loop map. By part~(i), the bottom row is a 
homotopy equivalence, so the homotopy commutativity of the diagram 
implies that~$\Omega i$ has a right homotopy inverse. 
\end{proof}

\section{Consequences}

In this section we apply Theorem~\ref{TMain} to two classes of examples, 
first to certain connected sums, and then to certain Poincar\'{e} duality 
complexes, and prove Theorems~\ref{Walldecomp}. 

If $M$ is a closed oriented $n$-dimensional manifold, let $\bar M$ 
be the $(n-1)$-skeleton of $M$. 
In particular, $M$ is homotopy equivalent to $M-\ast$,
and is obtained from $\bar M$ by attaching a single $n$-cell. 
Observe that if $N$ is a closed oriented 
$n$-dimensional manifold and there is a ring isomorphism 
$H^*(N)\cong H^*(S^{m}\times S^{n-m})$, then $\bar N\simeq S^{m}\vee S^{n-m}$. 
Denote the connected sum of two closed oriented $n$-dimensional 
manifolds $M$ and $N$ by $M\#N$. Observe that the $(n-1)$-skeleton 
of $M\#N$ is homotopy equivalent to $\bar M\vee\bar N$. Let 
$$ 
i\,\colon\seqm{\bar M\vee\bar N}{}{M\#N} 
$$ 
be the skeletal inclusion.

\begin{proof}[Proof of Theorem~\ref{connsum}]
We will show that $M\#N\in\mathcal{P}$. Let 
$P=M\#N$, let $\bar P$ be the $(n-1)$-skeleton of $P$, and let 
$\seqm{\bar P}{i}{P}$ be the skeletal inclusion. By the definitions 
of $M$ and $N$, $P$ is an $(m-1)$-connected, $n$-dimensional 
$CW$-complex. Since $P=M\#N$ is a closed oriented manifold, it satisfies 
Poincar\'{e} duality,  which implies that $\bar P\simeq\bar M\vee\bar N$ 
is actually $(n-m)$-dimensional. Note that as $m>1$ we have $n-m<n-1$, 
so $H_*(P)$ is torsion-free if and only if $H_*(\bar P)$ is torsion-free. 
But as $H_*(M)$ is torsion-free, so is $H_*(\bar M)$, which implies that 
$\bar P\simeq\bar M\vee\bar N\simeq\bar M\vee (S^{m}\vee S^{n-m})$ 
also has $H_*(\bar P)$ torsion-free. Thus $H_*(P)$ is torsion-free. 

Now if $J=\bar M$ then as $\bar N\simeq S^{m}\vee S^{n-m}$, we have 
$\bar P\simeq J\vee(S^{m}\vee S^{n-m})$. Let $Q$ be the cofibre of the 
composite $\seqmm{J}{}{\bar P}{i}{P}$, that is, $Q$ is the cofibre of the 
composite $\seqmm{\bar M}{}{\bar M\vee\bar N}{i}{M\#N}$. Then 
$Q\simeq N$, which implies that 
$H^*(Q)\cong H^*(N)\cong H^*(S^{m}\times S^{n-m})$. 
Thus $P=M\#N$ satisfies all the conditions of Definition~\ref{Pdef}, 
so $P\in\mathcal{P}$. The assertions of the proposition are now all 
direct applications of Theorem~\ref{TMain}.  
\end{proof}

\begin{example}
As an example of Theorem~\ref{connsum} in action, recall that 
an $n$-dimensional manifold $M$ is a connected sum of sphere products if 
$$
M\cong (S^{m_{1}}\times S^{n-m_{1}})\#\cdots\#(S^{m_{k}}\times S^{n-m_{k}})
$$ 
for some integers $m_{1},\ldots,m_{k}$. 
Let $M_{1}=(S^{m_{1}}\times S^{n-m_{1}})\#\cdots\#(S^{m_{k-1}}\times S^{n-m_{k-1}})$ 
and $N=S^{m_{k}}\times S^{n-m_{k}}$ so that $M=M_{1}\# N$. Observe that  
$\bar M_{1}=\bigvee_{i=1}^{k-1}(S^{m_{i}}\vee S^{n-m_{i}})$. 
So by Theorem~\ref{connsum}, there is a homotopy equivalence 
\[\Omega M\simeq\Omega(M_{1}\#N)\simeq\Omega(S^{m_{k}}\times S^{n-m_{k}})\times 
      \Omega(\bar M_{1}\vee(\bar M_{1}\wedge\Omega(S^{m_{k}}\times S^{n-m_{k}}))).\] 
 
\end{example}

Recall that $P$ is a \emph{Poincar\'e duality complex} if it has the homotopy type of 
a finite $CW$-complex and its cohomology ring $H^*(P;R)$ satisfies
Poincar\'e duality for all coefficient rings $R$. In particular every
oriented simply-connected manifold is a Poincar\'e duality complex.  

\begin{proposition}
\label{C1}
Fix $1<m\leq n$. If $m=n-m$, assume that $m\notin\{2,4,8\}$.
Let $P$ be an $(m-1)$-connected $n$-dimensional Poincar\'e duality complex   
such that $(n-1)$-skeleton $\bar P$ of $P$ has the homotopy type of a wedge 
of spheres. Let $i\,\colon\seqm{\bar P}{}{P}$ be the skeletal inclusion. Then 
the following hold: 
\begin{romanlist} 
\item there is a homotopy equivalence 
$$
\Omega P\simeq \Omega(S^{m}\times S^{n-m})\times \Omega(J\vee (J\wedge \Omega(S^{m}\times S^{n-m})))  
$$
where $J$ is obtained from $\bar P$ by quotienting out a copy of $S^{m}\vee S^{n-m}$; 
\item the map 
$\seqm{\Omega\bar P}{\Omega i}{\Omega P}$ 
has a right homotopy inverse. 
\end{romanlist} 
Consequently, the homotopy type of $\Omega P$ depends only on the 
homotopy type of~$\bar P$. 
\end{proposition}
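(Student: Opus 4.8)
The strategy is to show that $P$ belongs to the class $\mathcal{P}$ of Definition~\ref{Pdef}, for a suitably chosen splitting $\bar P\simeq J\vee(S^{m}\vee S^{n-m})$, and then to read parts~(i) and~(ii) directly off Theorem~\ref{TMain}. First I would settle the homological input. Since $\bar P$ is a wedge of spheres, $H_{*}(\bar P)$ is free and all its positive-degree cup products vanish, and as $H_{k}(\bar P)\cong H_{k}(P)$ for $k\leq n-2$ the group $H_{k}(P)$ is free in that range. In the remaining degrees: $P$ is simply connected (as $m\geq2$), so $H_{n-1}(P)\cong H^{1}(P)=0$; $P$ is an oriented $n$-dimensional Poincar\'e duality complex, so $H_{n}(P)\cong\mathbb{Z}$; and Poincar\'e duality gives $H_{k}(P)\cong H^{n-k}(P)=0$ for $n-m<k<n-1$. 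Hence $\widetilde H_{*}(P)\cong\mathbb{Z}\{a_{1},\dots,a_{\ell},z\}$ is torsion-free with $|a_{i}|\in[m,n-m]$ and $|z|=n$, precisely the shape required by Definition~\ref{Pdef}. Using that a Poincar\'e duality complex of formal dimension $\geq3$ has a $CW$ structure with a single top cell, I would also conclude that the spheres in $\bar P$ all have dimension in $[m,n-m]$ (from $H^{k}(\bar P)=H^{k}(P)=0$ for $n-m<k\leq n-2$ and $H^{n-1}(\bar P)=0$), that there is at least one $S^{m}$-summand and, dually, at least one $S^{n-m}$-summand, and that there is a homotopy cofibration $\seqmm{S^{n-1}}{\alpha}{\bar P}{i}{P}$.

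The heart of the argument is the choice of the summand $S^{m}\vee S^{n-m}$, exploiting that Poincar\'e duality makes the cup-product pairing $H^{m}(P)\otimes H^{n-m}(P)\to H^{n}(P)\cong\mathbb{Z}$ unimodular. When $m<n-m$, elementary linear algebra over $\mathbb{Z}$ supplies a primitive $x\in H^{m}(P)$ and a primitive $y\in H^{n-m}(P)$ with $x\cup y$ a generator of $H^{n}(P)$; then I would re-split $\bar P$ so that one $S^{m}$-summand is dual to $x$ and one $S^{n-m}$-summand is dual to $y$, let $J$ be the remaining wedge of spheres, and form $Q=P/J$. The map $q^{*}\colon H^{*}(Q)\hookrightarrow H^{*}(P)$ then has image $\mathbb{Z}\{x,y,z^{*}\}$; the products $x^{2}\in H^{2m}(P)$ and $y^{2}\in H^{2(n-m)}(P)$ vanish for degree reasons (and $x^{2}$ also because $H^{2m}(P)\cong H^{2m}(\bar P)$ has trivial products when $2m\leq n-2$), while $x\cup y=\pm z^{*}$, so $H^{*}(Q)\cong H^{*}(S^{m}\times S^{n-m})$ and $P\in\mathcal{P}$. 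When $m=n-m$ (so $n=2m$ and $\bar P\simeq\bigvee S^{m}$) the pairing becomes a unimodular $(-1)^{m}$-symmetric form on $H^{m}(P)$: if $m$ is odd it is alternating, hence symplectic, and splits off a hyperbolic plane; if $m$ is even, then since $m\notin\{2,4,8\}$, Adams' solution of the Hopf invariant one problem forces $\alpha$ to have even Hopf invariant on each $S^{m}$-summand, so every cup-square in $H^{m}(P)$ is an even multiple of $z^{*}$, the form is even and unimodular, and one again extracts a hyperbolic summand $\langle x,y\rangle$. Splitting $\bar P$ along such a hyperbolic pair gives $Q=P/J$ with $H^{*}(Q)\cong H^{*}(S^{m}\times S^{m})$, so $P\in\mathcal{P}$ here as well.

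With $P\in\mathcal{P}$ established, Theorem~\ref{TMain}(i) gives the homotopy equivalence of part~(i); since $J$ is a wedge of spheres, hence a suspension, this refines to $\Omega P\simeq\Omega(S^{m}\times S^{n-m})\times\Omega(J\vee(J\wedge\Omega(S^{m}\times S^{n-m})))$, and Theorem~\ref{TMain}(ii) gives part~(ii). For the final assertion, $J$ is simply $\bar P$ with one $S^{m}$- and one $S^{n-m}$-summand removed, so as a wedge of spheres it is determined up to homotopy by $\bar P$; since $S^{m}\times S^{n-m}$ depends only on $m$ and $n$, the right-hand side of~(i) depends only on the homotopy type of $\bar P$.

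The step I expect to be the main obstacle is the choice of $S^{m}\vee S^{n-m}$ in the case $m=n-m$ with $m$ even: one must split the intersection form into a hyperbolic plane and its complement, and the excluded values $m\in\{2,4,8\}$ are exactly those for which the cup-square of a degree-$m$ generator can equal the fundamental class (the Hopf invariant one phenomenon), which would obstruct the construction of $Q$. The rest is bookkeeping with Poincar\'e duality and the (co)homology of wedges of spheres.
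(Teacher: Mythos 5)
Your overall route is the paper's: verify that $P$ lies in $\mathcal{P}$ and then quote Theorem~\ref{TMain}. In the cases $m<n-m$, and $n=2m$ with $m$ odd, your argument is essentially the paper's argument in different clothing: the dual pair $x^{*},y^{*}$ with $x^{*}y^{*}=e^{*}$ that you extract from unimodularity of the cup-product pairing is exactly what Lemma~\ref{LPD} produces via cap products (your symplectic-splitting phrasing when $m$ is odd is, if anything, cleaner, since a hyperbolic pair in a symplectic lattice automatically extends to a basis), and your observation that the stray products such as $(x^{*})^{2}$ when $n=3m$ vanish because they are detected on the suspension $\bar P$ is the right bookkeeping.

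The genuine gap is in the case $n=2m$ with $m$ even, $m\notin\{2,4,8\}$: it is false that an even unimodular symmetric form always contains a hyperbolic plane. Definite even unimodular forms such as $E_{8}$ have no isotropic vectors at all, so no hyperbolic pair $\langle x,y\rangle$ exists; only \emph{indefinite} even unimodular forms split as $(\pm E_{8})^{a}\oplus H^{b}$ with $b\geq 1$, which is the situation where your step is valid. The hypotheses of Proposition~\ref{C1} do not exclude definite forms: attaching $e^{12}$ to $\bigvee_{i=1}^{8}S^{6}$ by a map assembled from Whitehead products $[\iota_{i},\iota_{j}]$ and Hopf-invariant-two classes so that the cup-product form is $E_{8}$ yields a $5$-connected $12$-dimensional Poincar\'e duality complex whose $11$-skeleton is a wedge of spheres, and for this $P$ no choice of $S^{6}\vee S^{6}$ wedge summand can make the cofibre $Q$ satisfy $H^{*}(Q)\cong H^{*}(S^{6}\times S^{6})$, because the form on $H^{6}(Q)$ is the restriction of a positive definite form to a rank-two summand and hence is never hyperbolic; so membership in $\mathcal{P}$ fails outright and the route through Theorem~\ref{TMain} is blocked, not merely your particular lattice argument. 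For comparison, the paper's own proof handles this case differently and more briefly: it uses $m\notin\{2,4,8\}$ (via the remark after Lemma~\ref{LPD}) only to guarantee $y^{*}\neq\pm x^{*}$, and it never verifies that $(x^{*})^{2}$ and $(y^{*})^{2}$ can be arranged to vanish, which is precisely what the ring isomorphism $H^{*}(Q)\cong H^{*}(S^{m}\times S^{m})$ demands when $n=2m$. So your write-up correctly identifies the hyperbolic pair as the crucial requirement, but the claim that evenness plus unimodularity supplies one is wrong, and in the definite case the requirement cannot be met at all; if you want a complete argument by this method you must restrict to intersection forms containing a hyperbolic summand (for example, indefinite ones).
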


We will need a preliminary lemma about the cohomology ring of Poincar\'e duality
complexes before we can prove this. 

\begin{lemma}
\label{LPD}
Let $P$ be an $n$-dimensional Poincar\'e duality complex such that $H_*(P)$ 
is torsion-free, and let $e^*$ be a generator of $H^n(P)\cong\mathbb{Z}$. 
Then for any positive integer $i\leq n$ and basis element $x^*$ in $H^i(P)$, 
there exists a choice of basis for $H^{n-i}(P)$ such that $x^*y^* = e^*$ for 
some $y^*$ in this basis.
\end{lemma}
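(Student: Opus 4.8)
The plan is to exploit Poincar\'e duality in its pairing form. Since $H_*(P)$ is torsion-free, the intersection/cup-product pairing
$$
H^i(P)\otimes H^{n-i}(P)\longrightarrow H^n(P)\cong\mathbb{Z},\qquad (u,w)\longmapsto uw,
$$
is unimodular: it induces an isomorphism $H^{n-i}(P)\xrightarrow{\ \cong\ }\mathrm{Hom}(H^i(P),\mathbb{Z})$. So I would first fix a basis $x^*=x_1^*,x_2^*,\ldots,x_r^*$ of $H^i(P)$ containing the given element $x^*$, and let $\xi_1,\ldots,\xi_r\in\mathrm{Hom}(H^i(P),\mathbb{Z})$ be the dual basis. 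Because the pairing is unimodular, there are classes $y_1^*,\ldots,y_r^*\in H^{n-i}(P)$ with $x_a^*\,y_b^* = \delta_{ab}\,e^*$; unimodularity also forces $\dim H^{n-i}(P)=r$, so $y_1^*,\ldots,y_r^*$ is itself a basis of $H^{n-i}(P)$. Then $y^*:=y_1^*$ is a basis element with $x^*y^* = x_1^*y_1^* = e^*$, which is exactly the assertion.

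The key steps, in order: (1) record that torsion-freeness of $H_*(P)$ plus Poincar\'e duality gives a unimodular pairing $H^i(P)\otimes H^{n-i}(P)\to H^n(P)\cong\mathbb{Z}$ (this is where the ``for all coefficient rings $R$'' hypothesis, or the standard statement of Poincar\'e duality for complexes with free homology, is invoked, via the universal coefficient theorem identifying $H^{n-i}(P)$ with the dual of $H_{n-i}(P)$ and duality identifying that with $H^i(P)^\vee$); (2) extend $x^*$ to a basis of $H^i(P)$; (3) use unimodularity to produce the dual basis $\{y_b^*\}$ of $H^{n-i}(P)$ with respect to this basis; (4) read off $x^*y_1^* = e^*$.

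I expect the only genuine point requiring care is step (1): making precise that Poincar\'e duality for a torsion-free Poincar\'e duality complex yields a \emph{perfect} (unimodular over $\mathbb{Z}$) pairing, rather than merely a pairing that becomes perfect after tensoring with $\mathbb{Q}$. This follows because cap product with the fundamental class $[P]\in H_n(P)$ gives an isomorphism $H^i(P)\xrightarrow{\cap[P]}H_{n-i}(P)$, and under the universal coefficient isomorphism $H^{n-i}(P)\cong\mathrm{Hom}(H_{n-i}(P),\mathbb{Z})$ (an isomorphism precisely because $H_*(P)$ is torsion-free) the cup product pairing is identified with the evaluation pairing $H^{n-i}(P)\otimes H_{n-i}(P)\to\mathbb{Z}$ precomposed with $\cap[P]$, hence is perfect. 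Everything after that is elementary linear algebra over $\mathbb{Z}$, with no further obstruction.
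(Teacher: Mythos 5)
Your proof is correct and follows essentially the same route as the paper: both arguments combine the cap-product isomorphism with the fundamental class and the universal coefficient theorem (valid because $H_*(P)$ is torsion-free) to see that the cup-product pairing $H^i(P)\otimes H^{n-i}(P)\to H^n(P)$ is perfect. The only cosmetic difference is that the paper constructs the class directly as $y=e\cap x^*$ and dualizes, whereas you extend $x^*$ to a basis and take the dual basis under the unimodular pairing; the underlying mechanism is identical.
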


\begin{proof} 
Let $x$ and $e$ be the homology duals of $x^*$ and $e^*$.
Since $H^*(P)$ satisfies Poincar\'e duality, the cap product homomorphism
$$
\seqm{e\cap H^i(P)}{}{H_{n-i}(P)}
$$
is an isomorphism, so it maps a basis of $H^i(P)$ to a basis of $H_{n-i}(P)$. 
Therefore
$$
y = e\cap x^*
$$
is an element in a basis for $H_{n-i}(P)$.

Since $H_*(P)$ is torsion-free, the cup product is dual to the cap product.
That is, there is a commutative diagram
\[\diagram
H^{n-i}(P)\rto^-{\cong}\dto^{\cup x^*} & Hom(H_{n-i}(P),Z)\dto^{(\cap x^*)^*}\\
H^n(P)\rto^-{\cong} & Hom(H_n(P),Z).
\enddiagram\]
In particular, since the homomorphism $(\cap x^*)$ sends $e$ to $y$ 
and $e$ generates $H^n(P)$, its dual $(\cap x^*)^* = (\cup x^*)$ sends $y^*$ to $e^*$, 
so we have
$$
y^*\cup x^* = e^*. 
$$ 
Since $y$ is an element in a basis for $H_{n-i}(P)$,
$y^*$ is an element in the dual basis for $H^{n-i}(P)$, and we are done.
\end{proof}

Note that if $m=n$ and $m\notin\{2,4,8\}$ then the element $y^*$ 
in Lemma~\ref{LPD} is \emph{not} equal to $\pm x^*$. But if $m\in\{2,4,8\}$ 
then we may have $y^*=\pm x^*$. This is the reason for the exclusion of 
this case in the statement of Proposition~\ref{C1}. 
\medskip 

\begin{proof}[Proof of Proposition~\ref{C1}] 
We will check that $P\in\mathcal{P}$. By Poincar\'e 
duality, $\bar P$ is $(n-m)$-dimensional. So as $m>1$, we have 
$n-m<n-1$, implying that $H_*(P)$ is torsion-free if and only if 
$H_*(\bar P)$ is torsion-free. But as $\bar P$ is homotopy equivalent 
to a wedge of spheres, $H_*(\bar P)$ is torsion-free and therefore 
$H_*(P)$ is torsion-free. 

Fix $e^*$ as a generator of $H^n(P)\cong\mathbb{Z}$. 
Let $x^*\in H^m(P)$ be a basis element. By Lemma~\ref{LPD} there 
exists a basis element $y^*\in H^{n-i}(P)$ such that $x^*y^*=e^*$. 
Since $\bar P$ is homotopy equivalent to a wedge of spheres,
$x^*$ and $y^*$ are spherical classes represented by maps 
\seqm{S^{m}}{\alpha}{\bar P} and \seqm{S^{n-m}}{\beta}{\bar P} 
and the wedge sum 
$\seqm{S^{m}\vee S^{n-m}}{\alpha+\beta}{\bar P}$ 
has a left homotopy inverse. Thus $\bar P\simeq J\vee (S^{m}\vee S^{n-m})$ 
where $J$ is the homotopy cofibre of $\alpha+\beta$. 
Let $Q$ be the homotopy cofibre of the composite 
$\seqmm{J}{}{\bar P}{i}{P}$. The homotopy equivalence for $\bar P$ 
and the fact that, as a $CW$-complex, $P=\bar P\cup e^{n}$ implies 
that $Q$ is a three-cell complex, $Q=(S^{m}\vee S^{n-m})\cup e^{n}$,  
and the map to the cofibre, $\seqm{P}{q}{Q}$, is onto in homology. 
Dualizing, $q^*$ is an injection. Suppose that $u*\in H^m(Q)$, 
$v^*\in H^{n-m}(Q)$ and $z^*\in H^{n}(Q)$ satisfy  $q^*(u^*)=x^*$, 
$q^*(v^*)=y^*$ and $q^*(z^*)=e^*$. Then the fact that $x^*y^*=e^*$ 
implies that $u^*v^*=z^*$. Thus there is a ring isomorphism 
$H^*(Q)\cong H^*(S^{m}\times S^{n-m})$. Thus $P$ satisfies all 
the conditions of Definition~\ref{Pdef}, so $P\in\mathcal{P}$. The assertions 
of the proposition are now all direct applications of Theorem~\ref{TMain}. 
\end{proof} 

As an example of Proposition~\ref{LPD} in action, we prove 
Theorem~\ref{Walldecomp}. Let $M$ be an $(n-1)$-connected 
$2n$-dimensional manifold. Observe that the $(2n-1)$-skeleton 
of $M$ is homotopy equivalent to $\bigvee_{i=1}^{k} S^{n}$, where 
$k=\mbox{dim}\,H^{n}(M)$. We aim to decompose $\Omega M$. 

\begin{proof}[Proof of Theorem~\ref{Walldecomp}] 
If $n\notin\{2,4,8\}$ and $k\geq 2$, then by 
Proposition~\ref{LPD} there is a homotopy equivalence 
\[\Omega M\simeq\Omega(S^{n}\times S^{n})\times 
     \Omega (J\vee(J\wedge\Omega(S^{n}\times S^{n})))\] 
where $J=\bigvee_{i=1}^{k-2} S^{n}$. 
\end{proof}

\section{The case of simply-connected $4$-manifolds} 
\label{sec:4manifold} 

Proposition~\ref{C1} does not cover the cases of simply-connected $4$-manifolds,
$3$-connected $8$-manifolds, or $7$-connected $16$-manifolds, 
due to the potential presence of nonzero cup product squares. To handle 
the case of simply-connected $4$-manifolds and prove 
Theorem~\ref{1conndecomp}, we use the fact that such 
spaces appear as the base space in a certain $S^1$ homotopy fibration 
whose total space is a Poincar\'e duality complex. These homotopy fibrations 
generalize the fiber bundle 
\seqmm{S^1}{}{S^5}{}{\mb CP^2}. 

Let $M$ be a simply-connected oriented $4$-manifold. If $H^2(M)=0$ 
then $M$ is homotopy equivalent to $S^{4}$, and the homotopy type 
of $\Omega S^{4}$ is well known to be $S^{3}\times\Omega S^{7}$. 
So we will assume from now on that $H^{2}(M)\neq 0$. Then, up to homotopy 
equivalence, there is a homotopy cofibration 
$$
\seqmm{S^3}{\alpha}{\bigvee_{i=1}^{k} S^2}{}{M} 
$$
for some map $\alpha$. Suppose that there is an isomorphism 
of $\mathbb{Z}$-modules 
$$
H^{*}(M)\cong\Z\paren{x_1,\ldots,x_k,z} 
$$ 
where $|x_i|=2$ and $|z|=4$.
Let $c_{ij}$ be such that $x_i x_j = c_{ij}z$.
Let $C$ be the $k\times k$ matrix
$$
C=\sbracket{c_{ij}}. 
$$ 
The anti-commutativity of the cup product implies that  
$c_{ij}=c_{ji}$, so $C$ is symmetric, and Poincar\'e duality 
implies that $C$ is nonsingular. 

Focus on the class $x_{k}\in H^{2}(M)$. By Lemma~\ref{LPD}, 
we may assume the basis of $H^{2}(M)$ has been chosen so that 
$c_{k\bar k}=1$ for some $\bar k$. That is, 
$$
x_k x_{\bar k} = z.
$$  
The cohomology class $x_{k}$ is represented by a map 
$$
q\colon \seqm{M}{}{K(\mathbb{Z},2)}. 
$$ 
Note that $K(\mathbb{Z},2)\simeq\mathbb{C}P^{\infty}$, and 
$\Omega\mathbb{C}P^{\infty}\simeq S^{1}$. Define the space $Z$ 
by the homotopy fibration sequence 
$$ 
\seqmmm{S^{1}}{}{Z}{}{M}{q}{\mathbb{C}P^{\infty}}. 
$$ 
A theorem of Quinn~\cite{Q} 
says that in a fibration of spaces having the homotopy type of 
finite $CW$-complexes, the total space is a Poincar\'e duality complex if 
and only if the fiber and base space are Poincar\'e duality complexes.
This, of course, also holds for homotopy fibrations. Therefore, as we 
have a homotopy fibration \seqmm{S^{1}}{}{Z}{}{M} 
and both $S^{1}$ and $M$ are Poincar\'e duality complexes, then so is $Z$.

\begin{lemma} 
\label{Zprops} 
The Poincar\'e duality complex $Z$ satisfies the following:
\begin{romanlist} 
\item there is a homotopy cofibration  
$$
\seqmm{S^4}{\gamma}{\bigvee_{i=1}^{k} (S^2\vee S^3)}{}{Z} 
$$ 
for some map $\gamma$; 
\item $H^*(Z)$ is torsion-free. 
\end{romanlist}
\end{lemma}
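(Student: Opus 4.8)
The plan is to exploit the homotopy fibration $\seqmm{S^1}{}{Z}{}{M}$ together with the known structure of $M$ as $\bigvee_{i=1}^k S^2$ with a single $4$-cell attached. First I would compute $H_*(Z)$ using the Serre spectral sequence for $\seqmm{S^1}{}{Z}{}{M}$. The $E^2$-page is $H_*(M)\otimes H_*(S^1)$, and the only possible differential is $d^2$, transgressing the generator $t\in H_1(S^1)$ to a class in $H_2(M)$. Since $q\colon M\to\mathbb{C}P^\infty$ represents the cohomology class $x_k$, the fibration $S^1\to Z\to M$ is the pullback of the path-loop fibration $S^1\to\ast\to\mathbb{C}P^\infty$ along $q$, so the transgression is dual to $x_k$; that is, $d^2(t) = x_k^\ast$-dual, i.e.\ the class $a_k\in H_2(M)$. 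Then $d^2(t\otimes a_j) = a_ka_j$ in the appropriate sense—more precisely, running the cohomology spectral sequence, $d_2(t^\ast) = x_k$ and multiplicativity gives $d_2(t^\ast x_j) = x_k x_j = c_{kj}z$. Since we have arranged $c_{k\bar k}=1$ by Lemma~\ref{LPD}, the class $z$ is hit, so $H^4(Z)=0$ in the obvious range and the spectral sequence collapses to leave $H_*(Z)$ torsion-free: the surviving classes are the $a_i$ for $i\neq k$ in degree $2$, the products $t\otimes a_i$ surviving in degree $3$ for $i$ in a complementary set, and one class in degree $5$. This is essentially a rank count using that $C$ is unimodular (nonsingular over $\mathbb{Z}$ with $c_{k\bar k}=1$ allowing a change of basis so that the relevant row is a coordinate vector), which gives part~(ii), and simultaneously tells us $Z$ has homology $\mathbb{Z}$ in degrees $0,5$ and free of rank $k-1$ in each of degrees $2$ and $3$.

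For part~(i), I would argue that $Z$ is $1$-connected (from the homotopy exact sequence, $\pi_1(Z)$ surjects onto $\pi_1(M)=0$ with kernel a quotient of $\pi_1(S^1)=\mathbb{Z}$, but the transgression being onto forces $\pi_1(Z)=0$; more carefully, $\pi_2(M)\to\pi_1(S^1)$ is surjective since $d^2(t)\ne 0$, hence $\pi_1(Z)=0$). With $H_*(Z)$ as computed, $Z$ is a simply-connected $5$-dimensional Poincaré duality complex whose $4$-skeleton $\bar Z$ has homology concentrated in degrees $0,2,3$, free of rank $k-1$ in degrees $2$ and $3$. Since $\bar Z$ is simply-connected with torsion-free homology only in degrees $2$ and $3$, it is homotopy equivalent to a wedge of spheres $\bigvee_{i=1}^{k-1}(S^2\vee S^3)$: one builds the equivalence by choosing maps $S^2\to\bar Z$ and $S^3\to\bar Z$ representing homology generators (the $S^2$'s exist since $\pi_2=H_2$, and the $S^3$'s since once the $2$-skeleton is a wedge of $S^2$'s the $3$-cells are attached along elements of $\pi_2$ of a wedge of $2$-spheres, but those attaching maps must be null since $H_3$ is free of the right rank — here one needs a small Hilton–Milnor argument or a minimal CW-model argument) and checking the resulting map induces a homology isomorphism. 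Then $Z = \bar Z\cup e^5$ gives the homotopy cofibration $\seqmm{S^4}{\gamma}{\bigvee_{i=1}^k(S^2\vee S^3)}{}{Z}$ — note the index runs to $k$, not $k-1$, which signals that I should recount: the $4$-skeleton is $\bigvee_{i=1}^{k}(S^2\vee S^3)$ only if the homology ranks in degrees $2,3$ are $k$, so I must recheck whether the transgression kills one $a_i$ or whether instead the picture is that $H_2(Z)$ and $H_3(Z)$ each have rank $k-1$ but the statement's indexing is off, or (more likely) that the correct bookkeeping yields rank $k$ after all because $t\otimes 1$ in degree $1$ dies but contributes a degree-$3$ class via a different mechanism; resolving this indexing is where I must be careful.

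The main obstacle I anticipate is precisely this bookkeeping in the Serre spectral sequence: tracking exactly which classes survive, in which degrees, and with what ranks, so that the cofibration in part~(i) comes out with the stated wedge $\bigvee_{i=1}^{k}(S^2\vee S^3)$ rather than something one dimension off, and verifying that the surviving homology is genuinely torsion-free (which requires the unimodularity of $C$, itself a consequence of Poincaré duality on $M$, to ensure $d_2$ is \emph{surjective} onto $H^4(M)$ and not merely nonzero). A secondary technical point is upgrading the homological computation to an actual homotopy-equivalence $\bar Z\simeq\bigvee_{i=1}^{k}(S^2\vee S^3)$; since $\bar Z$ is simply-connected with free homology in only two adjacent degrees this is standard (build the wedge, map it in by representing generators, apply Whitehead), but the $S^3$-summands require knowing the attaching maps of the $3$-cells of $\bar Z$ are trivial, which follows from $\pi_2(\bigvee S^2)$ being free abelian (Hilton) together with the rank count on $H_3$. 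Once both parts are in place, $Z$ lands in the class $\mathcal P$ of Definition~\ref{Pdef} and the loop space decomposition of Theorem~\ref{TMain} applies, feeding into the eventual proof of Theorem~\ref{1conndecomp}.
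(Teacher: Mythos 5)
Your proposal follows essentially the same route as the paper: run the Serre spectral sequence for $S^1\to Z\to M$, use $d_2(a)=x_k$ and $c_{k\bar k}=1$ from Lemma~\ref{LPD} to see that $z$ is hit and that the surviving terms are free, deduce torsion-freeness, and then read off a minimal CW structure to get the cofibration attaching the top cell; your extra care about $\pi_1(Z)=0$ and about the nullity of the $3$-cell attaching maps only fills in steps the paper leaves terse (the one small slip is that surjectivity of $\pi_2(M)\to\pi_1(S^1)$ needs $x_k$ to be a basis element, i.e.\ evaluation of $x_k$ on $H_2(M)$ is onto, not merely $d^2(t)\neq 0$). On the indexing worry you flag: your count is the correct one. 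The ranks of $H_2(Z)$ and $H_3(Z)$ are each $k-1$, exactly as the paper's own proof states (``$Z$ has $k-1$ cells in dimension $2$ and $k-1$ cells in dimension $3$''), and as its later use confirms, since Proposition~\ref{loopZ} takes the skeletal inclusion from $\bigvee_{i=1}^{k-1}(S^2\vee S^3)$; the ``$\bigvee_{i=1}^{k}$'' in the statement of the lemma is an indexing slip. Your hedged alternative --- that some other mechanism restores rank $k$ --- does not exist: the transgression kills exactly one degree-$2$ class and cuts the degree-$3$ column from rank $k$ to $k-1$, as one can check on $M=\mathbb{C}P^2$ ($k=1$, $Z\simeq S^5$) or $M=S^2\times S^2$ with $x_k$ a factor generator ($k=2$, $Z\simeq S^2\times S^3$). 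So you should simply assert the cofibration with $k-1$ wedge summands and not attempt to match the printed index.
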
 

\begin{proof} 
Consider the homotopy fibration 
$\seqmm{S^{1}}{}{Z}{}{M}$. 
We will use a Serre spectral sequence to calculate $H^*(Z)$. We have 
$E_{2}^{\ast,\ast}\cong H^*(S^1)\otimes H^*(M)$. 
Let $a\in H^{1}(S^{1})$ represent a generator and recall that, as a 
$\mathbb{Z}$-module, $H^*(M)\cong\mathbb{Z}\{x_{1},\ldots,x_{k},z\}$. 
Thus a $\mathbb{Z}$-module basis for $E_{2}^{\ast,\ast}$ is given by 
$$ 
\{1,1\otimes x_{1},\ldots, 1\otimes x_{k},1\otimes z, a\otimes 1, 
      a\otimes x_{1},\ldots,a\otimes x_{k},a\otimes z\}. 
$$ 
The fibration in question is induced by the map 
$\seqm{M}{q}{\mathbb{C}P^{\infty}}$ 
which represents the cohomology class~$x_{k}$. Therefore 
$d_{2}(a)=\pm x_{k}$. Changing the basis of $H^1(S^1)$ if need be, 
assume that $d_{2}(a)=x_{k}$. As $d_{2}$ is a differential, the fact 
that $x_{k}x_{\bar k}=z$ implies that 
$d_{2}(a\otimes x_{\bar k})=x_{k}x_{\bar{k}}=z$, while 
$d_{2}(a\otimes x_{j})=x_{k}x_{j}=c_{kj}z$. Thus a $\mathbb{Z}$-module 
basis for $E_{3}^{\ast,\ast}$ is given by 
$$ 
\{1,1\otimes x_{1},\ldots, 1\otimes x_{k-1}, (a\otimes x_{1}-a\otimes c_{k1}x_{\bar k}), 
   \ldots, (a\otimes x_{\bar k-1}-a\otimes c_{k(\bar k-1)}x_{\bar k}), 
$$ 
$$ 
\hspace{2cm} 
   (a\otimes x_{\bar k+1}-a\otimes c_{k(\bar k+1)}x_{\bar k}), 
   (a\otimes x_{k}-a\otimes c_{kk}x_{\bar k}), a\otimes z\}. 
$$ 
All other differentials are trivial for degree reasons, so we have 
$H^*(Z)\cong E_{\infty}^{\ast,\ast}\cong E_{3}^{\ast,\ast}$. 

Notice that the calculation for the rational cohomology Serre spectral 
is exactly the same. Thus the rationalization map 
$\seqm{H^*(Z;\mathbb{Z})}{}{H^*(Z;\mathbb{Q})}$ 
preserves the number of basis elements in each dimension. Thus 
$H^*(Z)$ is torsion-free, proving part~(ii). 

Notice that the description of $H^*(Z)$ implies that $Z$ has $k-1$ 
cells in dimension $2$ and $k-1$ cells in dimension $3$. The fact 
that $H^*(Z)$ is torsion-free therefore implies that the $3$-skeleton 
of $Z$ is homotopy equivalent to $\bigvee_{i=1}^{k} (S^{2}\vee S^{3})$. 
The one remaining nontrivial cell of $Z$ occurs in dimension~$5$, 
so $Z$ is the homotopy cofibre of a map 
$\seqm{S^{4}}{}{\bigvee_{i=1}^{k}(S^{2}\vee S^{3})}$, 
proving part~(i). 
\end{proof} 

\begin{remark} 
The space $Z$ is in fact a manifold, not just a Poincar\'{e} duality complex, which is diffeomorphic 
to the connected sum of $k$ copies of $S^{2}\times S^{3}$~\cite{DL}. 
As we only use the much simpler properties of $Z$ listed in Lemma~\ref{Zprops}, 
it is clarifying to leave the analysis of $Z$ as it stands in the statement and proof of the lemma. 
\end{remark} 

Before proceeding to decompose the loop space of a simply-connected $4$-manifold, 
we first decompose the loop space of the associated Poincar\'{e} duality space $Z$. Let 
$$ 
i\,\colon\seqm{\bigvee_{i=1}^{k-1}(S^{2}\vee S^{3})}{}{Z} 
$$ 
be the skeletal inclusion. 

\begin{proposition} 
\label{loopZ} 
If $k=1$ then $Z\simeq S^{5}$, so $\Omega Z\simeq\Omega S^{5}$. 
If $k\geq 2$ then the following hold: 
\begin{romanlist} 
\item there is a homotopy equivalence 
$$ 
\Omega Z\simeq\Omega(S^{2}\times S^{3})\times 
   \Omega(J\vee (J\wedge \Omega(S^{2}\times S^{3})))  
$$
where $J=\bigvee_{i=1}^{k-1} (S^{2}\vee S^{3})$ if $k>2$ and $J=\ast$ if $k=2$; 
\item the map $\seqm{\Omega(\bigvee_{i=1}^{k-1} (S^{2}\vee S^{3}))}{\Omega i} 
     {\Omega Z}$ 
has a right homotopy inverse. 
\end{romanlist} 
\end{proposition}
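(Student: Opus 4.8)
The plan is to verify that $Z$ belongs to the class $\mathcal{P}$ of Definition~\ref{Pdef}, and then read off the conclusions directly from Theorem~\ref{TMain}. By Lemma~\ref{Zprops}, $Z$ is a simply-connected, $5$-dimensional Poincar\'e duality complex with torsion-free integral homology whose $3$-skeleton $\bar Z$ is homotopy equivalent to $\bigvee_{i=1}^{k}(S^{2}\vee S^{3})$, and $Z$ is obtained from $\bar Z$ by attaching a single $5$-cell. In the notation of Definition~\ref{Pdef} we have $m=2$, $n=5$, so $1<m=2\leq n-m=3$, and the homology of $Z$ has the required form $\Z\{a_{1},\dots,a_{\ell},z\}$ with $2=|a_{1}|\leq\cdots\leq|a_{\ell}|=3<|z|=5$. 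The hypotheses $n-m\neq m$ here means we are in the strictly-graded case, so none of the $\{2,4,8\}$ difficulties from Proposition~\ref{C1} arise; this is exactly why we passed to $Z$ rather than working directly with $M$.

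It remains to produce the splitting $\bar Z\simeq J\vee(S^{2}\vee S^{3})$ and the map $q$ to a space $Q$ with $H^{\ast}(Q)\cong H^{\ast}(S^{2}\times S^{3})$. For this I would mimic the argument in the proof of Proposition~\ref{C1}. Fix a generator $e^{\ast}$ of $H^{5}(Z)\cong\Z$ and choose a basis element $x^{\ast}\in H^{2}(Z)$; by Lemma~\ref{LPD} there is a basis element $y^{\ast}\in H^{3}(Z)$ with $x^{\ast}y^{\ast}=e^{\ast}$. Since $\bar Z$ is a wedge of spheres, $x^{\ast}$ and $y^{\ast}$ are spherical, so they are represented by maps $S^{2}\to\bar Z$ and $S^{3}\to\bar Z$ whose wedge sum $S^{2}\vee S^{3}\to\bar Z$ has a left homotopy inverse; hence $\bar Z\simeq J\vee(S^{2}\vee S^{3})$ where $J$ is the homotopy cofibre. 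Taking $Q$ to be the cofibre of the composite $J\hookrightarrow\bar Z\xrightarrow{i}Z$, the fact that $Z=\bar Z\cup e^{5}$ forces $Q=(S^{2}\vee S^{3})\cup e^{5}$ with the quotient map $Z\to Q$ surjective in homology; dualizing and using $x^{\ast}y^{\ast}=e^{\ast}$ gives the ring isomorphism $H^{\ast}(Q)\cong H^{\ast}(S^{2}\times S^{3})$. Thus $Z\in\mathcal{P}$, and Theorem~\ref{TMain}(i) yields the decomposition $\Omega Z\simeq\Omega(S^{2}\times S^{3})\times\Omega(J\vee(J\wedge\Omega(S^{2}\times S^{3})))$ (the $J$ here being a wedge of spheres, hence a suspension, so the refined form of Theorem~\ref{TMain}(i) applies), while Theorem~\ref{TMain}(ii) gives the right homotopy inverse for $\Omega i$. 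One should also track that the $J$ produced has the claimed form: $\bar Z$ has $k$ copies each of $S^{2}$ and $S^{3}$, we quotient one $S^{2}\vee S^{3}$, leaving $J\simeq\bigvee_{i=1}^{k-1}(S^{2}\vee S^{3})$, which is $\ast$ precisely when $k=1$; and when $k=1$ the space $Z$ is already just $S^{5}$, giving the degenerate statement.

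The main obstacle is essentially bookkeeping rather than a genuine difficulty: one must be careful that the dual-basis element $y^{\ast}$ supplied by Lemma~\ref{LPD} can indeed be chosen in degree $n-m=3$ (not some other degree) and that $x^{\ast},y^{\ast}$ are genuinely spherical so that the wedge map $S^{2}\vee S^{3}\to\bar Z$ admits a left homotopy inverse — this is where torsion-freeness of $H_{\ast}(Z)$ from Lemma~\ref{Zprops}(ii) and the wedge-of-spheres structure of $\bar Z$ from Lemma~\ref{Zprops}(i) are both used. Once the splitting of $\bar Z$ and the map $q$ are in place, everything else is a quotation of Theorem~\ref{TMain}, exactly as in the proof of Proposition~\ref{C1}.
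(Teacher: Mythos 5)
Your overall strategy---verify that $Z\in\mathcal{P}$ and quote Theorem~\ref{TMain}---is exactly the paper's, except that the paper does not re-run the verification: having noted (via Quinn's theorem) that $Z$ is a $1$-connected, $5$-dimensional Poincar\'e duality complex whose $4$-skeleton is a wedge of spheres, it simply cites Proposition~\ref{C1}, whose proof is precisely the argument you reproduce with Lemma~\ref{LPD}. So structurally you are fine; the genuine problem is your bookkeeping of the skeleton of $Z$, which is off by one and ends up contradicting the statement you are proving.

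The spectral sequence computation in the proof of Lemma~\ref{Zprops} gives $d_2(a\otimes 1)=x_k$, so $x_k$ dies in $H^2$, and the kernel of $d_2$ on the classes $a\otimes x_j$ has rank $k-1$; hence $H^2(Z)$ and $H^3(Z)$ each have rank $k-1$ (the ``$k-1$ cells in dimension $2$ and $k-1$ cells in dimension $3$'' of that proof). Thus the $4$-skeleton is $\bar Z\simeq\bigvee_{i=1}^{k-1}(S^2\vee S^3)$; the index $k$ displayed in the statement of Lemma~\ref{Zprops}(i) is a typo, as you can see from the $k=1$ case (if the skeleton were $S^2\vee S^3$ then $H^2(Z)\neq 0$, contradicting $Z\simeq S^5$) and from part (ii) of the proposition, whose skeletal inclusion has $k-1$ copies. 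Consequently the $J$ your construction produces (collapse one copy of $S^2\vee S^3$ from $\bar Z$) is $\bigvee_{i=1}^{k-2}(S^2\vee S^3)$, which is a point exactly when $k=2$---consistent with $Z\simeq S^2\times S^3$ and $\Omega Z\simeq\Omega(S^2\times S^3)$ in that case---and not $\bigvee_{i=1}^{k-1}$ with $J=\ast$ at $k=1$, as in your final bookkeeping paragraph. As written, your count is inconsistent with the $k=2$ clause of the statement, with part (ii), and with the $k=1$ degenerate case. (The ``$k>2$'' clause of the displayed statement shares this off-by-one; what your argument actually establishes, once the count is corrected, is the decomposition with $J=\bigvee_{i=1}^{k-2}(S^2\vee S^3)$.) Fix the count of wedge summands; the rest of your write-up is the paper's argument with Proposition~\ref{C1} inlined rather than cited.
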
 

\begin{proof} 
Notice that Proposition~\ref{Zprops}~(i) implies that if $k=1$ then 
$Z\simeq S^{5}$. Assume from now on that $k\geq 2$. We will show that 
the conditions of Proposition~\ref{C1} hold. The result of Quinn already 
cited implies that $Z$ is a Poincar\'{e} duality space, and by 
Proposition~\ref{Zprops}~(i), $Z$ is $1$-connected and 
$5$-dimensional. So with $m=2$ and $n=5$ we have $m=2<n-m=3$. 
By Proposition~\ref{Zprops}, the $4$-skeleton of $Z$ is homotopy equivalent 
to $\bigvee_{i=1}^{k-1}(S^{2}\vee S^{3})$. Thus $Z$ satisfies the hypotheses 
of Proposition~\ref{C1}, and applying the proposition immediately gives 
the statements of the proposition. 
\end{proof} 

We now prove Theorem~\ref{1conndecomp}, restated as follows. 

\begin{theorem}
\label{T1} 
Let $M$ be a simply-connected $4$-manifold and suppose 
$\mbox{dim}\, H^{2}(M)=k$ for $k>0$. If $k=1$ then there 
is a homotopy equivalence 
$$
\Omega M\simeq S^1\times\Omega S^5   
$$
and if $k\geq 2$ then there is a homotopy equivalence 
$$
\Omega M\simeq S^{1}\times\Omega Z\simeq S^1\times\Omega(S^{2}\times S^{3})\times 
   \Omega(J\vee (J\wedge \Omega(S^{2}\times S^{3})))  
$$
where $J=\bigvee_{i=1}^{k-1} (S^{2}\vee S^{3})$ if $k>2$ and $J=\ast$ if $k=2$. 
Consequently, the homotopy type of $\Omega M$ depends only on the 
integer $k=\dim\, H^{2}(M)$. 
\end{theorem}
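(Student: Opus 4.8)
The plan is to use the homotopy fibration sequence $\seqmmm{S^{1}}{}{Z}{p}{M}{q}{\mathbb{C}P^{\infty}}$ defining $Z$ together with the decomposition of $\Omega Z$ already obtained in Proposition~\ref{loopZ}. The essential step is to prove $\Omega M\simeq S^{1}\times\Omega Z$; after that, the two displayed equivalences follow immediately by substituting Proposition~\ref{loopZ} (noting that $Z\simeq S^{5}$ when $k=1$, so $\Omega Z\simeq\Omega S^{5}$), and the final assertion follows because the resulting product depends only on $k=\dim H^{2}(M)$.

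To split off the $S^{1}$, first loop the fibration $\seqmm{Z}{p}{M}{q}{\mathbb{C}P^{\infty}}$; since $\Omega\mathbb{C}P^{\infty}\simeq S^{1}$, this gives a homotopy fibration $\seqmm{\Omega Z}{\Omega p}{\Omega M}{\Omega q}{S^{1}}$. I would then construct a section of $\Omega q$. As $M$ is simply-connected, the Hurewicz theorem identifies $\pi_{2}(M)\cong H_{2}(M)$, and $q$ represents the basis element $x_{k}$ of $H^{2}(M)\cong\mathrm{Hom}(H_{2}(M),\mathbb{Z})$; since a basis element is primitive, the induced map $(\Omega q)_{\ast}\colon\pi_{1}(\Omega M)=\pi_{2}(M)\to\pi_{1}(S^{1})\cong\mathbb{Z}$ is (split) surjective. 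Choosing $\xi\in\pi_{1}(\Omega M)$ with $(\Omega q)_{\ast}(\xi)$ a generator and representing it by a map $\seqm{S^{1}}{s}{\Omega M}$ (possible because $\Omega M$ is a connected $H$-space, so $[S^{1},\Omega M]=\pi_{1}(\Omega M)$), the composite $\Omega q\circ s$ has degree one, hence, after adjusting the sign of $\xi$ if needed, $\Omega q\circ s\simeq\ID_{S^{1}}$.

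Next, define $\seqm{S^{1}\times\Omega Z}{\Phi}{\Omega M}$ by $\Phi=\mu\circ(s\times\Omega p)$, where $\mu$ is the loop multiplication. Because $\Omega M$ is an $H$-space, $\Phi$ induces $s_{\ast}+(\Omega p)_{\ast}$ on homotopy groups. Comparing with the long exact homotopy sequence of $\seqmm{\Omega Z}{\Omega p}{\Omega M}{\Omega q}{S^{1}}$ — using that $\pi_{j}(S^{1})=0$ for $j\geq 2$, so $(\Omega p)_{\ast}$ is an isomorphism in degrees $\geq 2$, and that $Z$ is simply-connected (because $(\Omega q)_{\ast}$ is onto in degree $1$), so the sequence degenerates to a split short exact sequence in degree $1$ — the five lemma shows $\Phi_{\ast}$ is an isomorphism in every degree. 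Since $M$ and $Z$ are finite complexes, both sides have the homotopy type of $CW$-complexes, so $\Phi$ is a homotopy equivalence and $\Omega M\simeq S^{1}\times\Omega Z$, as required.

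The one genuine difficulty is this splitting step, and what makes it work is that $\Omega M$ is an $H$-space and $\Omega q$ an $H$-map: a section of an arbitrary fibration does not split it as a product, but here $s$ and $\Omega p$ are maps into a group-like space and can be multiplied, and the product can then be checked to be an equivalence. Equivalently, $\Phi$ covers $\ID_{S^{1}}$ and restricts to the identity on the fibre $\Omega Z$, so it is a fibre homotopy equivalence. This is precisely the phenomenon behind $\Omega\mathbb{C}P^{2}\simeq S^{1}\times\Omega S^{5}$, which comes from looping the bundle $\seqmm{S^{1}}{}{S^{5}}{}{\mathbb{C}P^{2}}$, and the homology input needed to make $Z$ fit Proposition~\ref{C1} has already been assembled in Lemma~\ref{Zprops}.
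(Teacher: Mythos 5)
Your proposal is correct and follows essentially the same route as the paper: both produce a right homotopy inverse of $\Omega q$ from a class in $\pi_{2}(M)\cong H_{2}(M)$ pairing to $1$ with $x_{k}$ (the paper phrases this via a map $\seqm{S^{2}}{t}{M}$ whose adjoint composed with $\Omega q$ has degree one), and then split the looped fibration $\seqmm{\Omega Z}{\Omega p}{\Omega M}{\Omega q}{S^{1}}$ to get $\Omega M\simeq S^{1}\times\Omega Z$ before invoking Proposition~\ref{loopZ}. Your explicit verification that $\mu\circ(s\times\Omega p)$ is a weak equivalence (via the five lemma and simple-connectivity of $Z$) merely spells out the standard splitting step the paper leaves implicit, so no substantive difference or gap.
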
  

\begin{proof} 
Consider the map 
$\seqm{M}{q}{\mathbb{C}P^{\infty}}$ 
representing the cohomology class $x_{k}$. Since $M$ is simply-connected, 
any generator of $H_{2}(M)$ is in the image of the Hurewicz homomorphism. 
In our case, the homology class dual to $x_{k}$ is the Hurewicz image of a map 
$t\,\colon\seqm{S^{2}}{}{M}$. 
Dualizing, $t^{\ast}(x_{k})=\iota^*_{2}$, where $\iota^*_{2}$ 
is a generator of $H^2(S^2)$. Therefore, the composite 
$\seqmm{S^{2}}{t}{M}{q}{\mathbb{C}P^{\infty}}$ 
is degree one in cohomology. Let 
$\bar t\,\colon\seqm{S^{1}}{}{\Omega M}$ 
be the adjoint of $t$. Then the composite 
$\seqmm{S^{1}}{\bar t}{\Omega M}{\Omega q}{S^{1}}$ 
is degree one in cohomology, implying that it is a homotopy equivalence. 
Therefore, in the homotopy fibration 
$\seqmm{\Omega Z}{}{\Omega M}{\Omega q}{S^{1}}$, 
the map $\Omega q$ has a right homotopy inverse, implying that there 
is a homotopy equivalence 
$$ 
\Omega M\simeq S^{1}\times\Omega Z. 
$$ 
The theorem now follows from the decomposition of $\Omega Z$ 
in Proposition~\ref{loopZ}. 
\end{proof}

An analogue of Theorem~\ref{T1} holds for $3$-connected $8$-manifolds $M$,
provided that there is a map 
\seqm{M}{}{\mb HP^{2}} 
that induces a surjection onto $H^4(\mb HP^{2})\cong \Z$. 
In such a case, composing this map with the inclusion 
\seqm{\mb HP^{2}}{}{\mb HP^{\infty}} 
and then using the fact that $\mb HP^{\infty}\simeq S^3$, one obtains a 
principal homotopy fibration 
\seqmm{S^3}{}{Z}{}{M} with total space $Z$ an $11$-dimensional Poincar\'e duality complex. 
The only nonzero homology groups of $Z$ are in degrees $4$, $7$, and $11$, 
and using the associated action of $S^3$ on $Z$, it is not difficult to show that 
the $10$-skeleton of $Z$ is homotopy equivalent to a wedge of $4$-spheres 
and $11$-spheres. It is not really clear what may happen in the case of 
$7$-connected $16$-manifolds, as $S^{7}$ does not have a classifying space. 

We now re-organize the information appearing in the decomposition 
in Theorem~\ref{T1} when $k\geq 2$ to make it more clear how the 
decomposition depends on the $2$-skeleton of the $4$-manifold. Let 
$i\colon\seqm{\bigvee_{i=1}^{k} S^{2}}{}{M}$ 
be the skeletal inclusion. 
 
\begin{theorem} 
   \label{T2} 
   Let $M$ be a simply-connected $4$-manifold and suppose 
   $\mbox{dim}\, H^{2}(M)=k$ for $k\geq 2$. Then the map 
   $\seqm{\Omega(\bigvee_{i=1}^{k} S^{2})}{\Omega i}{\Omega M}$   
   has a right homotopy inverse. 
\end{theorem}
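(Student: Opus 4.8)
The plan is to promote the decomposition $\Omega M\simeq S^{1}\times\Omega Z$ of Theorem~\ref{T1} to a statement about $\Omega i$, using the splitting of $\Omega Z$ off its $4$-skeleton provided by Proposition~\ref{loopZ}(ii) together with the high connectivity of the pair $(M,W)$, where $W=\bigvee_{i=1}^{k}S^{2}$ is the $2$-skeleton of $M$ and $i\colon W\to M$ the skeletal inclusion. Since $M$ is a simply-connected $4$-manifold it is obtained from $W$ by attaching a single $4$-cell, so $(M,W)$ is $3$-connected and $\mathrm{hofib}(i)$ is $2$-connected. I would first set up the reduction. By Theorem~\ref{T1}, $\Omega M\simeq S^{1}\times\Omega Z$; concretely, since in the defining homotopy fibration $S^{1}\to Z\xrightarrow{\pi}M\xrightarrow{q}\mathbb{C}P^{\infty}$ the adjoint $\bar t\colon S^{1}\to\Omega M$ of the map $t\colon S^{2}\to M$ (the one whose Hurewicz image is the homology class dual to $x_{k}$) is a right homotopy inverse of the loop map $\Omega q$, and $\Omega\pi$ is the inclusion of the homotopy fibre of $\Omega q$, the map $\Psi\colon S^{1}\times\Omega Z\to\Omega M$, $(a,w)\mapsto\bar t(a)\cdot\Omega\pi(w)$ (with $\cdot$ the loop multiplication), is a homotopy equivalence. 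Hence it suffices to produce maps $s_{1}\colon S^{1}\to\Omega W$ and $\sigma\colon\Omega Z\to\Omega W$ with $\Omega i\circ s_{1}\simeq\bar t$ and $\Omega i\circ\sigma\simeq\Omega\pi$: as $\Omega i$ is a loop map, $\Phi(a,w):=s_{1}(a)\cdot\sigma(w)$ then satisfies $\Omega i\circ\Phi\simeq\Psi$, so $\Phi\circ\Psi^{-1}$ is a right homotopy inverse of $\Omega i$.

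The $S^{1}$-factor is immediate. Because $(M,W)$ is $3$-connected, $\pi_{2}(W)\to\pi_{2}(M)$ is an isomorphism, so $t$ lifts through $i$ to a map $\tilde t\colon S^{2}\to W$ with $i\circ\tilde t\simeq t$; taking $s_{1}$ to be the adjoint of $\tilde t$, naturality of the loop--suspension adjunction in the target gives $\Omega i\circ s_{1}\simeq\bar t$. The substantive step is constructing $\sigma$. Here I would invoke Proposition~\ref{loopZ}(ii): letting $Y$ be the $4$-skeleton of $Z$ (a wedge of $2$- and $3$-spheres) and $i_{Z}\colon Y\hookrightarrow Z$ the skeletal inclusion, the map $\Omega i_{Z}$ has a right homotopy inverse $r\colon\Omega Z\to\Omega Y$. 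Since $\dim Y\le 3$ and $(M,W)$ is $3$-connected, obstruction theory shows that $\pi\circ i_{Z}\colon Y\to M$ lifts through $i$ to a map $\ell\colon Y\to W$ with $i\circ\ell\simeq\pi\circ i_{Z}$ (all obstructions lie in $H^{j+1}(Y;\pi_{j}(\mathrm{hofib}(i)))$, which vanishes because $\pi_{j}(\mathrm{hofib}(i))=0$ for $j\le 2$ and $H^{j+1}(Y)=0$ for $j+1\ge 4$). Setting $\sigma=\Omega\ell\circ r$ then gives $\Omega i\circ\sigma=\Omega(i\circ\ell)\circ r\simeq\Omega(\pi\circ i_{Z})\circ r=\Omega\pi\circ\Omega i_{Z}\circ r\simeq\Omega\pi$, as required, and assembling $\Phi=s_{1}\cdot\sigma$ completes the argument.

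The main obstacle to anticipate is precisely why one must route through the finite-dimensional skeleton $Y$ rather than lifting $\pi\colon Z\to M$ (or $\Omega\pi\colon\Omega Z\to\Omega M$) through $i$ (resp.\ $\Omega i$) directly: $Z$ is a $5$-dimensional complex, and the obstruction to lifting $\pi$ through $i$ lives in $H^{5}(Z;\pi_{4}(\mathrm{hofib}(i)))$, which has no reason to vanish, while $\Omega Z$ is infinite-dimensional so a naive obstruction count over $\Omega Z$ is hopeless. Cutting down to the $3$-dimensional $Y$ kills every obstruction, and Proposition~\ref{loopZ}(ii) is exactly the tool that carries the lift over $Y$ back up to all of $\Omega Z$. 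Once this is in hand the remaining verifications — naturality of the adjunction, and that $\Omega i$ carries the loop multiplication to the loop multiplication — are routine, so the theorem follows.
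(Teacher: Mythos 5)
Your proposal is correct and follows essentially the same route as the paper: it combines the equivalence $\Omega M\simeq S^{1}\times\Omega Z$ from Theorem~\ref{T1} with the right homotopy inverse of the looped skeletal inclusion of $Z$ from Proposition~\ref{loopZ}(ii), factors the resulting low-dimensional maps through the $2$-skeleton $\bigvee_{i=1}^{k}S^{2}$, and assembles them via the loop multiplication using that $\Omega i$ is a loop map. The only cosmetic difference is that you justify the factorization of $Y\to M$ through $i$ by obstruction theory, whereas the paper uses cellular approximation to the $3$-skeleton of $M$; both are routine and equivalent here.
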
 

\begin{proof} 
Recall that there is a homotopy fibration 
$\seqmm{Z}{r}{M}{q}{\mathbb{C}P^{\infty}}$. 
In Theorem~\ref{T1} it was shown that~$\Omega q$ has a right homotopy 
inverse, 
$f\colon\,\seqm{S^{1}}{}{\Omega M}$. Thus the composite 
\[\seqmm{S^{1}\times\Omega Z}{f\times\Omega r}{\Omega M\times\Omega M} 
     {\mu}{\Omega M}\] 
is a homotopy equivalence, where $\mu$ is the loop multiplication. 

By Proposition~\ref{loopZ}, the map 
$\seqm{\Omega(\bigvee_{s=1}^{k-1}(S^{2}\vee S^{3}))}{\Omega j}{\Omega Z}$ 
has a right homotopy inverse, where~$j$ is the inclusion of the $4$-skeleton 
into the $5$-dimensional space $Z$. Let 
$g\colon\,\seqm{\Omega Z}{}{\Omega(\bigvee_{s=1}^{k-1}(S^{2}\vee S^{3}))}$ 
be a right homotopy inverse of $\Omega j$. Let $h$ be the composite 
\[h\,\colon\seqmm{\bigvee_{s=1}^{k-1}(S^{2}\vee S^{3})}{j}{Z}{r}{M}.\] 
Then $\Omega h\circ g$ is homotopic to $\Omega r$. Therefore, 
by the previous paragraph, the composite 
\[\seqmmm{S^{1}\times\Omega Z}{f\times g} 
     {\Omega M\times\Omega(\bigvee_{s=1}^{k-1}(S^{2}\vee S^{3}))} 
     {\ID\times\Omega h}{\Omega M\times\Omega M}{\mu}{\Omega M}\] 
is a homotopy equivalence.  

Since $\bigvee_{s=1}^{k-1}(S^{2}\vee S^{3})$ is $3$-dimensional, 
the map $h$ factors through the $3$-skeleton of $M$, which is homotopy 
equivalent to $\bigvee_{i=1}^{k} S^{2}$. Thus $h$ factors as a composite 
$\seqmm{\bigvee_{s=1}^{k-1}(S^{2}\vee S^{3})}{h^{\prime}}{\bigvee_{i=1}^{k} S^{2}}{i}{M}$  
for some map $h^{\prime}$. Also, for connectivity and dimension reasons, the map 
$\seqm{S^{1}}{f}{M}$ 
factors as a composite 
$\seqmm{S^{1}}{f^{\prime}}{\Omega(\bigvee_{i=1}^{k} S^{2})}{\Omega i}{\Omega M}$ 
for some map $f^{\prime}$. Therefore, inserting these factorizations into the 
homotopy equivalence $\mu\circ(\ID\times\Omega h)\circ(f\times g)$, we 
obtain a homotopy equivalence 
\[\seqmmm{S^{1}\times\Omega Z}{f^{\prime}\times g} 
     {\Omega(\bigvee_{s=1}^{k-1} S^{2})\times\Omega(\bigvee_{s=1}^{k-1}(S^{2}\vee S^{3}))} 
     {\ID\times\Omega h^{\prime}} 
     {\Omega(\bigvee_{i=1}^{k} S^{2})\times\Omega(\bigvee_{i=1}^{k} S^{2})} 
     {\Omega i\times\Omega i}{\Omega M\times\Omega M}\stackrel{\mu}{\longrightarrow} 
     \Omega M.\] 
Finally, since $\Omega i$ is a loop map, it commutes with the loop multiplication, 
so we obtain a homotopy equivalence 
\[\llnamedddright{S^{1}\times\Omega Z}{f^{\prime}\times(\Omega h^{\prime}\circ g)} 
        {\Omega(\bigvee_{i=1}^{k} S^{2})\times\Omega(\bigvee_{i=1}^{k} S^{2})}{\mu} 
        {\Omega(\bigvee_{i=1}^{k} S^{2})}{\Omega i}{\Omega M}.\] 
Consequently, the map $\Omega i$ has a right homotopy inverse. 
\end{proof} 

Theorem~\ref{T2} is useful. For example, we apply it to determine 
the homotopy type of the loops on certain principle $G$-bundles. 

\begin{corollary} 
   \label{bundlecor} 
   Let $G$ be a simply-connected, simple compact Lie group. Let $M$ 
   be a simply-connected $4$-manifold with $\mbox{dim}\, H^{2}(M)\geq 2$. 
   Let $\seqm{P}{\pi}{M}$ be a principle $G$-bundle. Then $\Omega\pi$ has 
   a right homotopy inverse, implying that there is a homotopy 
   equivalence 
   \[\Omega P\simeq\Omega M\times\Omega G.\] 
\end{corollary}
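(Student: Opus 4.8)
The plan is to deduce Corollary~\ref{bundlecor} from Theorem~\ref{T2} by exploiting the standard fibration structure of a principal bundle together with the fact that $\Omega G$ acquires a section after looping. First I would recall that a principal $G$-bundle $\seqm{P}{\pi}{M}$ sits in a homotopy fibration $\seqmm{G}{}{P}{\pi}{M}$, and upon looping this becomes $\seqmm{\Omega G}{}{\Omega P}{\Omega\pi}{\Omega M}$. To split off the $\Omega M$ factor it suffices to produce a right homotopy inverse for $\Omega\pi$: if $\Omega\pi$ has a section $\sigma\colon\seqm{\Omega M}{}{\Omega P}$, then the map $\seqmm{\Omega G\times\Omega M}{}{\Omega P\times\Omega P}{\mu}{\Omega P}$ built from the fibre inclusion and $\sigma$ is a homology isomorphism (using $H_*(\Omega P)\cong H_*(\Omega G)\otimes H_*(\Omega M)$, which follows from the looped fibration once $\Omega\pi$ has a section, since the looped Serre spectral sequence of $\seqmm{\Omega G}{}{\Omega P}{\Omega\pi}{\Omega M}$ then collapses), hence a homotopy equivalence, giving $\Omega P\simeq\Omega M\times\Omega G$.

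So the crux is to construct the section of $\Omega\pi$. Here I would use Theorem~\ref{T2}: the looped skeletal inclusion $\seqm{\Omega(\bigvee_{i=1}^{k}S^{2})}{\Omega i}{\Omega M}$ has a right homotopy inverse, say $\seqm{\Omega M}{w}{\Omega(\bigvee_{i=1}^{k}S^{2})}$. It therefore suffices to lift the composite $\seqmm{\bigvee_{i=1}^{k}S^{2}}{i}{M}$ — or rather its loop — through $\pi$, i.e. to find a map $\seqm{\bigvee_{i=1}^{k}S^{2}}{\ell}{P}$ with $\pi\circ\ell\simeq i$; then $\Omega\pi\circ\Omega\ell\circ w\simeq\Omega i\circ w\simeq\ID_{\Omega M}$ and $\Omega\ell\circ w$ is the desired section. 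The existence of such a lift $\ell$ follows because the obstructions to lifting $i$ through the fibration $\seqm{P}{\pi}{M}$ with fibre $G$ lie in $H^{j+1}(\bigvee_{i=1}^{k}S^{2};\pi_{j}(G))$; since $G$ is simply-connected and simple, $\pi_{1}(G)=\pi_{2}(G)=0$, so $\pi_{j}(G)=0$ for $j\leq 2$, and $\bigvee_{i=1}^{k}S^{2}$ is a $2$-complex, so all relevant obstruction groups vanish and the lift exists.

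I expect the main obstacle to be handling the homotopy-theoretic bookkeeping of the section cleanly: one must be slightly careful that the lift $\ell$ is built at the space level so that $\Omega\ell\circ w$ is genuinely a right homotopy inverse for $\Omega\pi$ (as opposed to merely inducing the right map on homotopy or homology), but since all maps involved are honest maps of spaces and $\Omega$ is a functor, this is automatic once $\pi\circ\ell\simeq i$. A secondary point requiring a line of justification is the collapse of the Serre spectral sequence for the looped fibration, which follows formally from the existence of a section of $\Omega\pi$ (the section forces every differential out of the base to vanish, and there is no extension problem since everything is a product of loops on spheres and hence torsion-free in the relevant range — or more simply, a section of a loop fibration splits it as a product up to homotopy by the standard argument that $\mu\circ(\text{incl}\times\sigma)$ is then a homology, hence homotopy, equivalence). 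Thus the proof reduces to the obstruction-theoretic lifting of the $2$-skeleton, which is immediate from $\pi_{1}(G)=\pi_{2}(G)=0$.
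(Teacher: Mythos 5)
Your proof is correct and follows essentially the same route as the paper: both arguments rest on Theorem~\ref{T2} together with the low connectivity of $G$ (equivalently, the $3$-connectivity of $BG$) to produce a right homotopy inverse of $\Omega\pi$, and then split $\Omega P\simeq\Omega M\times\Omega G$ by the standard loop-multiplication argument. The only cosmetic difference is that the paper shows the looped classifying map is null homotopic (from $g\circ i\simeq\ast$ and the right inverse of $\Omega i$, where $g\colon M\to BG$ classifies the bundle), whereas you lift $i$ through $\pi$ by obstruction theory; these are equivalent uses of the same connectivity fact.
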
 

\begin{proof} 
Any principle $G$-bundle 
$\seqm{P}{\pi}{M}$ 
is classified by a map 
$\seqm{M}{g}{BG}$, 
where $BG$ is the classifying space of $G$ and $P$ is the homotopy 
fibre of $g$. In our case, since $G$ is a simply-connected, 
compact simple Lie group, $BG$ is $2$-connected (in fact, it is $3$-connected). 
Thus the composite 
$\seqmm{\bigvee_{i=1}^{k} S^{2}}{i}{M}{g}{BG}$ 
is null homotopic by connectivity. By Theorem~\ref{T2}, $\Omega i$ 
has a right homotopy inverse. Therefore $\Omega g$ is null homotopic. 
Hence in the homotopy fibration sequence 
$\seqmmm{\Omega G}{}{\Omega P}{\Omega\pi}{\Omega M}{\Omega g}{G}$  
the null homotopy for $\Omega g$ implies that $\Omega\pi$ has a right 
homotopy inverse, and therefore $\Omega P\simeq\Omega M\times\Omega G$. 
\end{proof} 

Corollary~\ref{bundlecor} says something interesting. While there 
are $[M,BG]\cong\mathbb{Z}$ distinct principle $G$-bundles over $M$, 
after looping all those bundles become homotopy equivalent. 
Further, the decomposition of $\Omega P$ can be refined by 
inserting the decomposition of $\Omega M$ in Theorem~\ref{T1}, 
and - after localizing at a prime $p$ - by the decompositions of 
$\Omega G$ that arise from the $p$-local decompostions 
of~$G$ due to Mimura, Nishida and Toda~\cite{MNT}.

\section{Looped Configuration Spaces}
\label{sec:config} 

We end with a quick application that is in the spirit of our previous results. Let 
$$
F_k(X) = \{(x_1,\ldots,x_k)\in X^{\times k}\mathrel{|}x_i\neq x_j\mbox{ if }i\neq j\}
$$
be the \emph{ordered configuration space} of $k$ distinct points in $X$. 
The literature on these spaces is substantial, but many basic questions remain unanswered. 
For example, their integral homology is not clearly understood in most cases, 
and it is now known that their homotopy type generally does not depend only 
on the homotopy type of $X$,
even after restricting the input space to compact manifolds~\cite{LS}. 

Things do simplify after looping however. 
If we were to take $M$ to be a smooth manifold with a nonvanishing tangent vector field,
then the projection map \seqm{F_k(M)}{}{M} onto the first coordinate has a section.
By~\cite{FN,CG} there is a homotopy decomposition
\begin{equation}
\label{EConfigDecomp}
\Omega F_k(M)\simeq \Omega M\times\Omega(M-Q_1)\times\cdots\times\Omega (M-Q_k)               
\end{equation}
for any choice of distinct points $q_1,\ldots,q_k$ in $M$, with $Q_{i}=\{q_1,\ldots,q_i\}$. 
Thus, not only are the betti numbers $\Omega F_k(M)$ relatively easy to compute, 
but the homotopy type of $\Omega F_k(M)$ depends only on the homotopy type of the input manifold $M$ when $M$ is simply connected.
The following takes this a step further:

\begin{corollary}
Let $1<m\leq n-m$, $n$ be odd, and let $M$ be a closed oriented $(m-1)$-connected $n$-dimensional smooth manifold with torsion-free homology.
Then the homotopy type of the looped configuration space $\Omega F_k(M\#(S^m\times S^{n-m}))$ depends only on the homotopy type of $M-\ast$ for each $k\geq 1$.
\end{corollary}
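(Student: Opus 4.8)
The plan is to feed three ingredients into the configuration space splitting~(\ref{EConfigDecomp}): the computation of $\Omega M'$ provided by Theorem~\ref{connsum}, and a direct identification of the homotopy type of the punctured manifolds $M'-Q_i$. Throughout, write $M'=M\#(S^{m}\times S^{n-m})$, a closed oriented smooth $(m-1)$-connected $n$-manifold. Since $n$ is odd, Poincar\'e duality gives $\chi(M')=0$, so by the Poincar\'e--Hopf theorem $M'$ admits a nowhere-zero tangent vector field. Thus~(\ref{EConfigDecomp}) applies and yields a homotopy equivalence
$$
\Omega F_k(M')\simeq\Omega M'\times\Omega(M'-Q_1)\times\cdots\times\Omega(M'-Q_k),
$$
so it suffices to show that each factor on the right has homotopy type depending only on that of $M-\ast$.

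For the first factor, I would apply Theorem~\ref{connsum}(i) with $N=S^{m}\times S^{n-m}$: the hypotheses $1<m\leq n-m$, $H_{*}(M)$ torsion-free, and $H^{*}(N)\cong H^{*}(S^{m}\times S^{n-m})$ all hold, so
$$
\Omega M'\simeq\Omega(S^{m}\times S^{n-m})\times\Omega\bigl((M-\ast)\vee((M-\ast)\wedge\Omega(S^{m}\times S^{n-m}))\bigr),
$$
which depends only on the homotopy type of $M-\ast$.

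The remaining factors need an identification of $M'-Q_i$ up to homotopy. First, $M'-\ast$ is homotopy equivalent to the $(n-1)$-skeleton $\overline{M'}$, and since the $(n-1)$-skeleton of a connected sum is the wedge of the punctured summands, $\overline{M'}\simeq\overline M\vee(S^{m}\vee S^{n-m})\simeq(M-\ast)\vee S^{m}\vee S^{n-m}$; this settles $i=1$. For $i\geq 2$ I would isotope the $i$ punctures into the interior of the top cell of $M'=\overline{M'}\cup_{\alpha}D^{n}$, so that $M'-Q_i$ is the homotopy pushout of $\overline{M'}\xleftarrow{\alpha}S^{n-1}\to(D^{n}-\{i\text{ points}\})$. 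The right-hand space is homotopy equivalent to $\bigvee_{j=1}^{i}S^{n-1}$, and under this equivalence the boundary inclusion $S^{n-1}=\partial D^{n}\hookrightarrow D^{n}-\{i\text{ points}\}$ represents the class $(1,\ldots,1)\in\pi_{n-1}(\bigvee_{j=1}^{i}S^{n-1})\cong\mathbb{Z}^{i}$; note $n\geq 4$ here, so $n-1\geq 2$, this homotopy group is free abelian of rank $i$, and $\bigvee_{j=1}^{i}S^{n-1}$ admits self-homotopy-equivalences realizing all of $\mathrm{GL}_{i}(\mathbb{Z})$. Post-composing with such an equivalence sending $(1,\ldots,1)$ to the inclusion of the first wedge summand turns the pushout into $R\vee\bigvee_{j=1}^{i-1}S^{n-1}$, where $R$ is the pushout of $\overline{M'}\xleftarrow{\alpha}S^{n-1}\xrightarrow{\mathrm{id}}S^{n-1}$; since the second leg is an equivalence, $R\simeq\overline{M'}$ (so $\alpha$ plays no role), and hence $M'-Q_i\simeq(M-\ast)\vee S^{m}\vee S^{n-m}\vee\bigvee_{j=1}^{i-1}S^{n-1}$. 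Therefore $\Omega(M'-Q_i)$ also depends only on the homotopy type of $M-\ast$.

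Substituting the three ingredients into the displayed splitting of $\Omega F_k(M')$ gives the statement. The facts that odd-dimensional closed manifolds carry nowhere-zero vector fields, that the boundary sphere of a punctured disk represents the diagonal class in $\pi_{n-1}$ of a wedge of spheres, and that wedges of spheres admit self-equivalences realizing $\mathrm{GL}$ are all standard. The step I expect to require the most care is the identification of $M'-Q_i$ for $i\geq 2$, whose real content is that the homotopy type of the punctured manifold is insensitive to the attaching map $\alpha$ of the top cell --- which is exactly what the pushout manipulation makes visible.
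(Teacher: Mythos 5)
Your proposal is correct and follows essentially the same route as the paper: smoothness of the connected sum plus odd dimension (hence $\chi=0$) gives the nonvanishing vector field needed for the splitting~(\ref{EConfigDecomp}), Theorem~\ref{connsum} handles the factor $\Omega(M\#(S^m\times S^{n-m}))$, and the punctured factors are identified as $(M-\ast)\vee S^m\vee S^{n-m}\vee\bigvee_{j=1}^{i-1}S^{n-1}$. The only difference is that you spell out, via the homotopy pushout over the top cell and a $GL_i(\mathbb{Z})$ change of basis on $\bigvee S^{n-1}$, the identification of $M'-Q_i$ that the paper simply asserts as standard; that added detail is accurate.
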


\begin{proof}
Recall that the connected sum of smooth manifolds can be constructed so that the resulting manifold also has a smooth structure.
Then $M\#(S^m\times S^{n-m})$ is a smooth manifold, and moreover it is odd dimensional, so it has a nonvanishing tangent vector field.
Thus, the decomposition~(\ref{EConfigDecomp}) specializes to 
$$           
\Omega F_k(M\#(S^m\times S^{n-m}))\simeq\Omega(M\#(S^m\times S^{n-m}))\times\Omega(M\#(S^m\times S^{n-m})-Q_1)\times\cdots\times\Omega(M\#(S^m\times S^{n-m})-Q_k)
$$
for any choice of $k$ distinct points $q_1,\ldots,q_k$ in $M\#(S^m\times S^{n-m})$. 
 
Notice that $M\#(S^m\times S^{n-m})-Q_i$ is homotopy equivalent to the wedge sum of $(M-\ast)\vee S^m\vee S^{n-m}$ with $i-1$ copies of the $(n-1)$-sphere. 
Thus, the homotopy type of each factor $\Omega(M\#(S^m\times S^{n-m})-Q_i)$ in the  decomposition above depends only on the homotopy type of $M-\ast$.
Likewise, the homotopy type of the remaining factor $\Omega(M\#(S^m\times S^{n-m}))$ depends only on that of $M-\ast$ by Theorem~(\ref{connsum}).
The result follows. 
\end{proof}

\bibliographystyle{amsalpha}

\begin{thebibliography}{ZGS}
\bibitem[A1]{Anick1} D.J. Anick, \emph{Homotopy exponents for spaces of category two},
	 Algebraic topology ({A}rcata, {CA}, 1986), Lecture Notes in Math. No. \textbf{1370},
	 Springer, 1989.
\bibitem[A2]{Anick2} D.J. Anick, \emph{Single loop space decompositions},
	 Trans. Amer. Math. Soc., \textbf{334}, (1992), 929-940. 
\bibitem[BB]{BB} Sa. Basu and So. Basu, \emph{Homotopy groups and periodic geodesics 
   of closed $4$-manifolds}, arXiv:1303.3328. 
\bibitem[B]{B} H.-J. Baues, \emph{The homotopy category of simply-connected 
   $4$-manifolds}, London Math. Soc. Lecture Notes Series No. \textbf{297}, 
   Cambridge Univ. Press, Cambridge, 2003. 
\bibitem[BM]{BM} F. Bosio and L. Meersseman, \emph{Real quadrics in $\mathbb{C}^{n}$, 
   complex manifolds and convex polytopes}, Acta Math.
   \textbf{197} (2006), 53-127. 
\bibitem[CH]{CH} T.D. Cochran and N. Habegger, \emph{On the homotopy 
   theory of simply connected four manifolds}, Topology
   \textbf{29} (1990), 419-440. 
\bibitem[CG]{CG} F.R. Cohen and S. Gitler, \emph{On loop spaces of configuration 
   spaces}, Trans. Amer. Math. Soc. \textbf{354} (2002), 1705-1748. 
\bibitem[CMN]{CMN} F.R. Cohen, J.C. Moore, and J.A. Neisendorfer,
   Torsion in homotopy groups, \emph{Ann. of Math.}
   \textbf{109} (1979), 121-168. 
\bibitem[D]{D} S.K. Donaldson, \emph{An application of gauge theory to 
   four-dimensional topology}, J. Differential Geom. \textbf{18} 
   (1983), 279-315. 
\bibitem[DL]{DL} H. Duan and C. Liang, \emph{Circle bundles over 4-manifolds}, 
   Arch. Math. (Basel) \textbf{85} (2005), no. 3, 278-282. 
\bibitem[FN]{FN} E. Fadell and L. Neuwirth, \emph{Configuration spaces}, Math. Scand. 
  \textbf{10} 1962 111-118.
\bibitem[FS]{FS} R. Fintushel and R.J. Stern, \emph{Knots, links and $4$-manifolds}, 
   Invent. Math. \textbf{134} (1998), 363-400. 
\bibitem[F]{F} M. Freedman, \emph{The topology of four-dimensional manifolds}, 
   J. Differential Geom. \textbf{17} (1982), 357-453. 
\bibitem[GL]{GL} S. Gitler and S. L\'{o}pez de Medrano, \emph{Intersection of quadrics, 
   moment-angle manifolds and connected sums}, arXiv:0901.2580. 
\bibitem[GW]{GW} J. Grbi\'{c} and J. Wu, \emph{On the homotopy rigidity of the 
   functor $\Sigma\Omega$ on co-$H$-spaces}, preprint. 
\bibitem[J]{J} I.M. James, \emph{Reduced product spaces},
   Ann. of Math. \textbf{62} (1955), 170-197.
\bibitem[LS]{LS} R. Longoni and P. Salvatore, \emph{Configuration spaces are not homotopy 
   invariant}, Topology \textbf{44} (2005), no. 2, 375-380.
\bibitem[Mc]{Mc} D. McGavran, \emph{Adjacent connected sums and torus actions}, 
   Trans. Amer. Math. Soc. \textbf{251} (1979), 235-254. 
\bibitem[Mi]{Mi} J. Milnor, \emph{On simply-connected $4$-manifolds}, 1958 
   Symposium internacional de topologia algebraica International symposium on 
   algebraic topology pp. 122-128 Universidad Nacional Aut\'{o}noma 
   de M\'{e}xico and UNESCO, Mexico City. 
\bibitem[MNT]{MNT} M. Mimura, G. Nishida, and H. Toda, \emph{Mod-$p$
   decomposition of compact Lie groups}, Publ. RIMS, Kyoto Univ
   \textbf{13} (1977), 627-680. 
\bibitem[Mo]{Mo} J.C. Moore, \emph{The double suspension and $p$-primary components 
   of the homotopy groups of spheres}, Bol. Soc. Mat. Mexicana 
   \textbf{1} (1956), 28-37.
\bibitem[MW]{MW} C. A. McGibbon and C. W. Wilkerson, \emph{Loop spaces of finite complexes at large primes},
	 Proc. Amer. Math. Soc., \textbf{96}, (1986), 698-702.
\bibitem[P]{P} J. Park, \emph{Simply-connected symplectic $4$-manifolds with 
   $b_{2}^{+}=1$ and $c_{1}^{2}=2$}, Invent. Math. \textbf{159} 
   (2005), 657-667. 
\bibitem[Q]{Q} F. Quinn, \emph{Surgery on Poincar\'{e} and normal spaces}, 
   Bull. Amer. Math. Soc. \textbf{78} (1972), 262-267. 
\bibitem[S]{Stelzer1} M. Stelzer, \emph{Decompositions of certain loop spaces},
	 J. Pure Appl. Algebra, \textbf{185}, (2003), 289-304. 
\bibitem[SV]{SV} D. Sullivan and M. Vigu\'{e}-Poirrier, \emph{The homology theory of 
   the closed geodesic problem}, J. Differential Geometry \textbf{11} 
   (1976), 633-644. 
\bibitem[Wa]{Wa} C.T.C. Wall, \emph{On simply-connected $4$-manifolds}, 
   J. London Math. Soc. \textbf{39} (1964), 141-149. 
\bibitem[Wi]{Wi} E. Witten, \emph{Monopoles and four-manifolds, 
   Math. Res}. Lett. \textbf{1} (1994), 769-796. 
\bibitem[ZGS]{ZGS} X. Zhao, H. Gao and X. Su, \emph{Homotopy classification 
   of maps between simply-connected four manifolds}, J. Symbolic 
   Computation \textbf{39} (2005), 631-642. 
\end{thebibliography}

\end{document}